\documentclass{amsart}
\usepackage[utf8]{inputenc}
\usepackage{amssymb}
\usepackage{hyperref}
\usepackage[final]{showkeys} 

\input xy
\xyoption{all}

\theoremstyle{definition}
\newtheorem{mydef}{Definition}[section]
\newtheorem{lem}[mydef]{Lemma}
\newtheorem{lemma}[mydef]{Lemma}
\newtheorem{thm}[mydef]{Theorem}

\newtheorem{cor}[mydef]{Corollary}

\newtheorem{question}[mydef]{Question}
\newtheorem{hypothesis}[mydef]{Hypothesis}

\newtheorem{defin}[mydef]{Definition}
\newtheorem{example}[mydef]{Example}
\newtheorem{exams}[mydef]{Example}
\newtheorem{remark}[mydef]{Remark}
\newtheorem{rem}[mydef]{Remark}
\newtheorem{notation}[mydef]{Notation}
\newtheorem{fact}[mydef]{Fact}

\newcommand{\fct}[2]{{}^{#1}#2}



\newcommand\sqk{\operatorname{Sq}(\ck)}

\newcommand\op{\operatorname{op}}
\newcommand\id{\operatorname{id}}

\newcommand\Set{\operatorname{\bf Set}}

\newcommand\Ban{\operatorname{\bf Ban}}
\newcommand\Hilb{\operatorname{\bf Hilb}}

\newcommand\Gra{\operatorname{\bf Gra}}

\newcommand\colim{\operatorname{colim}}

\newcommand\ca{\mathcal {A}}

\newcommand\cd{\mathcal {D}}

\newcommand\ci{\mathcal {I}}

\newcommand\ck{\mathcal {K}}
\newcommand\cl{\mathcal {L}}

\newcommand\cs{\mathcal {S}}

\newcommand\cx{\mathcal {X}}

\newcommand{\ba}{\bar{a}}
\newcommand{\bb}{\bar{b}}

\newcommand{\bx}{\bar{x}}
\newcommand{\by}{\bar{y}}

\newcommand{\Kmhpp}[2]{{#1}^{#2\text{-lmh}}}
\newcommand{\Kmhp}[1]{\Kmhpp{\K}{#1}}
\newcommand{\Kmh}{\Kmhp{\kappa}}

\newcommand{\Knf}{\K_{\NF}}
\newcommand{\Knfnobf}{K_{\NF}}

\newcommand{\cknf}{\ck_{\NF}}
\newcommand{\leanf}{\leap{\Knf}}


\newcommand{\sea}{\mathfrak{C}}
\newcommand{\ran}[1]{\text{ran}(#1)}

\newcommand{\cf}[1]{\text{cf} (#1)}
\newcommand{\seq}[1]{\langle #1 \rangle}
\newcommand{\rest}{\upharpoonright}





\newcommand{\leap}[1]{\le_{#1}}

\newcommand{\geap}[1]{\ge_{#1}}

\newcommand{\lea}{\leap{\K}}

\newcommand{\gea}{\geap{\K}}


\def\lee{\preceq}



\newcommand{\K}{\mathbf{K}}


\newbox\noforkbox \newdimen\forklinewidth
\forklinewidth=0.3pt \setbox0\hbox{$\textstyle\smile$}
\setbox1\hbox to \wd0{\hfil\vrule width \forklinewidth depth-2pt
 height 10pt \hfil}
\wd1=0 cm \setbox\noforkbox\hbox{\lower 2pt\box1\lower
2pt\box0\relax}
\def\unionstick{\mathop{\copy\noforkbox}\limits}
\newcommand{\nf}{\unionstick}
\newcommand{\nfs}[4]{#2 \nf_{#1}^{#4} #3}

\def\1nf{\unionstick^{(1)}}

\def\2nf{\unionstick^{(2)}}
\def\3nf{\unionstick^{(3)}}

\def\nfm{\overline{\nf}}
\newcommand{\nfcl}[4]{#2 \overset{#4}{\underset{#1}{\overline{\nf}}} #3}

\newcommand{\tp}{\text{tp}}
\newcommand{\gtp}{\text{gtp}}

\newcommand{\gS}{\text{gS}}



\newcommand{\Ll}{\mathbb{L}}

\newcommand{\NF}{\operatorname{NF}}


\newcommand{\tleq}{\trianglelefteq}


\newcommand{\LS}{\text{LS}}







\newcommand{\acl}{\operatorname{acl}}



\title{Forking independence from the categorical point of view}
\date{\today \\
AMS 2010 Subject Classification: Primary 03C45. Secondary: 18C35, 03C48, 03C52, 03C55, 03C75, 03E55.}
\keywords{forking; accessible category; stability; $\mu$-abstract elementary class; effective unions}

\parindent 0pt
\parskip 5pt

\setcounter{tocdepth}{1}

\author[Lieberman]{Michael Lieberman}
\email{lieberman@math.muni.cz}
\urladdr{http://www.math.muni.cz/\textasciitilde lieberman/}
\address{Department of Mathematics and Statistics, Faculty of Science, Masaryk University, Brno, Czech Republic}

\author[Rosick\'y]{Ji\v r\'i Rosick\'y}
\email{rosicky@math.muni.cz}
\urladdr{http://www.math.muni.cz/\textasciitilde rosicky/}
\address{Department of Mathematics and Statistics, Faculty of Science, Masaryk University, Brno, Czech Republic}
\thanks{The first and second authors are supported by the Grant Agency of the Czech Republic under the grant P201/12/G028.}

\author[Vasey]{Sebastien Vasey}
\email{sebv@math.harvard.edu}
\urladdr{http://math.harvard.edu/\textasciitilde sebv/}
\address{Department of Mathematics \\ Harvard University \\ Cambridge, Massachusetts, USA}

\begin{document}

\begin{abstract}

  Forking is a central notion of model theory, generalizing linear independence in vector spaces and algebraic independence in fields. We develop the theory of forking in abstract, category-theoretic terms, for reasons both practical (we require a characterization suitable for work in $\mu$-abstract elementary classes, i.e.\ accessible categories with all morphisms monomorphisms) and expository (we hope, with this account, to make forking accessible---and useful---to a broader mathematical audience). In particular, we present an axiomatic definition of what we call a stable independence notion on a category and show that this is in fact a purely category-theoretic axiomatization of the properties of model-theoretic forking in a stable first-order theory.


\end{abstract}

\maketitle

\tableofcontents

\section{Introduction}

\subsection{Background}

Forking is a model-theoretic notion generalizing linear independence in vector spaces and algebraic independence in fields. A central notion of modern model theory, it was developed by Saharon Shelah \cite{shelahfobook} for classes of models axiomatized by a stable first-order theory\footnote{While there are generalizations of the theory of stable forking to certain unstable first-order classes, most notably those axiomatized by NIP and simple theories, the present paper focuses exclusively on the stable case.}. Recall that a first-order theory $T$ has the \emph{order property} if there exists a model $M$ of $T$, a formula $\phi (\bx; \by)$ and a sequence $\seq{\ba_i : i < \omega}$ of finite tuples in $M$ such that for $i, j < \omega$, $M \models \phi[\ba_i; \ba_j]$ if and only if $i < j$. A theory $T$ is \emph{stable} precisely when it does \emph{not} have the order property. For example, both the theory of vector spaces over $\mathbb{Q}$ and the theory of algebraically closed fields of characteristic zero are stable, but the theory of linear orders and that of graphs are not (a formula witnessing the order property inside the random graph, for example, is $\phi (x_1 x_2, y_1 y_2) := x_1 E y_2 \land \neg (x_2 E y_1)$).

Roughly, Shelah defines forking so that a type $p$ over a set $C$ (i.e.\ a set of formulas with parameters from $C$) \emph{does not fork} over a subset $A$ of $C$ if it is a ``generic'' extension of $p \rest A$. In particular, $p$ is essentially determined by $p \rest A$. Thus nonforking\footnote{It is somewhat unfortunate that the negation of forking, ``nonforking'' is the positive notion. Nevertheless, this terminology is now well established.} can be seen as a notion of free extension. There is another way to see nonforking: in his survey on stability theory, Makkai \cite[A.1]{makkai-survey} introduces the \emph{anchor symbol} $\nf$ and defines $\nfs{A}{B}{C}{}$ (when working inside a monster model $\sea$) to mean that for any finite tuples $\bb$ of elements from $B$, $\tp (\bb / A C)$ does not fork over $A$. This notation $\nfs{A}{B}{C}{}$, which can be read as ``$B$ is independent from $C$ over $A$,'' simplifies the statement of some of the properties of forking. For example, \emph{symmetry} can be written as ``$\nfs{A}{B}{C}{}$ if and only if $\nfs{A}{C}{B}{}$''.

Forking is especially well-behaved when the base set $A$ above is a model of the theory\footnote{To study forking over sets and not just models, one can consider a certain category of sufficiently algebraically closed sets (Example \ref{indep-examples}(\ref{fo-example})) in which the nonforking amalgams determine the behavior of forking over any set.}. In fact, to understand it category-theoretically, it is useful to consider the case in which \emph{all} the sets under consideration are models, specifically $\nfs{M_0}{M_1}{M_2}{M_3}$, with $M_0 \preceq M_\ell \preceq M_3$, $\ell = 1,2$. Here, we write $M_3$ for an ambient model inside which all the types are computed. One can also view such a quadruple as a commutative diagram of embeddings, also known as an {\it amalgam}.  In this sense, we may identify a nonforking notion over models with a particular choice of such amalgams. Note that consideration of nonforking amalgams plays an important role in model theory, as it leads to definitions such as that of an \emph{independent system of models} (see \cite[A.11]{makkai-survey} or \cite{sh87a, sh87b}). This is for example a key concept in both the statement and proof of Shelah's main gap theorem \cite{main-gap-ams}, a celebrated achievement of first-order classification theory.

\subsection{Main questions and earlier work}

The present paper seeks to answer the following questions:

\begin{enumerate}
\item What are the basic category-theoretic properties of nonforking amalgams in the category of models of a stable first-order theory? More precisely, we would like a list of properties that are:

  \begin{enumerate}
  \item Invariant under equivalence of categories, e.g.\ they should not depend on what underlying concrete functor we use to represent the category.
  \item Canonical: in any reasonable category, there should be at most one notion satisfying the properties of nonforking amalgamation.
  \end{enumerate}
\item In what other categories is there a notion satisfying these properties?   
\end{enumerate}

There has already been a substantial amount of work on more purely model-theoretic versions of these questions. Consider, in particular, Harnik and Harrington \cite{hh84}, which characterizes forking in a stable first-order theory by a list of four axioms on a relation of inclusion between types. These axioms are, however, not suitably category-theoretic, as they depend on seeing types as sets of formulas. Recently, Boney, Grossberg, Kolesnikov, and the third author \cite{bgkv-apal} characterized stable forking in the general framework of abstract elementary classes (AECs), encompassing first-order theories but also classes axiomatized by infinitary logics such as $\Ll_{\infty, \omega}$. The characterization of \cite{bgkv-apal} is phrased in terms of an anchor relation $\nf$, but still uses the underlying set representations of the objects in the category.

The second question was considered early on by Shelah, with partial results in homogeneous model theory \cite{sh3}, $\Ll_{\omega_1, \omega}$ \cite{sh87a, sh87b}, universal classes \cite{sh300-orig}, and AECs (e.g.\ in his two-volume book \cite{shelahaecbook, shelahaecbook2}). In fact, there is a growing body of literature on forking-like notions in AECs. We refer the reader to the survey of Boney and the third author \cite{bv-survey-bfo}, but let us mention in particular the work of Boney and Grossberg \cite{bg-apal}, which generalizes work of Makkai and Shelah \cite{makkaishelah} from $\Ll_{\kappa, \omega}$ ($\kappa$ a strongly compact cardinal) to AECs, and builds a global forking-like independence notion on a subclass of sufficiently saturated models of the AEC. Interestingly, the subclass is not itself an AEC, but is still closed under $\kappa$-directed unions (as opposed to arbitrary directed unions). This was one motivation for developing $\kappa$-AECs \cite{mu-aec-jpaa}. This is a very broad framework for model theory. In fact, it has a category-theoretic equivalent: per Fact \ref{mu-aec-acc}, a $\kappa$-AEC is exactly (up to equivalence of category) an accessible category whose morphisms are monomorphisms (see Makkai-Paré \cite{makkai-pare} or Adámek-Rosický \cite{adamek-rosicky} on accessible categories and their broader relationship to model theory).

\subsection{Main results}

It therefore seems natural to investigate forking in arbitrary accessible categories (perhaps with all morphisms monomorphisms). The present paper makes the following contributions:

\begin{itemize}
\item We define when a category has what we call a \emph{stable independence notion} (Definition \ref{stable-def}). Roughly, this is a class of distinguished squares that satisfies, in particular, an existence property, a uniqueness property (a weakening of the definition of a pushout), as well as a transitivity property (corresponding to being closed under composition in a double categorical sense). The transitivity property makes the class of independent squares into a category, and we require that this category be \emph{accessible}.
\item We show that this is the desired purely category-theoretic axiomatization of forking: in a $\mu$-AEC $\K$, being stable (and, specifically, the accessibility of the category of independent squares) corresponds to having certain local character properties well known to model theorists (Theorem \ref{accessible-charact}). Moreover this axiomatization is canonical, assuming that the class has chain bounds (that is, any increasing chain of models has an upper bound, see Definition \ref{directed-def}).  This result, Theorem \ref{canon-thm}, generalizes \cite{bgkv-apal} to $\mu$-AECs.
\item Working purely abstractly, we exhibit a connection between forking and {\it effective unions} (an exactness property introduced by Barr \cite{effective-unions}). Specifically, we show that, if we start with a locally presentable and coregular category $\ck$ that has effective unions then $\ck_{reg}$, the subcategory of $\ck$ containing just the regular monomorphisms of $\ck$, has a stable independence notion, see Theorem \ref{indep}. In particular, this covers both Grothendieck toposes and Grothendieck abelian categories. That forking occurs in these contexts seems not to have been recognized before (although it has long been known that forking occurs in classes axiomatized by first-order theories of modules, see \cite{prest}).
\item Assuming a large cardinal axiom, we characterize precisely when a stable independence notion exists in any $\mu$-AEC with chain  bounds (and hence in any accessible category with chain bounds whose morphisms are monomorphisms). This is Corollary \ref{indep-build-cor}: such a $\mu$-AEC has a cofinal subclass with a stable independence notion if and only if it does not have a certain order property. This implies that the usual ``syntactic'' definition of stability (note that the usual definition in terms of counting types is too weak in this context, see Example \ref{stability-op-example}) is equivalent to a purely category-theoretic statement. Thus model-theoretic stability is invariant under equivalence of category\footnote{For the category of models of a first-order theory, this can also be seen using Shelah's saturation spectrum theorem \cite[VIII.4.7]{shelahfobook}, see also \cite{rosicky-sat-jsl}. However no such saturation spectrum theorem is known in arbitrary $\mu$-AECs.}. As a philosophical remark, Shelah \cite[p.~23]{shelahaecbook} argues that classification-theoretic dividing lines should have both an ``internal'' and an ``external'' characterization. If we interpret ``external'' as ``invariant under equivalence of category'' and ``internal'' as ``a property satisfied by a fixed model in the class'', then we see here this principle in action and obtain evidence that there is a ``stability-like'' dividing line in the general framework of accessible categories.
\end{itemize}

\subsection{Notes}

In a sense, this paper falls naturally into two parts: after a brief review of some of the basic concepts that will be used (Section \ref{prelim-sec}), we devote Sections \ref{stable-indep-categ} through \ref{coreg-sec} to the development of an analogue of stable (or nonforking) independence suited to a general category and, working purely abstractly, give conditions on a category under which such a relation is (a) guaranteed to exist, and (b) to take a particular recognizable form, e.g. the independent squares are precisely the pullback squares. This first half is intended to be congenial to a broad mathematical audience, and may be of particular use to those who have previously been reluctant to wade into the details of model-theoretic nonforking.

In the second half, we shift our attention to $\mu$-AECs, which, being concrete, come with a great deal more machinery, and thus allow the development of a richer theory---the results and proofs here are more recognizably model-theoretic, and significantly more technical.  We note, however, that $\mu$-AECs are accessible categories with all morphisms monomorphisms, and vice versa---so, in fact, we are working in a vastly more general context than the coregular locally presentable categories of, say, Section \ref{effect-sec}. Section \ref{categ-mu-aec} gives some tools to move from an arbitrary category to a $\mu$-AEC by restricting its class of morphisms: this provides a bridge between the paper's two halves. In Section \ref{mt-mu-aec}, we give some basic model-theoretic tools for use in $\mu$-AECs. In section \ref{stable-mu-aec}, we consider what the properties of stable independence look like in a $\mu$-AEC and show that they are equivalent to the more model-theoretic local character properties of forking in a stable first-order theory. In Section \ref{canon-sec}, we prove that stable independence notions are canonical, symmetric, and imply failure of a certain order property. In Section \ref{noop-sec}, we reverse this and show (assuming a large cardinal axiom) that failure of an order property implies there is a stable independence notion on a subclass of saturated models.

Throughout this paper, we assume the reader is familiar with basic category theory as presented e.g.\ in \cite{joy-of-cats}. More particularly, we will spend much of our time in accessible, locally presentable, or locally multipresentable categories (see \cite{adamek-rosicky} for further details).  For connections between locally multipresentable categories---and locally polypresentable categories, which also make a brief appearance---and abstract model theory, we point readers to \cite{multipres-pams}.  We also assume some familiarity with $\mu$-AECs and their relationship with accessible categories \cite{mu-aec-jpaa}.

We use the following notational conventions: we write $K$ for a class of $\tau$-structures and $\K$ (boldface) for a pair $(K, \lea)$, where $\lea$ is a partial order. We write $\ck$ (script) for a category. We will abuse notation and write $M \in \K$ instead of $M \in K$. For a structure $M$, we write $U M$ for its universe, and $|U M|$ for the cardinality of its universe. We write $M \subseteq N$ to mean that $M$ is a substructure of $N$. For $\alpha$ an ordinal, we let $\fct{<\alpha}{A}$ (respectively, $\fct{\alpha}{A}$) denote the set of sequences of length less than (respectively, equal to) $\alpha$ with elements from the set $A$. We will abuse this notation slightly, writing $\fct{<\alpha}{M}$ in place of $\fct{<\alpha}{U M}$.

\subsection{Acknowledgments}

We thank Marcos Mazari Armida for very detailed feedback which greatly improved the paper. We also would like to thank the referee for helpful comments.

\section{Preliminaries}\label{prelim-sec}

Intuitively, an accessible category is a category with all  sufficiently directed colimits and such that every object can be written as a highly directed colimit of ``small'' objects.  Here ``small'' is interpreted in terms of {\em presentability}, a notion of size that makes sense in an arbitrary (potentially non-concrete) category. It is important to note that this notion, which may well appear unfamiliar, restricts to precisely what it should in familiar cases: in the category of sets, a set is $\lambda$-presentable if and only if its cardinality is less than $\lambda$; in an AEC $\K$, the same is true for all $\lambda>\LS(\K)$. 

\begin{defin}\label{acc-def}
  Let $\ck$ be a category and let $\lambda$ be a regular cardinal.

  \begin{enumerate}
  \item An object $M$ is \emph{$\lambda$-presentable} if its hom-functor $\ck(M,-):\ck\to\Set$ preserves $\lambda$-directed colimits. Put another way, $M$ is $\lambda$-presentable if for any morphism $f:M\to N$ with $N$ a $\lambda$-directed colimit $\langle \phi_\alpha:N_\alpha\to N\rangle$, $f$ factors essentially uniquely through one of the $N_\alpha$, i.e.\ $f=\phi_\alpha f_\alpha$ for some $f_\alpha:M\to N_\alpha$.
  \item $\ck$ is \emph{$\lambda$-accessible} if it has $\lambda$-directed colimits and $\ck$ contains a set $S$ of $\lambda$-presentable objects such that every object of $\ck$ is a $\lambda$-directed colimit of objects in $S$.
  \item $\ck$ is \emph{accessible} if it is $\lambda'$-accessible for some regular cardinal $\lambda'$.
  \end{enumerate}
\end{defin}

We will often quote results on accessible categories from \cite{adamek-rosicky}.

Recall from \cite[\S2]{mu-aec-jpaa} that a \emph{($\mu$-ary) abstract class} is a pair $\K = (K, \le)$ such that $K$ is a class of structures in a fixed $\mu$-ary vocabulary $\tau = \tau (\K)$, and $\le$ is a partial order on $K$ that refines the $\tau$-substructure relation, with $K$ and $\le$ closed under $\tau$-isomorphism. We say that such a $\K$ is \emph{coherent} if $M_0 \subseteq M_1 \lea M_2$ and $M_0 \lea M_2$ implies $M_0 \lea M_1$.

In any abstract class $\K$, there is a natural notion of morphism: we say that $f: M \rightarrow N$ is a \emph{$\K$-embedding} if $f$ is an isomorphism from $M$ onto $f[M]$ and $f[M] \lea N$. We can see an abstract class and its $\K$-embeddings as a category. In fact (see \cite[\S2]{mu-aec-jpaa}), an abstract class is a replete and iso-full subcategory of the category of $\tau$-structures with injective homomorphisms (insisting, as is customary in model theory, that relation symbols be reflected as well as preserved). Thus we also think of $\K$ as a (concrete) category. Saying that $\K$ has \emph{concrete $\mu$-directed colimits} amounts to saying that for any $\mu$-directed system in $\K$, the union of the system is its colimit. That is, $\K$ satisfies the chain axioms of $\mu$-AECs. In fact, let us now recall the definition of a $\mu$-AEC from \cite[2.2]{mu-aec-jpaa}:

\begin{defin}
  Let $\mu$ be a regular cardinal. An abstract class $\K$ is a \emph{$\mu$-abstract elementary class} (or \emph{$\mu$-AEC} for short) if it satisfies the following three axioms:

  \begin{enumerate}
  \item Coherence: for any $M_0, M_1, M_2 \in \K$, if $M_0 \subseteq M_1 \lea M_2$ and $M_0 \lea M_2$, then $M_0 \lea M_1$.
  \item Chain axioms: if $\seq{M_i : i \in I}$ is a $\mu$-directed system in $\K$, then:
    \begin{enumerate}
    \item $M := \bigcup_{i \in I} M_i$ is in $\K$.
    \item $M_i \lea M$ for all $i \in I$.
    \item If $M_i \lea N$ for all $i \in I$, then $M \lea N$.
    \end{enumerate}
  \item Löwenheim-Skolem-Tarski (LST) axiom: there exists a cardinal $\lambda = \lambda^{<\mu} \ge |\tau (\K)| + \mu$ such that for any $M \in \K$ and any $A \subseteq U M$, there exists $M_0 \in \K$ with $M_0 \lea M$, $A \subseteq U M_0$, and $|U M_0| \le |A|^{<\mu} + \lambda$. We write $\LS (\K)$ for the least such $\lambda$.
  \end{enumerate}
\end{defin}

Note that when $\mu = \aleph_0$, we recover Shelah's definition of an AEC from \cite{sh88}. The connection between $\mu$-AECs and accessible categories is given by \cite[\S4]{mu-aec-jpaa}:

\begin{fact}\label{mu-aec-acc}
  If $\K$ is a $\mu$-AEC, then it is an $\LS (\K)^+$-accessible category with all $\mu$-directed colimits whose morphisms are monomorphisms. Conversely, any $\mu$-accessible category whose morphisms are monomorphisms is equivalent to a $\mu$-AEC.
\end{fact}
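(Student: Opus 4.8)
The plan is to prove the two implications separately, handling the forward direction by exhibiting explicit presentable generators and the converse --- which I expect to carry essentially all the difficulty --- by equipping $\ck$ with a canonical $\mu$-ary structure on its hom-sets. For the forward direction, let $\K = (K,\lea)$ be a $\mu$-AEC. Its $\K$-embeddings are injective homomorphisms, hence monomorphisms. First I would check that $\mu$-directed colimits exist and are concrete: any $\mu$-directed diagram of $\K$-embeddings can be replaced, up to isomorphism, by a $\mu$-directed system of inclusions, and then the chain axioms do exactly the required work --- (2a) puts the union in $K$, (2b) makes the legs $\K$-embeddings, and (2c) gives the universal property --- so the colimit is the directed union; since $\LS(\K) \ge \mu$, this covers $\LS(\K)^+$-directed colimits in particular. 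For the generators I would take $S$ to be a set of representatives of the members of $K$ of cardinality $\le \LS(\K)$ (a set, as there are boundedly many $\tau$-structures of that size up to isomorphism). Using the LST axiom together with $\LS(\K) = \LS(\K)^{<\mu}$, the $\le \LS(\K)$-sized submodels of any $M$ form an $\LS(\K)^+$-directed system whose union is $M$, so every object is an $\LS(\K)^+$-directed colimit of objects of $S$. It remains to see that each $M_0 \in S$ is $\LS(\K)^+$-presentable: given $f\colon M_0 \to N$ with $N = \colim_\alpha N_\alpha$ an $\LS(\K)^+$-directed colimit, the image $f[M_0]$ has size $\le \LS(\K)$ and so lands inside a single $N_\alpha$ by $\LS(\K)^+$-directedness; coherence then upgrades this to a factorization of $f$ through $N_\alpha$ by a $\K$-embedding, and this factorization is unique because every leg $N_\alpha \to N$ is monic. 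This yields $\LS(\K)^+$-accessibility.

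For the converse, let $\ck$ be $\mu$-accessible with all morphisms monic, and let $\ca$ be a skeleton of the essentially small full subcategory of $\mu$-presentable objects, so that $\ca$ is dense and every object is a $\mu$-directed colimit of objects of $\ca$. I would introduce a $\mu$-ary vocabulary $\tau$ consisting of a unary predicate $P_A$ for each $A \in \ca$, a unary function symbol for each morphism of $\ca$ (to be read as precomposition), and $\mu$-ary relation symbols recording, for each $\mu$-small colimit cocone in $\ca$, the gluing of a compatible family of fewer than $\mu$ arrows into a single arrow --- it is precisely this gluing data that forces the vocabulary to be $\mu$-ary. To an object $M$ one then assigns $UM = \coprod_{A \in \ca} \ck(A,M)$, with $P_A$ naming $\ck(A,M)$ and the operations and relations interpreted as above, and to a morphism $f$ the postcomposition map $Uf$. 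Faithfulness and fullness of $U$ follow from density of $\ca$: a structure-homomorphism $UM \to UN$ is the same as a natural transformation between the restricted hom-functors, hence a unique $\ck$-morphism. Crucially, $Uf$ is injective exactly because $f$ is monic. Thus $U$ should be a full embedding onto a replete, iso-full subcategory $\K$ of $\tau$-structures with injective homomorphisms, and $\ck \simeq \K$.

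Finally I would verify that $\K$ is a $\mu$-AEC. Since each $\ck(A,-)$ preserves $\mu$-directed colimits and these commute with the set-indexed coproduct, $U$ preserves $\mu$-directed colimits, which gives the chain axioms; coherence follows from all morphisms being monic; and the LST axiom comes from $\mu$-accessibility, taking $\LS(\K)$ to be the supremum of $|UA|$ over $A \in \ca$ and using that $\mu$-ary closure inflates cardinality only by $(-)^{<\mu}$. The hard part will be the two places where the construction must be exactly right: showing that $U$ is full, so that every structure-homomorphism between the $UM$ genuinely arises from a $\ck$-morphism (this is what the relations encoding colimit cocones, together with density, are for), and showing that the image morphisms are $\K$-embeddings rather than mere injective homomorphisms --- the step that consumes the hypothesis that all morphisms of $\ck$ are monomorphisms, and where the reflection of the $\mu$-ary relations needs to be checked with care.
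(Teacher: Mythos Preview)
The paper does not give a proof of this statement: it is labeled as a \emph{Fact} and cited from \cite[\S4]{mu-aec-jpaa}, so there is no proof in the present paper to compare your proposal against.

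That said, your outline is essentially the argument of the cited reference and is correct in its main lines. A few remarks on the converse direction, where you have (rightly) located the work. The $\mu$-ary relation symbols encoding colimit cocones are not needed for fullness: once you have the unary predicates $P_A$ and the unary operations for precomposition by morphisms of $\ca$, a structure homomorphism $UM \to UN$ is already exactly a natural transformation $\ck(-,M)|_{\ca} \Rightarrow \ck(-,N)|_{\ca}$, and density of $\ca$ (which you correctly invoke) then produces a unique $f:M\to N$ with $Uf$ equal to it. The extra relations are harmless --- reflection holds because $f$ is monic --- but they add nothing and can be dropped, which also simplifies the check that image morphisms are $\K$-embeddings. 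For that check: reflection of the $P_A$ is automatic, since every element of $UM$ lies in exactly one $P_A$ and postcomposition preserves domains; so $Uf$ is a genuine relation-reflecting injective homomorphism precisely because $f$ is monic, and this is indeed where the hypothesis on $\ck$ is consumed. Finally, your proposed $\LS(\K)$ should also dominate $|\Mor(\ca)|$, since that is (at least) the size of $\tau$.
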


We end this section by briefly recalling that we can define a notion of Galois (orbital) type in any abstract class. This notion of type was introduced by Shelah for AECs but we use the notation and definitions from \cite[\S2]{sv-infinitary-stability-afml}. Loosely speaking, it is defined as the finest notion of type preserving $\K$-embeddings.  More precisely, the Galois type of a sequence $\ba$ over a structure $M\in\K$ computed in an extension $N\gea M$, denoted $\gtp (\ba / M; N)$, is the equivalence class of the triple $(M,\ba,N)$ under the (transitive closure of) the relation $\sim$, where $(M,\ba,N)\sim (M,\ba',N')$ if there is an amalgam 

$$  
      \xymatrix@=3pc{
        N \ar@{.>}[r]^{g} & \bar{N} \\
        M \ar [u] \ar [r] &
        N' \ar@{.>}[u]_{g'}
      }
      $$ 
with $g(\ba)=g'(\ba')$ (see, e.g., \cite[2.16]{sv-infinitary-stability-afml}).  We denote by $\gS^{<\infty} (M)$ the set of Galois types of any length over $M$; that is,

$$
\gS^{<\infty} (M) := \{\gtp (\ba / M; N) \mid M \lea N, \ba \in \fct{<\infty}{N}\}.
$$

We will also use variations such as $\gS^{\alpha} (M)$ (the length is restricted to be $\alpha$) or $\gS^{<\infty} (B; N)$ (the base set is $B$ and we only look at types of elements inside $N$).

\section{Stable independence in an arbitrary category}\label{stable-indep-categ}

Throughout this section, we assume:

\begin{hypothesis}
  We work inside a fixed category $\ck$.
\end{hypothesis}

The goal of this section is to axiomatize stable amalgams as a particular category of commutative squares in $\ck$ (see Definition \ref{k-nf-def}). In fact, although we will not stress this perspective, we will identify this notion with a double subcategory of $\sqk$, the usual double category of commutative squares in $\ck$.

\begin{defin}\label{sim-def}
  Let $(f_1 : M_0 \rightarrow M_1, f_2 : M_0 \rightarrow M_2)$ be a span in $\ck$.

  \begin{enumerate}
    \item An \emph{amalgam} of $(f_1, f_2)$ is a cospan $(g_1 : M_1 \rightarrow N, g_2 : M_2 \rightarrow N)$ such that the following diagram commutes:

      $$  
      \xymatrix@=3pc{
        M_1 \ar@{.>}[r]^{g_1} & N \\
        M_0 \ar [u]^{f_1} \ar [r]_{f_2} &
        M_2 \ar@{.>}[u]_{g_2}
      }
      $$

      An \emph{amalgamation diagram} is a quadruple $(f_1, f_2, g_1, g_2)$ such that $(f_1, f_2)$ is a cospan and $(g_1, g_2)$ is an amalgam thereof.
    \item We say that $\ck$ has the \emph{amalgamation property} if every span has an amalgam.
    \item Two amalgams $g_1^a: M_1 \rightarrow N^a, g_2^a : M_2 \rightarrow N^a$, $g_1^b : M_1 \rightarrow N^b, g_2^b : M_2 \rightarrow N^b$ of $(f_1, f_2)$ are \emph{equivalent} (written $(f_1, f_2, g_1^a, g_2^a) \sim^\ast (f_1, f_2, g_1^b, g_2^b)$ if there exists $N$ and $g^a$, $g^b$ making the following diagram commute:

        \[
        \xymatrix{ & N^b \ar@{.>}[r]^{g^b} & N_\ast \\
    M_1 \ar[ru]^{g_1^b} \ar[rr]|>>>>>{g_1^a} & & N^a \ar@{.>}[u]_{g^a} \\
    M_0 \ar[u]^{f_1} \ar[r]_{f_2} & M_2 \ar[uu]|>>>>>{g_2^b}  \ar[ur]_{g_2^a} & \\
  }
        \]
      \item Let $\sim$ be the transitive closure of $\sim^\ast$.
  \end{enumerate}
\end{defin}

\begin{remark}
  If $\ck$ has the amalgamation property, then $\sim^\ast$ is already transitive \cite[4.3]{jrsh875}.
\end{remark}

The idea of $\sim$ is to identify amalgams whose underlying spans look the same. From this perspective, an independence relation simply consists in the specification of a particular way to amalgamate each span.

\begin{defin}
  An \emph{independence relation (on $\ck$)} is a set $\nf$ of amalgamation diagrams that is closed under $\sim$. We write $\nf (f_1, f_2, g_1, g_2)$ instead of $(f_1, f_2, g_1, g_2) \in \nf$. If an amalgamation diagram $(f_1, f_2, g_1, g_2)$ is in $\nf$, we call it an \emph{$\nf$-independent diagram} (or just an \emph{independent diagram} when $\nf$ is clear from context).
\end{defin}
\begin{remark}
  We will use the terms \emph{independence relation} and \emph{independence notion} completely interchangeably.
\end{remark}

\begin{notation}\label{stablediag}
	We note a certain utility, too, in a diagrammatic representation analogous to that for pullbacks and pushouts; that is, one may wish to represent the assertion $\nf (f_1, f_2, g_1, g_2)$ by the annotated diagram
	$$  
      \xymatrix@=3pc{
        M_1 \ar@{}[dr]|{\nf}\ar[r]^{g_1} & N \\
        M_0 \ar [u]^{f_1} \ar [r]_{f_2} &
        M_2 \ar[u]_{g_2}
      }
      $$
    This points to the fact that, in axiomatizing abstract independence, we will be delineating the properties of a (double) category of such squares---this is made precise in Definition~\ref{k-nf-def} below.
\end{notation}

When we are working in a concrete class and the morphisms are simply inclusions of strong substructures, we will employ the obvious notational shortcut:

\begin{notation}
  For an independence relation $\nf$ on an abstract class $\K$, we write $\nfs{M_0}{M_1}{M_2}{M_3}$ (or $\nf (M_0, M_1, M_2, M_3)$) if $M_0 \lea M_\ell \lea M_3$ for $\ell = 1,2$ and $\nf (i_{0, 1}, i_{0, 2}, i_{1, 3}, i_{2, 3})$, where $i_{l, k}$ is the $\K$-embedding from 
  $M_l$ to $M_k$.
\end{notation}

\begin{defin}\label{invariant-def}
  An independence relation $\nf$ is \emph{invariant} if it is invariant under isomorphisms of amalgamation diagrams (in the expected sense). 
\end{defin}

One can always switch the left and right hand side of $\nf$ and obtain another independence relation. Thus it is natural to define: 

\begin{defin}
  For $\nf$ an independence relation on $\ck$, we define the \emph{dual} of $\nf$, denoted $\nf^d$ by $\nf^d (f_1, f_2, g_1, g_2)$ if and only if $\nf (f_2, f_1, g_2, g_1)$. We say that $\nf$ is \emph{symmetric} if $\nf = \nf^d$.
\end{defin}

The following property is a strengthening of the amalgamation property: it asks that any span have an \emph{independent} amalgam.

\begin{defin}
  We say that $\nf$ has the \emph{existence property} (or just \emph{has existence}) if for any cospan $(f_1, f_2)$, there is an amalgam $(g_1, g_2)$ such that $\nf (f_1, f_2, g_1, g_2)$ (so in particular, $\ck$ has the amalgamation property).
\end{defin}
\begin{remark}\label{existence-dual}
  If $\nf$ has the existence property, then $\nf^d$ has the existence property.
\end{remark}

The existence property implies that the base is independent over itself. More generally:

\begin{lem}\label{fd-rmk}
  Assume that $\nf$ has the existence property. Consider the commutative diagram

  $$  
  \xymatrix@=3pc{
    M_1 \ar[r]^{g_1} & M_3 \\
    M_0 \ar [u]^{f_1} \ar [r]_{f_2} &
    M_2 \ar[u]_{g_2}
  }
  $$

  If either $f_1$ or $f_2$ is an isomorphism, then the diagram is independent.
\end{lem}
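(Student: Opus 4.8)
The plan is to leverage two features of the setup: that $\nf$ is closed under the equivalence relation $\sim$, and that the existence property guarantees that \emph{some} amalgam of the span $(f_1, f_2)$ lies in $\nf$. The strategy is then to show that, when $f_1$ is an isomorphism, \emph{every} amalgam of $(f_1, f_2)$ falls into a single $\sim$-class; combined with existence and closure of $\nf$ under $\sim$, this forces all of them—in particular the given one—into $\nf$. I first reduce to the case that $f_1$ is an isomorphism. Granting the statement whenever the left vertical leg is an isomorphism, suppose instead that $f_2$ is an isomorphism. Consider the swapped amalgamation diagram $(f_2, f_1, g_2, g_1)$, which commutes by the same relation $g_2 f_2 = g_1 f_1$ and whose left leg $f_2$ is now an isomorphism. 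Applying the established case to the dual relation $\nf^d$---which has existence by Remark \ref{existence-dual}---yields $\nf^d(f_2, f_1, g_2, g_1)$, and unwinding the definition of the dual gives $\nf(f_1, f_2, g_1, g_2)$.

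Now assume $f_1$ is an isomorphism. The key observation is that commutativity of the square reads $g_1 f_1 = g_2 f_2$, so invertibility of $f_1$ forces $g_1 = g_2 f_2 f_1^{-1}$; thus $g_1$ is completely determined by $g_2$. I will introduce the ``trivial'' amalgam $(g_1^0, g_2^0) := (f_2 f_1^{-1}, \id_{M_2})$ of $(f_1, f_2)$ into $M_2$, and check it is an amalgam (the identity $g_1^0 f_1 = f_2 = g_2^0 f_2$ is immediate). I then claim that the given amalgam $(g_1, g_2)$ into $M_3$ is $\sim^\ast$-equivalent to this trivial one. The witnessing cocone of Definition \ref{sim-def} is obtained by taking $N_\ast = M_3$, $g^a = \id_{M_3}$, and $g^b = g_2 : M_2 \to M_3$; the two required identities $g_2 g_1^0 = g_1$ and $g_2 g_2^0 = g_2$ then follow, the first from $g_2 f_2 f_1^{-1} = g_1 f_1 f_1^{-1} = g_1$ and the second trivially.

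With this in hand the argument closes quickly. By the existence property there is an amalgam $(\bar{g}_1, \bar{g}_2)$ with $\nf(f_1, f_2, \bar{g}_1, \bar{g}_2)$; running the identical computation shows $(\bar{g}_1, \bar{g}_2) \sim^\ast (g_1^0, g_2^0)$ as well. Since $\sim$ is the transitive closure of the (reflexive and symmetric) relation $\sim^\ast$, it is an equivalence relation, so $(g_1, g_2) \sim (g_1^0, g_2^0) \sim (\bar{g}_1, \bar{g}_2)$ gives $(g_1, g_2) \sim (\bar{g}_1, \bar{g}_2)$. Closure of $\nf$ under $\sim$ then yields $\nf(f_1, f_2, g_1, g_2)$, as desired.

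I expect the main (minor) obstacle to be conceptual rather than computational: one must resist trying to merge the given amalgam $M_3$ and the independent amalgam directly---two amalgams of a span need \emph{not} be equivalent in general---and instead recognize that the hypothesis that $f_1$ is an isomorphism is exactly what collapses all amalgams of $(f_1, f_2)$ into one $\sim$-class, by routing each through the canonical amalgam into $M_2$. The only point requiring care is the bookkeeping in the $\sim^\ast$-diagram, i.e.\ matching $g^a$ and $g^b$ to the correct legs, which the explicit choice $N_\ast = M_3$, $g^a = \id_{M_3}$, $g^b = g_2$ resolves.
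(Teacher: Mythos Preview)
Your proof is correct and follows essentially the same strategy as the paper's: reduce to the case where $f_1$ is an isomorphism via the dual, then use existence plus closure under $\sim$ after observing that all amalgams of $(f_1,f_2)$ fall into a single $\sim$-class. The only cosmetic difference is that the paper shows $(g_1,g_2)\sim(h_1,h_2)$ directly by amalgamating $g_2$ and $h_2$ over $M_2$, whereas you route both through the canonical trivial amalgam $(f_2 f_1^{-1},\id_{M_2})$; your version is slightly more explicit and avoids invoking the amalgamation property as an extra step.
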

\begin{proof}
  In light of \ref{existence-dual}, it suffices to prove the result in case $f_1$ is an isomorphism.
  
  Assume that $f_1$ is an isomorphism and apply existence to the cospan $(f_1, f_2)$. We obtain an amalgam $(h_1: M_0 \rightarrow M_3', h_2: M_2 \rightarrow M_3')$ such that $\nf (f_1, f_2, h_1, h_2)$. Now because $f_1$ is an isomorphism, $(g_1, g_2)$ is equivalent to $(h_1, h_{2})$. Indeed, any amalgam of $(g_2, h_2)$ will witness the equivalence. Since $\nf$ is (by definition of an independence relation) closed under $\sim$, we also have that $\nf (f_1, f_2, g_1, g_2)$.
\end{proof}

The statement of the uniqueness property, below, may seem unusual. The idea is that we want every span to have an independent amalgam which is unique, not up to isomorphism (as we always want e.g.\ to be able to grow the ambient model) but up to equivalence of amalgams:

\begin{defin}
  We say that $\nf$ has the \emph{uniqueness property} if whenever $\nf (f_0, f_1, g_1, g_2)$ and $\nf (f_0, f_1, g_1', g_2')$, we have that $(f_0, f_1, g_1, g_2) \sim (f_0, f_1, g_1', g_2')$.
\end{defin}
\begin{remark}
  If $\nf$ has the uniqueness property, then $\nf^d$ has the uniqueness property.
\end{remark}

In order to coherently compose independent squares, the following property is key:

\begin{defin}
  Let $\nf$ be an independence relation. We say that $\nf$ is \emph{right transitive} if whenever we have:

  $$  
  \xymatrix@=3pc{
    M_1 \ar[r]^{f_{1,3}} & M_3 \ar[r]^{f_{3,5}}  & M_5 \\
    M_0 \ar [u]^{f_{0,1}} \ar [r]_{f_{0,2}} & M_2 \ar[u]_{f_{2,3}} \ar[r]_{f_{2, 4}} & M_4 \ar[u]_{f_{4,5}}
  }
  $$

  with $\nf(f_{0, 1}, f_{0, 2}, f_{1, 3}, f_{2, 3})$ and $\nf(f_{2,3}, f_{2,4 }, f_{3, 5}, f_{4, 5})$, then it is also the case that $\nf(f_{0,1},  f_{2, 4}\circ f_{0, 2}, f_{3, 5} \circ f_{1, 3}, f_{4, 5})$.

  We say that $\nf$ is \emph{left transitive} if $\nf^d$ is right-transitive. We say that $\nf$ is \emph{transitive} if it is both left and right transitive.
\end{defin}

Diagrammatically, right transitivity means precisely that the situation in the leftmost diagram below implies that on the right:
$$  
  \xymatrix@=3pc{
    M_1 \ar[r]^{f_{1,3}}\ar@{}[dr]|{\nf} & M_3\ar@{}[dr]|{\nf} \ar[r]^{f_{3,5}}  & M_5 \\
    M_0 \ar [u]^{f_{0,1}} \ar [r]_{f_{0,2}} & M_2 \ar[u]_{f_{2,3}} \ar[r]_{f_{2, 4}} & M_4 \ar[u]_{f_{4,5}}
  } \hspace{15 mm}
  \xymatrix@=3pc{
    M_1 \ar[r]^{f_{3,5}f_{1,3}}\ar@{}[dr]|{\nf} & M_5 \\
    M_0 \ar [u]^{f_{0,1}} \ar [r]_{f_{2,4}f_{0,2}} & M_4 \ar[u]_{f_{2,3}}
  }
  $$
In other words, right transitivity guarantees that the collection of independent squares is closed under horizontal composition. Dually, left transitivity gives closure under vertical composition. While this (in conjunction with the preceding properties) gives us the structure of double category, we opt instead, for practical reasons, to describe the category of independent squares in more straightforward 1-categorical terms:

\begin{defin}\label{k-nf-def}
  Given a \emph{right transitive} independence relation $\nf$ with existence, define $\cknf$ (where $\NF$ stands for ``nonforking'') to be the following category:

  \begin{enumerate}
  \item Its objects are morphisms $f: M_1 \rightarrow M_2$ in $\ck$.
  \item A morphism in $\ck_{\NF}$ from $f: M_1 \rightarrow M_2$ to $g: N_1 \rightarrow N_2$ is a pair $(h_1 : M_1 \rightarrow N_1, h_2 : M_2 \rightarrow N_2)$ such that the following is an independent diagram:
    
      $$  
      \xymatrix@=3pc{
        M_2 \ar[r]^{h_2} & N_2 \\
        M_1 \ar [u]^{f} \ar [r]_{h_1} &
        N_1 \ar[u]_{g}
      }
      $$
      
    \item Composition of morphisms is defined as expected. Note that right transitivity exactly gives that $\cknf$ is closed under composition and Lemma \ref{fd-rmk} gives the existence of an identity morphism.
  \end{enumerate}
\end{defin}

Clearly, $\cknf$ is a subcategory of the category $\ck^2$ of morphisms in $\ck$ with the same objects as $\ck^2$.

\begin{remark}\label{full-subcat-rmk}
  Let $\nf$ be a right transitive independence relation with the existence property. Then (using Lemma \ref{fd-rmk}) $\ck$ is isomorphic to a full subcategory of $\cknf$.
\end{remark}

We can use $\cknf$ to give a quick proof that invariance (Definition \ref{invariant-def}) follows from the properties defined so far.

\begin{lem}
  Let $\nf$ be an independence relation. If $\nf$ is right transitive and has existence\footnote{It suffices to assume the conclusion of Lemma \ref{fd-rmk}.}, then $\nf$ is invariant. 
\end{lem}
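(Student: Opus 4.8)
The plan is to reduce invariance to the closure of $\cknf$ under composition, manufacturing from Lemma \ref{fd-rmk} certain ``isomorphism squares'' that are automatically independent. Unwinding Definition \ref{invariant-def}, an isomorphism of amalgamation diagrams from $(f_1, f_2, g_1, g_2)$ (on $M_0, M_1, M_2, M_3$) to $(f_1', f_2', g_1', g_2')$ (on $M_0', M_1', M_2', M_3'$) is a family of isomorphisms $\phi_i \colon M_i \to M_i'$ satisfying $f_1' \phi_0 = \phi_1 f_1$, $f_2' \phi_0 = \phi_2 f_2$, $g_1' \phi_1 = \phi_3 g_1$, and $g_2' \phi_2 = \phi_3 g_2$. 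Assuming $\nf(f_1, f_2, g_1, g_2)$, the goal is to derive $\nf(f_1', f_2', g_1', g_2')$.

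First I would record the key consequence of Lemma \ref{fd-rmk}: any commuting square whose bottom edge is an isomorphism is independent. In particular the square with top $\phi_1^{-1}$, bottom $\phi_0^{-1}$, left $f_1'$, right $f_1$ is independent (it commutes by the first relation above), as is the square with top $\phi_3$, bottom $\phi_2$, left $g_2$, right $g_2'$ (commuting by the last relation). Reading these through Definition \ref{k-nf-def}, they are morphisms $(\phi_0^{-1}, \phi_1^{-1}) \colon f_1' \to f_1$ and $(\phi_2, \phi_3) \colon g_2 \to g_2'$ in $\cknf$, while the given independent diagram is exactly a morphism $(f_2, g_1) \colon f_1 \to g_2$ in $\cknf$.

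Then I would compose these three morphisms in $\cknf$. Since $\nf$ is right transitive, Definition \ref{k-nf-def}(3) guarantees that $\cknf$ is closed under composition, so the composite $(\phi_2, \phi_3) \circ (f_2, g_1) \circ (\phi_0^{-1}, \phi_1^{-1}) \colon f_1' \to g_2'$ is again a morphism of $\cknf$, i.e.\ its associated square is independent. Computing componentwise, this composite is $(\phi_2 f_2 \phi_0^{-1}, \phi_3 g_1 \phi_1^{-1})$, and the four commuting relations give $\phi_2 f_2 \phi_0^{-1} = f_2'$ and $\phi_3 g_1 \phi_1^{-1} = g_1'$. Hence the composite is precisely $(f_2', g_1') \colon f_1' \to g_2'$, which unwinds to $\nf(f_1', f_2', g_1', g_2')$, as desired.

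I expect the only delicate point to be bookkeeping: matching the orientation conventions of Definition \ref{k-nf-def} (source as left edge, target as right edge), and in particular remembering to use the inverses $\phi_0^{-1}, \phi_1^{-1}$ on the source side so that the composite begins at $f_1'$ and the leg identities come out correctly. No genuinely hard step arises, since Lemma \ref{fd-rmk} already supplies the independence of the isomorphism squares and right transitivity supplies composability---consistent with the footnote's remark that only the conclusion of Lemma \ref{fd-rmk} is needed.
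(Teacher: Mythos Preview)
Your proposal is correct and follows essentially the same approach as the paper: both arguments view the target diagram as the morphism $(f_2',g_1')\colon f_1'\to g_2'$ in $\cknf$, obtain it by composing the given $(f_2,g_1)\colon f_1\to g_2$ with the isomorphism squares supplied by Lemma~\ref{fd-rmk}, and conclude via right transitivity. Your version is simply a more explicit unwinding of the same computation, with the bookkeeping on orientations and inverses handled correctly.
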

\begin{proof}
  Assume we are given two isomorphic amalgamation diagrams $(f_1, f_2, g_1, g_2)$, $(f_1', f_2', g_1', g_2')$, where the first one is independent. To see that the second diagram is independent, it suffices to see that the morphism $H = (f_2', g_1')$ from $f_1'$ to $g_2'$  (a morphism of $\ck^2$) is in fact a morphism in $\cknf$. Now $f_1'$ is (in $\ck^2$) isomorphic to $f_1$, and $g_2'$ is isomorphic to $g_2$. Moreover, these isomorphisms are in $\cknf$ by Lemma \ref{fd-rmk}. Composing these isomorphisms with $(f_2, g_1)$ in $\cknf$, we obtain $H$, which therefore must also be in $\cknf$.
\end{proof}

Before studying $\cknf$ further, we note that a nice-enough independence relation will be monotonic in the following sense:

\begin{defin}\label{monot-def}
  Let $\nf$ be an independence relation. We say that $\nf$ is \emph{right monotonic} if whenever we have:

    $$  
  \xymatrix@=3pc{
    M_1 \ar[rr]^{f_{1,4}} &  & M_4 \\
    M_0 \ar [u]^{f_{0,1}} \ar [r]_{f_{0,2}} & M_2 \ar[r]_{f_{2, 3}} & M_3 \ar[u]_{f_{3,4}}
  }
  $$

if $(f_{0,1}, f_{2, 3} \circ f_{0, 2}, f_{1, 4}, f_{3, 4})$ is an independent diagram, then $(f_{0, 1},f_{0, 2},f_{1, 4},f_{3, 4} \circ f_{2, 3})$ is an independent diagram.

  We say that $\nf$ is \emph{left monotonic} if $\nf^d$ is right monotonic. We say that $\nf$ is \emph{monotonic} if it is both left and right monotonic.
\end{defin}

\begin{lem}\label{monot-lem}
  Let $\nf$ be an independence relation with existence and uniqueness. If $\nf$ is right transitive, then $\nf$ is right monotonic.
\end{lem}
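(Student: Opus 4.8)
The plan is to manufacture, via existence, a distinguished independent amalgam of the short span $(f_{0,1}, f_{0,2})$, to enlarge it by gluing a second independent square on the right with right transitivity so as to obtain an independent amalgam of the long span $(f_{0,1}, f_{2,3}\circ f_{0,2})$, to compare this with the hypothesized independent amalgam of the long span using uniqueness, and finally to transport the resulting equivalence back down to the short span.

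First I would apply existence to the span $(f_{0,1} : M_0 \to M_1,\ f_{0,2} : M_0 \to M_2)$ to obtain an independent amalgam $\nf(f_{0,1}, f_{0,2}, a_1, a_2)$ with $a_1 : M_1 \to P$ and $a_2 : M_2 \to P$. Then I would apply existence again, this time to the span $(a_2 : M_2 \to P,\ f_{2,3} : M_2 \to M_3)$, obtaining $\nf(a_2, f_{2,3}, b_1, b_2)$ with $b_1 : P \to Q$ and $b_2 : M_3 \to Q$; commutativity of this amalgam records the identity $b_1 \circ a_2 = b_2 \circ f_{2,3}$, which is used crucially below. These two independent squares share the edge $a_2 : M_2 \to P$, so right transitivity glues them into a single independent square, yielding $\nf(f_{0,1},\ f_{2,3}\circ f_{0,2},\ b_1\circ a_1,\ b_2)$, an independent amalgam of the long span.

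Now both $(b_1\circ a_1, b_2)$ and the hypothesized $(f_{1,4}, f_{3,4})$ are independent amalgams of the same span $(f_{0,1},\ f_{2,3}\circ f_{0,2})$, so uniqueness gives $(f_{0,1},\ f_{2,3}\circ f_{0,2},\ f_{1,4},\ f_{3,4}) \sim (f_{0,1},\ f_{2,3}\circ f_{0,2},\ b_1\circ a_1,\ b_2)$. To descend to the short span I would consider the restriction operation $\Phi$ sending an amalgam $(u_1 : M_1 \to W,\ u_2 : M_3 \to W)$ of the long span to the amalgam $(u_1,\ u_2 \circ f_{2,3})$ of the short span $(f_{0,1}, f_{0,2})$. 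A direct check shows $\Phi$ preserves $\sim^\ast$ (the same connecting maps witness equivalence of the images), hence preserves $\sim$. Applying $\Phi$ to the equivalence above, and using $b_2 \circ f_{2,3} = b_1 \circ a_2$, yields $(f_{0,1}, f_{0,2}, f_{1,4},\ f_{3,4}\circ f_{2,3}) \sim (f_{0,1}, f_{0,2},\ b_1\circ a_1,\ b_1\circ a_2)$.

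It then remains to observe that $(f_{0,1}, f_{0,2},\ b_1\circ a_1,\ b_1\circ a_2)$ is independent: it arises from the independent diagram $\nf(f_{0,1}, f_{0,2}, a_1, a_2)$ of the first step by post-composing both legs with $b_1$, and such post-composition gives a $\sim^\ast$-equivalent amalgam, so closure of $\nf$ under $\sim$ applies. Since the target square is $\sim$-equivalent to it and $\nf$ is closed under $\sim$, the target square $(f_{0,1}, f_{0,2}, f_{1,4},\ f_{3,4}\circ f_{2,3})$ is independent, which is the desired conclusion. I expect the main obstacle—and the reason both existence and uniqueness are genuinely needed—to be precisely this transfer between the long and short spans: uniqueness only delivers an equivalence $\sim$ (a transitive closure of $\sim^\ast$) between amalgams of the long span, so one cannot simply read off connecting maps for the short span, and it is the functoriality of $\Phi$ under $\sim$ that makes the descent go through uniformly.
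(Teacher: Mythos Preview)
Your proof is correct and follows essentially the same route as the paper: build an independent amalgam of the short span by existence, extend it along $f_{2,3}$ by existence and right transitivity to an independent amalgam of the long span, compare with the given one by uniqueness, and descend. The only difference is cosmetic: the paper uses that existence implies amalgamation, hence $\sim^\ast$ is already transitive (so uniqueness yields a single $\sim^\ast$-witness $h_4,h_5$ directly), whereas you route the descent through the restriction map $\Phi$ to handle a chain of $\sim^\ast$-steps; both arguments land in the same place.
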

\begin{proof}
  We start with the diagram from Definition \ref{monot-def} and assume that 
  $$(f_{0,1}, f_{2, 3} \circ f_{0, 2}, f_{1, 4}, f_{3, 4})$$ is an independent diagram. Using existence, pick $g_{1,4} : M_1 \rightarrow M_4'$, $g_{2,4}: M_2 \rightarrow M_4'$ such that $\nf (f_{0,1}, f_{0, 2}, g_{1, 4}, g_{2, 4})$. Now pick $g_{4,5} : M_4' \rightarrow M_5'$, $g_{3,5}: M_3 \rightarrow M_5'$ such that $\nf(g_{2, 4}, f_{2, 3}, g_{4,5}, g_{3, 4})$. By right transitivity, $\nf (f_{0,1}, f_{2,3} \circ f_{0,2}, g_{4, 5} \circ g_{1, 4}, g_{3,5})$. By uniqueness, we obtain the following picture:

  $$
  \xymatrix@=3pc{
    & M_4' \ar[r]^{g_{4,5}} & M_5' \ar @{.>} [r]^{h_5} & N \\
    M_1 \ar[ur]^{g_{1,4}} \ar[rr]|>>>>>>>>{f_{1,4}} &  & M_4 \ar @{.>} [ur]_{h_4} \\
    M_0 \ar [u]^{f_{0,1}} \ar [r]_{f_{0,2}} & M_2 \ar[uu]|>>>>>>>{g_{2,4}} \ar[r]_{f_{2, 3}} & M_3 \ar[u]_{f_{3,4}} \ar @/_2pc/ [uu]_{g_{3,5}}
  }
  $$

  This shows that the amalgams $(f_{0,1}, f_{0,2}, g_{1, 4}, g_{2, 4})$ and $(f_{0, 1}, f_{0, 2}, f_{1, 4}, f_{3,4} \circ f_{2,3})$ are equivalent. Since the first is an independent diagram, the second also is. 
\end{proof}

The following variation will also be useful:

\begin{lem}\label{descent}
Let $\nf$ be a right monotonic independence relation and consider
$$  
  \xymatrix@=3pc{
    M_1 \ar[r]^{} & M_3 \ar[r]^{}  & M_5 \\
    M_0 \ar [u]^{} \ar [r]_{} & M_2 \ar[u]_{} \ar[r]_{} & M_4 \ar[u]_{}
  }
  $$
where the outer rectangle is independent. Then the left square is independent.
\end{lem}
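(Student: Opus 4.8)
The plan is to obtain the result from a single application of right monotonicity, followed by a routine equivalence of amalgams. I will first fix notation for the grid, writing $f_{i,j}$ for the morphism $M_i \to M_j$, so that the outer rectangle is the amalgamation diagram $(f_{0,1}, f_{2,4}\circ f_{0,2}, f_{3,5}\circ f_{1,3}, f_{4,5})$ and the left square is $(f_{0,1}, f_{0,2}, f_{1,3}, f_{2,3})$. The key observation is that the outer rectangle is exactly an instance of the \emph{hypothesis} of right monotonicity (Definition \ref{monot-def}), with the middle column of the grid playing the role of the intermediate object there. Concretely, I would match the data of Definition \ref{monot-def} to the grid by sending its $M_3$ to our $M_4$ and its $M_4$ to our $M_5$, its map $M_2 \to M_3$ to $f_{2,4}$, its map $M_3 \to M_4$ to $f_{4,5}$, and its top map $M_1 \to M_4$ to $f_{3,5}\circ f_{1,3}$. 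Under this identification, the monotonicity hypothesis is precisely the independence of the outer rectangle, and its conclusion yields that $(f_{0,1}, f_{0,2}, f_{3,5}\circ f_{1,3}, f_{4,5}\circ f_{2,4})$ is independent. Since the right square commutes we have $f_{4,5}\circ f_{2,4} = f_{3,5}\circ f_{2,3}$, so this is the ``tall'' square $(f_{0,1}, f_{0,2}, f_{3,5}\circ f_{1,3}, f_{3,5}\circ f_{2,3})$.

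It then remains to descend from this tall square back to the left square, which differs only by post-composing both top morphisms with $f_{3,5}: M_3 \to M_5$. I would show that these two amalgams of the span $(f_{0,1}, f_{0,2})$ are $\sim^\ast$-equivalent, with witness $N_\ast = M_5$, $g^a = f_{3,5}: M_3 \to M_5$, and $g^b = \id_{M_5}$: one checks directly that $g^a \circ f_{1,3} = g^b\circ(f_{3,5}\circ f_{1,3})$ and $g^a\circ f_{2,3} = g^b\circ (f_{3,5}\circ f_{2,3})$, which are exactly the commutativity conditions of Definition \ref{sim-def}(3). Because an independence relation is by definition closed under $\sim$, the independence of the tall square transfers to the left square $(f_{0,1}, f_{0,2}, f_{1,3}, f_{2,3})$, completing the proof. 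Note that only right monotonicity and closure under $\sim$ are used.

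The argument is short, and the only place that requires care is the bookkeeping in the first step: one must correctly recognize the outer rectangle as the monotonicity hypothesis and read off that the conclusion retains the top-right corner $M_5$ while shrinking the right base from $M_4$ to $M_2$. The $\sim$-step afterwards — the observation that enlarging the top-right corner along $f_{3,5}$ leaves the equivalence class of the amalgam unchanged — is the conceptual heart of the descent, but it is an essentially formal consequence of the definition of $\sim^\ast$ and hence not a genuine obstacle.
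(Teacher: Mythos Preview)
Your proof is correct and is essentially identical to the paper's own argument: apply right monotonicity once to obtain the independent square with corners $M_0, M_1, M_2, M_5$, then observe that this amalgam is $\sim^\ast$-equivalent to the left square via $f_{3,5}$ and $\id_{M_5}$. The paper presents exactly this, only with less explicit notation.
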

\begin{proof}
Since $\nf$ is right monotonic, the square
$$  
      \xymatrix@=3pc{
      M_1   \ar[r]^{} & M_5 \\
        M_0 \ar [u]^{} \ar [r]_{} &
        M_2 \ar[u]_{}
      }
      $$
is independent. Since this square is equivalent to the left square, the latter is independent. The equivalence is documented by     
the diagram

        \[
        \xymatrix{ & M_3 \ar@{.>}[r]^{} & M_5 \\
    M_1 \ar[ru]^{} \ar[rr]|>>>>>{} & & M_5 \ar@{.>}[u]_{} \\
    M_0 \ar[u]^{} \ar[r]_{} & M_2 \ar[uu]|>>>>>{}  \ar[ur]_{} & \\
  }
        \]
\end{proof}

We can also define a monotonicity property with respect to the base:

\begin{defin}\label{base-monot-def}
  Let $\nf$ be an independence relation. We say that $\nf$ is \emph{right base-monotonic} if whenever we have:

  $$  
  \xymatrix@=3pc{
    M_1 \ar[rr]^{f_{1,4}} &  & M_4 \\
    M_0 \ar [u]^{f_{0,1}} \ar [r]_{f_{0,2}} & M_2 \ar[r]_{f_{2, 3}} & M_3 \ar[u]_{f_{3,4}}
  }
  $$

  and the outer square $(f_{0,1}, f_{2,3} \circ f_{0,2}, f_{1,4}, f_{3,4})$ is independent, then there exists $f_{4,4'}: M_4 \rightarrow M_4'$ and $f_{1,1'}: M_1 \rightarrow M_1'$, $f_{2, 1'}: M_2 \rightarrow M_1'$, $f_{1', 4'} : M_1' \rightarrow M_4'$ such that $(f_{2,1'}, f_{2,3}, f_{1', 4'}, f_{4, 4'} \circ f_{3,4})$ is independent and the diagram below commutes:

  \[
  \xymatrix@=3pc{
    & M_1' \ar@{.>}[r]_{f_{1', 4'}} & M_4' \\
    M_1 \ar@{.>}[ur]^{f_{1, 1'}} \ar[rr]|>>>>>>>>>{f_{1,4}} &  & M_4 \ar@{.>}[u]_{f_{4, 4'}} \\
    M_0 \ar [u]^{f_{0,1}} \ar [r]_{f_{0,2}} & M_2 \ar[r]_{f_{2, 3}} \ar@{.>}[uu]|>>>>>>{f_{2, 1'}} & M_3 \ar[u]_{f_{3,4}}
  }
  \]

  As usual, $\nf$ left base-monotonic means that $\nf^d$ is right base-monotonic, and base-monotonic means both left and right.
\end{defin}

Any nice-enough independence relation is base-monotonic. More precisely:

\begin{lem}\label{base-monot-lem}
  If $\nf$ is right transitive and has uniqueness and existence, then $\nf$ is right base-monotonic.
\end{lem}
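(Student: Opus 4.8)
The plan is to mimic the proof of right monotonicity (Lemma~\ref{monot-lem}): build the desired independent square over the \emph{new} base $M_2$ directly using existence, glue it to an independent square over $M_0$ via right transitivity, and then invoke uniqueness to compare the resulting square over $M_0$ with the given outer square. The one subtlety is that the statement demands the right-hand leg of the new square factor through $M_4$; this is arranged at the very end by reading off the comparison maps produced by uniqueness.

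Concretely, first I would apply existence to the span $(f_{0,1}, f_{0,2})$ to obtain maps $f_{1,1'}\colon M_1 \to M_1'$ and $f_{2,1'}\colon M_2 \to M_1'$ with $\nf(f_{0,1}, f_{0,2}, f_{1,1'}, f_{2,1'})$; this already produces the object $M_1'$ and the maps $f_{1,1'}, f_{2,1'}$ appearing in the conclusion, and commutativity of the lower-left square of the target diagram is exactly the amalgam condition. Next I would apply existence again, this time to the span $(f_{2,1'}, f_{2,3})$, to obtain $f_{1',4'}^0\colon M_1' \to M_4^0$ and $h\colon M_3 \to M_4^0$ with $\nf(f_{2,1'}, f_{2,3}, f_{1',4'}^0, h)$. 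This is an independent square over the desired base $M_2$, but whose right leg $h$ need not yet factor through $M_4$.

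The two squares just built are horizontally composable, so right transitivity yields $\nf(f_{0,1}, f_{2,3}\circ f_{0,2}, f_{1',4'}^0\circ f_{1,1'}, h)$, an independent amalgam of the span $(f_{0,1}, f_{2,3}\circ f_{0,2})$. The given outer square $\nf(f_{0,1}, f_{2,3}\circ f_{0,2}, f_{1,4}, f_{3,4})$ is a second independent amalgam of the \emph{same} span, so uniqueness gives $(f_{0,1}, f_{2,3}\circ f_{0,2}, f_{1,4}, f_{3,4}) \sim (f_{0,1}, f_{2,3}\circ f_{0,2}, f_{1',4'}^0\circ f_{1,1'}, h)$. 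Since existence entails amalgamation, $\sim^\ast$ is already transitive (the remark following Definition~\ref{sim-def}), so this is a single $\sim^\ast$-step: there exist an object $M_4'$ and maps $f_{4,4'}\colon M_4 \to M_4'$ and $g\colon M_4^0 \to M_4'$ satisfying $f_{4,4'}\circ f_{1,4} = g\circ f_{1',4'}^0\circ f_{1,1'}$ and $f_{4,4'}\circ f_{3,4} = g\circ h$.

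Finally I would set $f_{1',4'} := g\circ f_{1',4'}^0$. Post-composing the independent square $\nf(f_{2,1'}, f_{2,3}, f_{1',4'}^0, h)$ with $g$ produces an amalgam equivalent to it (witnessed by $g$ and the identity on $M_4'$), so $\nf(f_{2,1'}, f_{2,3}, f_{1',4'}, g\circ h)$ holds by closure under $\sim$; and since $g\circ h = f_{4,4'}\circ f_{3,4}$, this is precisely $\nf(f_{2,1'}, f_{2,3}, f_{1',4'}, f_{4,4'}\circ f_{3,4})$, as required. The remaining commutativities of the target diagram follow by tracing definitions: the lower-left square is the first amalgam, the factorization $f_{4,4'}\circ f_{1,4} = f_{1',4'}\circ f_{1,1'}$ is the first uniqueness equation, and that the new square is a genuine amalgam of $(f_{2,1'}, f_{2,3})$ follows from the Step-two amalgam identity $f_{1',4'}^0\circ f_{2,1'} = h\circ f_{2,3}$ together with the second uniqueness equation. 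The main obstacle, and essentially the only place where care is needed, is forcing the right leg to factor through $M_4$: this is exactly what the uniqueness comparison delivers, through the maps $f_{4,4'}$ and $g$ and the identity $f_{4,4'}\circ f_{3,4} = g\circ h$.
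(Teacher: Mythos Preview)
Your proof is correct and follows the same overall strategy as the paper: build an independent square over the new base $M_2$ via existence, horizontally compose with an independent square over $M_0$ using right transitivity, compare the result with the given outer square via uniqueness, and read off the required factorization through $M_4$ from the $\sim^\ast$-witness.

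The one noteworthy difference is in how the left square is obtained. The paper first invokes right monotonicity (Lemma~\ref{monot-lem}) to conclude that $(f_{0,1}, f_{0,2}, f_{1,4}, f_{3,4}\circ f_{2,3})$ is already independent, and so simply takes $M_1' := M_4$, $f_{1,1'} := f_{1,4}$, $f_{2,1'} := f_{3,4}\circ f_{2,3}$. You instead build a fresh $M_1'$ by applying existence to the span $(f_{0,1}, f_{0,2})$. Your route avoids the dependence on Lemma~\ref{monot-lem} at the cost of one extra application of existence; the paper's route produces a slightly more economical witness (no new object for $M_1'$). Structurally the two arguments are otherwise identical.
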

\begin{proof}
  This is similar to (E)(b) in \cite[III.9.6]{shelahaecbook}. In detail, start with the setup of Definition \ref{base-monot-def}. By Lemma \ref{monot-lem}, the square $(f_{0,1}, f_{0,2}, f_{1,4}, f_{3,4} \circ f_{2,3})$ is independent. By existence, find $f_{3,3^\ast}: M_3 \rightarrow M_3^\ast$ and $f_{4,3^\ast} : M_4 \rightarrow M_3^\ast$ such that the square $(f_{3,4} \circ f_{2,3}, f_{2,3}, f_{4, 3'}, f_{3, 3^\ast})$ is independent. By right transitivity, the square $(f_{0, 1}, f_{2,3} \circ f_{0,2}, f_{4, 3^\ast} \circ f_{1,4}, f_{3, 3^\ast})$ is independent. By uniqueness, this square is equivalent to $(f_{0,1}, f_{2, 3} \circ f_{0,2}, f_{1,4}, f_{3,4})$. Amalgamate these two squares: let $f_{4, 4^\ast}: M_4 \rightarrow M_4^\ast$ and $f_{3^\ast, 4^\ast}: M_3^\ast \rightarrow M_4^\ast$ be the amalgams. Finally, let $M_1' := M_4$, $M_4' := M_4^\ast$, $f_{4, 4'} := f_{4, 4^\ast}$ $f_{1, 1'} := f_{1, 4}$, $f_{2, 1'} := f_{3,4} \circ f_{2, 3}$, $f_{1', 4'} := f_{3^\ast, 4^\ast} \circ f_{4, 3^\ast}$.
\end{proof}

A transitive independence relation with existence and uniqueness still falls short of the forking-like notion we seek: we must require, too, that it satisfy a suitable local character property. We formulate this by requiring that the induced category be accessible:

\begin{defin}\label{stable-def}
  Let $\nf$ be an independence relation.
  \begin{enumerate}
  \item We say that $\nf$ is \emph{right accessible} if it is right transitive and $\cknf$ is accessible (Definition \ref{acc-def}). We say that $\nf$ is \emph{left accessible} if $\nf^d$ is right accessible. We say that $\nf$ is \emph{accessible} if it is both left and right accessible.
  \item We say that $\nf$ is \emph{stable} if it is symmetric, transitive, accessible, has existence, and has uniqueness.
  \end{enumerate}
\end{defin}
\begin{remark}
  In several examples of interest, $\cknf$ will actually be an AEC. This is the case, for example, when $\ck$ is the category of models of a stable first-order theory or when $\K$ has an almost fully good independence relation in the sense of \cite[A.2]{ap-universal-apal}.
\end{remark}

\begin{remark}\label{acc2}
Let $\nf$ be a right monotonic, right transitive, independence relation with existence. Assume that $\cknf$ is $\lambda$-accessible. Then the embedding of $\cknf$ into $\ck^2$ preserves $\lambda$-directed colimits. Indeed, consider a $\lambda$-directed diagram $D:\cd\to\cknf$. Let $(u_1^d,u_2^d):Dd\to g$ be its colimit in $\ck^2$ and $(\bar{u}_1^d,\bar{u}_2^d):Dd\to \bar{g}$ its colimit in $\cknf$. 
We get a unique morphism $(t_1,t_2):g\to\bar{g}$ in $\ck^2$. Since the outer rectangle of 
$$  
  \xymatrix@=3pc{
    N_i \ar[r]^{u_2^d} & N \ar[r]^{t_2}  & \bar{N} \\
    M_i \ar [u]^{Dd} \ar [r]_{u_1^d} & M \ar[u]_{g} \ar[r]_{t_1} & \bar{M} \ar[u]_{\bar{g}}
  }
  $$
is independent for each $d\in\cd$, \ref{descent} implies that the left squares are independent for each $d\in\cd$. It is easy to check that $t_1$ is an isomorphism, so by Lemma \ref{fd-rmk}, the right square is also independent. Thus $g$ is a colimit of $D$ in $\cknf$.
\end{remark}

While we have made no requirements on the underlying category $\ck$, accessibility of $\cknf$ implies accessibility of $\ck$:

\begin{lem}\label{acc}
  If $\nf$ is right monotonic, right transitive, right accessible and has existence, then $\ck$ is accessible.
\end{lem}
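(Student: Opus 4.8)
The plan is to realize $\ck$ as a reflective subcategory of $\cknf$ and transfer accessibility along the reflector. Fix a regular cardinal $\lambda$ for which $\cknf$ is $\lambda$-accessible (possible since $\nf$ is right accessible). By Remark \ref{full-subcat-rmk}, the assignment $D : M \mapsto \id_M$ (sending $h : M \to N$ to $(h,h)$) is a full embedding of $\ck$ into $\cknf$; note that each square $(h,h) : \id_M \to \id_N$ is $\nf$-independent by Lemma \ref{fd-rmk}, since its vertical legs are identities. The other functor I would use is the codomain functor $\cod : \cknf \to \ck$, sending $f : M_1 \to M_2$ to $M_2$ and a morphism $(h_1, h_2)$ to $h_2$; this is the restriction to $\cknf$ of the evaluation functor $\ck^2 \to \ck$.

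First I would show that $\cod : \cknf \to \ck$ preserves $\lambda$-directed colimits and, along the way, that $\ck$ has them. Indeed, given a $\lambda$-directed diagram in $\ck$, applying $D$ yields a $\lambda$-directed diagram in $\cknf$ (its transition maps are independent squares by Lemma \ref{fd-rmk}), whose colimit exists by $\lambda$-accessibility. By Remark \ref{acc2}, this colimit is computed in $\ck^2$, hence componentwise; since $\lambda$-directed colimits in the arrow category $\ck^2$ are pointwise and are preserved by the evaluation (codomain) functor, the codomain component is a colimit in $\ck$ of the original diagram. Thus $\ck$ has $\lambda$-directed colimits, $\cod$ preserves them, and moreover the colimit in $\cknf$ of a diagram of identities is again an identity, i.e.\ $D$ itself preserves $\lambda$-directed colimits.

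Next I would verify the adjunction $\cod \dashv D$. A morphism in $\cknf$ from $f : A \to B$ to $\id_M = DM$ is a commuting square into $\id_M$, which by Lemma \ref{fd-rmk} is automatically independent; such a square is determined by its top leg $B \to M$, giving a natural bijection $\cknf(f, DM) \cong \ck(\cod f, M)$. Hence $\cod$ is left adjoint to the full embedding $D$, so (the image of) $\ck$ is reflective in $\cknf$. Because the right adjoint $D$ preserves $\lambda$-directed colimits (previous paragraph), the standard fact that such a left adjoint preserves $\lambda$-presentable objects (see \cite{adamek-rosicky}) shows that $\cod$ carries $\lambda$-presentable objects of $\cknf$ to $\lambda$-presentable objects of $\ck$.

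Finally I would assemble accessibility. Let $T$ be a set of $\lambda$-presentable objects of $\cknf$ witnessing its $\lambda$-accessibility, and put $S := \{\cod t : t \in T\}$, a set of $\lambda$-presentable objects of $\ck$ by the previous step. For any $M \in \ck$, write $\id_M = DM$ as a $\lambda$-directed colimit of objects of $T$; applying the colimit-preserving functor $\cod$ exhibits $M = \cod(\id_M)$ as a $\lambda$-directed colimit of objects of $S$. Together with the existence of $\lambda$-directed colimits, this yields that $\ck$ is $\lambda$-accessible (Definition \ref{acc-def}). I expect the main obstacle to be the bookkeeping around Remark \ref{acc2}---ensuring that colimits genuinely pass between $\cknf$, $\ck^2$, and $\ck$---and, above all, the transfer of $\lambda$-presentability, which is the crux of accessibility and the one step that truly uses the adjunction rather than mere closure under directed colimits.
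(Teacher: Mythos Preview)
Your approach via the adjunction $\cod \dashv D$ is a genuinely different and arguably cleaner route than the paper's, which instead verifies by hand that the $\cknf$-colimit of a diagram of identities is again an identity (by exhibiting an explicit inverse to the colimit object $g$) and then checks the universal property in $\ck$ directly before invoking Remark~\ref{acc2}. Your reflection argument packages this more conceptually, and the transfer of $\lambda$-presentability along the left adjoint is a nice way to finish.

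There is, however, a circularity in the order you present things. You invoke Remark~\ref{acc2} to conclude that the $\cknf$-colimit of $DF$ is computed componentwise in $\ck^2$, and from this deduce that $\ck$ has $\lambda$-directed colimits. But the proof of Remark~\ref{acc2} \emph{begins} by forming the colimit in $\ck^2$, which is only available once $\ck$ already has $\lambda$-directed colimits; so as written you are assuming what you are trying to prove. The fix is simple and keeps your strategy intact: establish the adjunction $\cod \dashv D$ first (this needs only Lemma~\ref{fd-rmk}, no colimits at all). Since $\cod$ is then a left adjoint, it preserves any colimit that exists in $\cknf$; hence for a $\lambda$-directed $F:I\to\ck$ the object $\cod\bigl(\colim_{\cknf} DF\bigr)$ is automatically a colimit of $\cod\circ D\circ F = F$ in $\ck$. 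Now $\ck$ has $\lambda$-directed colimits, Remark~\ref{acc2} applies non-circularly to show that $D$ preserves them, and your final paragraph (transfer of $\lambda$-presentables and the $\lambda$-directed presentation of each $M$) goes through as stated.
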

\begin{proof}
Assume that $\cknf$ is $\lambda$-accessible. We begin by proving that $\ck$ has $\lambda$-directed colimits. 
Let $\seq{f_{i, j} : M_i \rightarrow M_j \mid i, j \in I}$ be a $\lambda$-directed diagram in $\ck$. We know that $\cknf$ has 
$\lambda$-directed colimits so (identifying $\ck$ with its copy in $\cknf$, see Remark \ref{full-subcat-rmk}), let 
$\seq{(f_i^1, f_i^2) : (\id_{M_i}:M_i\to M_i) \rightarrow (g: N_1 \rightarrow N_2) \mid i \in I}$ be a colimit in $\cknf$.

We first prove that $g$ is an isomorphism. Indeed, $\seq{(f^1_i,f^1_i): \id_{M_i} \rightarrow \id_{N_1} \mid i \in I}$ is a cocone in $\cknf$. Thus there is a unique morphism $(h_1,h_2): g \rightarrow \id_{N_1}$ such that for all $i \in I$, $h_1f^1_i=f^1_i$ and $h_2f^2_i=f^1_i$. Since $(\id_{N_1},g): \id_{N_1} \rightarrow g$ is a morphism in $\cknf$, for all $i \in I$, $h_1f^1_i=f^1_i$, and $gh_2f^2_i=gf^1_i=f^2_i$, we get that $h_1=\id_{N_1}$ and $gh_2=\id_{N_2}$. Since $h_2g=h_1=\id_{N_1}$, $h_2$ is the inverse of $g$, as desired.

We now claim that $\seq{f_i^2 \mid i \in I}$ is a colimit of the diagram in $\ck$. Let $\seq{g_i : M_i \rightarrow N \mid i \in I}$ be 
a cocone. Then by Lemma \ref{fd-rmk}, $\seq{(g_i, g_i):\id_{M_i}\to\id_N \mid i \in I}$ is a cocone in $\cknf$. Therefore there must exist a unique morphism $(h_1, h_2)$ such that $h_\ell f_i^\ell=g_i$ for $\ell = 1,2$ and $i \in I$. In particular, $h_2 f_i^2=g_i$. Moreover, $h_2$ is unique (in $\ck$) with this property: if $hf_i^2= g_i$ for all $i \in I$, then $hgf_i^1=g_i$ and, by Lemma \ref{fd-rmk} (recalling that, by the preceding paragraph, $g$ is an isomorphism), $(hg,h)$ is a morphism from $g: N_1 \rightarrow N_2$ to $\id_N$ in $\cknf$. Hence $h=h_2$. This completes the proof that $\ck$ has $\lambda$-directed colimits.

Since $\ck$ has $\lambda$-directed colimits, so does $\ck^2$. By Remark \ref{acc2}, $\lambda$-directed colimits are the same in $\ck^2$ and $\cknf$.  It is easy to verify, then, that $\ck^2$, and hence also $\ck$, are $\lambda$-accessible.
\end{proof}

\begin{remark}\label{acc1}
  We have just shown that the embedding of $\cknf$ into $\ck$ is accessible. In the other direction, the proof above shows that the category $\ck$ is accessibly embedded (see \cite[2.35]{adamek-rosicky}) into $\cknf$.
\end{remark}

Let us illustrate what a stable independence relation looks like by considering accessible categories with weak \emph{polycolimits}:

\begin{defin}\label{poly-def}
  A \emph{polyinitial} object is a set $\ci$ of objects of a category $\ck$ such that for every object $M$ in $\ck$:

\begin{enumerate}
\item There is a unique $i \in\ci$ having a morphism $i \to M$.
\item For each $i \in \ci$, given $f,g:i \to M$, there is a unique (isomorphism) $h:i \to i$ with $fh=g$.
\end{enumerate}

The \emph{polycolimit} of a diagram $D$ in a category $\ck$ is a polyinitial object in the category of cones on $D$, i.e. a set of cones such that for any cone on $D$, there will be an induced map from exactly one of the members of the set, and this map will be unique up to unique isomorphism, in the sense of \ref{poly-def}(2). The \emph{weak polycolimit} of $D$ is defined similarly, except that we waive the uniqueness requirement on the map.
\end{defin}

For example, the algebraic closures of the prime fields form a polyinitial object in the category of algebraically closed fields. 

We recall, too, the related concept of a multicolimit, again derived from the notion of a multiinitial set of objects: a multiinitial set of objects in a category $\ck$ is given by replacing (2) above---uniqueness of the morphism $i\to M$ up to isomorphism---by actual uniqueness.  That is, for each $M\in\ck$, there is a unique $i\in\mathcal I$ and a unique $i\to M$.

In the category of fields, for example, the prime fields form a multiinitial object.

We say, incidentally, that a category is locally $\lambda$-polypresentable (respectively, locally $\lambda$-multipresentable) if it is $\lambda$-accessible and has all polycolimits (respectively, multicolimits).  Note that locally $\lambda$-presentable implies locally $\lambda$-multipresentable implies locally $\lambda$-polypresentable implies $\lambda$-accessible.

\begin{remark}\label{multi} \
  \begin{enumerate}
    \item\label{multi-1} Let $\nf$ be a stable independence relation in an accessible category $\ck$ with weak polycolimits and all morphisms monomorphisms (for example in a $\mu$-AEC admitting intersections, see \cite[2.4, 5.7]{multipres-pams}). Consider a span $s=(f_1,f_2)$ and its weak polypushout $(p_{i,1},p_{i,2})$, $i\in I_s$ (note that in general $I_s$ may be empty but here it is not: the existence property of $\nf$ implies amalgamation). This means that we have commutative squares
$$
\xymatrix@=3pc{
M_1 \ar[r]^{p_{i,1}} & P_i \\
M_0 \ar [u]^{f_1} \ar [r]_{f_2} &
M_2 \ar[u]_{p_{i,2}}
}
$$
such that for any commutative square
$$
\xymatrix@=3pc{
M_1 \ar[r]^{g_1} & M \\
M_0 \ar [u]^{f_1} \ar [r]_{f_2} &
M_2 \ar[u]_{g_2}
}
$$
there exist a unique $i\in I_s$ and (not necessarily unique) $t:P_i\to M$ such that $tp_{i,1}=g_1$ and $tp_{i,2}=g_2$. By closure under $\sim$ and the uniqueness property of the weak polypushout, there is exactly one $i\in I_s$ such that $\nfs{M_0}{M_1}{M_2}{P_i}$.
By monotonicity, these choices are coherent in the sense that given a morphism of spans $(\id_{M_0},h_1,h_2):s\to s'$,
we get an induced morphism $P_i\to P_{i'}$ where $i$ is chosen from $I_s$ and $i'$ from $I_{s'}$. For a general object $M$, the characterization of the relation $\nfs{M_0}{M_1}{M_2}{M}$ is by closure under $\sim$: the relation holds if and only if the induced map from the weak polypushout is $P_i\to M$ for the unique $P_i$ such that $\nfs{M_0}{M_1}{M_2}{P_i}$. 

On the other hand, a relation $\nf$ given by a coherent choice of weak polypushouts satisfies all axioms of stable independence with the possible exception of accessibility (and existence, if $\ck$ does not have amalgamation). We will see (Theorem \ref{canon-thm}) that in a category with chain bounds there is at most one coherent choice of weak polypushouts giving a stable independence relation.
\item We will see in the next two sections that for any coregular locally presentable category $\ck$, pushouts of regular monomorphisms form a coherent choice of multipushouts in $\ck_{reg}$ (the subcategory of $\ck$ with the same objects but only regular monomorphisms). These pushouts will in particular be pullback squares in $\ck_{reg}$.
\item One cannot expect that an accessible category with weak polycolimits and all morphisms monomorphisms will have pushouts. In this case,
  we might try to get a stable independence relation by taking all commutative squares
  
$$
\xymatrix@=3pc{
M_1 \ar[r]^{} & M_3 \\
M_0 \ar [u]^{} \ar [r]_{} &
M_2 \ar[u]_{}
}
$$
Without the existence of pushouts, this relation satisfies all axioms of a stable independence relation except the uniqueness axiom.

We note, too, that one must be careful in assuming pushouts, as this will often prove too strong of a restriction on the category $\ck$.  For example, let $\ck$ be an accessible category with multicolimits and all morphisms monomorphisms.  If $\ck$ has pushouts, it must be small.  To see this, consider an object $M$,
the instance $O$ of a multiinitial object with morphism $O\to M$ and the pushout
$$
\xymatrix@=3pc{
M \ar[r]^{} & M\coprod M \\
O \ar [u]^{} \ar [r]_{} &
M \ar[u]_{}
}
$$
Since the codiagonal $M\coprod M\to M$ is a split epimorphism, it is an isomorphism and thus the two coproduct injections
$M\to M\coprod M$ are equal. Consequently, for any $N$ in $\ck$, $M$ has at most one morphism $M\to N$. Thus $\ck$ is thin and, since it is an accessible category, it must therefore be small.
\end{enumerate}
\end{remark}

We end this section by giving several examples and non-examples:

\begin{exams}\label{indep-examples} \
\begin{enumerate}
  \item\label{indep-ex-1} If $\K$ is the AEC of all sets (ordered with substructure), then $\nfs{M_0}{M_1}{M_2}{M_3}$ if and only if $M_0 \subseteq M_\ell \subseteq M_3$, $\ell = 1,2$, and $M_1 \cap M_2 = M_0$ is a stable independence relation.
  \item\label{indep-ex-2} For $F$ a field, if $\K$ is the AEC of all $F$-vector spaces (ordered with subspace), then $\nfs{M_0}{M_1}{M_2}{M_3}$ if and only if $M_0 \subseteq M_\ell \subseteq M_3$, $\ell = 1,2$, and $M_1 \cap M_2 = M_0$ is a stable independence relation. We will see that this example (along with (1) above) is an instance of Theorem \ref{indep}, which gives a general condition on when pullback induces a stable independence notion.

  \item\label{acf-example} If $\K$ is the AEC of all algebraically closed fields of characteristic $p$ (for $p$ a fixed prime or 0) ordered by subfield, define  $\nfs{M_0}{M_1}{M_2}{M_3}$ if and only if $M_0 \lea M_\ell \lea M_3$ and for any finite $A \subseteq U M_1$, and any $a \in U M_1$, the transcendence degree of $a$ over $A M_2$ is the same as the transcendence degree of $a$ over $AM_0$.  This is a stable independence notion. Note however that $M_0 \lea M_\ell \lea M_3$ and $M_1 \cap M_2 = M_0$ does \emph{not} imply $\nfs{M_0}{M_1}{M_2}{M_3}$. This is because the pregeometry induced by algebraic closure is not modular. Nevertheless, $\K$ has weak polycolimits, hence we are in the setup described by Remark \ref{multi}(\ref{multi-1}).
  \item\label{field-example} Let $\K$ be the AEC of all (not necessarily algebraically closed) fields of characteristic $p$ (for $p$ a fixed prime or 0) ordered by elementary substructure. Then it is well-known that $\K$ does not have any stable independence notion (indeed, ordered fields belong to $\K$, so $T$ is not a stable theory). However, the subclass of $\aleph_0$-saturated models of $\K$ is nothing but the AEC of algebraically closed fields of characteristic $p$ ordered by subfield (by the model-completeness of algebraically closed fields), which \emph{does} have a stable independence notion. 
  \item\label{fo-example} Let $T$ be a stable first-order theory. Write $\nfs{A}{B}{C}{M}$ if and only if $A, B, C \subseteq U M$, $M \models T$, and $\tp (\bb / AC; M)$ does not fork over $A$ (in the original sense of Shelah, see \cite[III.1.4]{shelahfobook}), where $\bb$ is any enumeration of $B$ and $\tp (\bb / AC; M)$ is the set of first-order formulas with parameters from $AC$ satisfied by $\bb$ in $M$. There is a certain expansion $T^{eq}$ of $T$ which ``eliminates imaginaries'' in the sense that for every definable equivalence relation $E$ there is a definable function $F_E$ sending two $E$-equivalent elements to the same object (i.e.\ the equivalence classes of $E$ are also elements), see \cite[\S III.6]{shelahfobook}.\footnote{We note that the passage from $T$ to $T^{eq}$ has a category-theoretic generalization, developed independently (from Shelah) by Makkai and Reyes; that is, the passage from a logical category $\mathbb T$ to its {\it pretopos completion}, \cite{makkai-reyes}.  That these ideas are connected is due to Makkai, see e.g. \cite{harnik-teq}.} Expanding to $T^{eq}$ leads to an equivalent category of models, so without loss of generality $T = T^{eq}$. Then it is known that $\nfs{A}{B}{C}{M}$ if and only if $\nfs{\acl (A)}{\acl (AB)}{\acl (AC)}{M}$, where $\acl (A)$ denotes the set of elements $b$ in $M$ for which there is a formula $\phi (x)$ with parameters from $A$ with only finitely many solutions, one of which is $b$. Thus it suffices to consider forking over algebraically closed sets. Let $\K$ be the AEC of algebraically closed sets in $T$, ordered by being a substructure (see \cite[V.B.2.1]{shelahaecbook2}). Then $\nf$ induces a stable independence relation on $\K$.
  \item\label{graph-example} Let $\K$ be the AEC of all (undirected) graphs, ordered by being a full subgraph. Define $\nfs{M_0}{M_1}{M_2}{M_3}$ to hold if and only if $M_0 \subseteq M_\ell \subseteq M_3$, $\ell = 1,2$, $M_1 \cap M_2 = M_0$, and there are no edges of $M_3$ going from $M_1 \backslash M_0$ to $M_2 \backslash M_0$. Then $\nf$ satisfies all the axioms of a stable independence relation except accessibility. One can define a different independence notion (that will similarly satisfy all the axioms except accessibility) similarly by requiring instead that \emph{all} possible cross edges are present between $M_1 \backslash M_0$ and $M_2 \backslash M_0$. This is \cite[II.6.4]{shelahaecbook}, see also \cite[4.15]{bgkv-apal}. We will see (Example \ref{locally-finite-example}) that when considering only graphs whose vertices have bounded degree, $\nf$ is stable and becomes the only independence notion.
  \item\label{graph-example-2} Let $\K$ be the AEC of all graphs, as before. One can modify $\nf$ so that it satisfies existence, uniqueness, and monotonicity but not transitivity: say $\nfs{M_0}{M_1}{M_2}{M_3}$ holds if and only if $M_0 \subseteq M_\ell \subseteq M_3$, $\ell = 1,2$, and:
    \begin{enumerate}
    \item If $M_0$ is a finite graph, then no cross edges are present between $M_1 \backslash M_0$ and $M_2 \backslash M_0$.
    \item If $M_0$ is infinite, all possible cross edges are present between $M_1 \backslash M_0$ and $M_2 \backslash M_0$.
    \end{enumerate}

    One can similarly construct an independence relation with existence and uniqueness that fails to be monotonic (use the isomorphism type of $M_2$ to decide whether all or no cross edges should be included). It is also easy to construct examples that have existence, but not uniqueness, or uniqueness but not existence, see \cite[4.16]{bgkv-apal}.
  \item\label{categ-example} In all the examples given so far, the morphisms were monomorphisms. This need not be the case, however: in any accessible category with pushouts, the class of \emph{all} squares form a stable independence notion. In this sense, stable independence generalizes pushouts. We explore this theme further in the next two sections.
\end{enumerate}
\end{exams}

\section{Coregular categories and effective unions}\label{coreg-sec}

Recall that a monomorphism (respectively, epimorphism) in a category is \emph{regular} if it is an equalizer (respectively, coequalizer) of a pair of morphisms (see \cite[7.56]{joy-of-cats}). 

\begin{notation}\label{kreg-not} Let $\ck$ be a category.  We denote by $\ck_{reg}$ the full subcategory of $\ck$ whose morphisms are precisely the regular morphisms of $\ck$.\end{notation}

\begin{remark}\label{pullback-regular}
  In $\mu$-AECs with disjoint amalgamation (and more generally in any category where spans can be completed to pullback squares), every monomorphism is regular.
\end{remark}

A category $\ck$ is called \emph{regular} if it has finite limits, coequalizers of kernel pairs and regular epimorphisms are stable under pullbacks (see \cite{barr-exact}). Coregularity is the dual of this notion.  So, in particular, a locally presentable category $\ck$ is coregular if and only if regular monomorphisms are stable under pushouts. In this case, we have the factorization system (epimorphism, regular monomorphism) in $\ck$. This implies that $\ck_{reg}$ is quite a well-behaved category:

\begin{lemma}\label{lp} Let $\ck$ be a coregular locally $\lambda$-presentable category. Then $\ck_{reg}$ is locally $\lambda$-multipresentable.
\end{lemma}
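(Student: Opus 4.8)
The plan is to verify directly the two defining properties of local $\lambda$-multipresentability: that $\ck_{reg}$ is $\lambda$-accessible and that it has all multicolimits. Throughout I would exploit the factorization system $(\ce,\cm)$ on $\ck$ supplied by coregularity, with $\ce$ the class of epimorphisms and $\cm$ that of regular monomorphisms (so $\ck_{reg}$ is the wide subcategory whose morphism class is $\cm$). First I would record the calculus of this system: $\cm$ is closed under composition and contains the isomorphisms, with $\ce\cap\cm$ exactly the isomorphisms; regular monos are stable under pushout (coregularity) and under pullback (a pullback of an equalizer is an equalizer); and the cancellation law ``if $g=mk$ with $g,m\in\cm$ then $k\in\cm$'' holds, by uniqueness of factorizations. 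Two further standard inputs will be crucial: in a coregular category every regular mono is the equalizer of its cokernel pair, and in a locally $\lambda$-presentable category $\lambda$-directed colimits commute with finite limits and the category is co-wellpowered.

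The heart of the proof --- and the step I expect to be the main obstacle --- is showing that $\ck_{reg}$ has $\lambda$-directed colimits, computed as in $\ck$. Given a $\lambda$-directed diagram in $\ck_{reg}$ with colimit $C$ and legs $c_j\colon D_j\to C$ in $\ck$, I would prove both that each $c_j$ is regular and that the comparison map $u\colon C\to N$ to any other cocone $(n_j)$ of regular monos is regular, by a single device. The cokernel pair of $c_j$ (resp.\ of $u$) is the $\lambda$-directed colimit of the cokernel pairs of the transition maps (resp.\ of the $n_j$), since pushouts commute with colimits; each of the latter is the equalizer of its own cokernel pair by coregularity; and since $\lambda$-directed colimits commute with equalizers, $c_j$ (resp.\ $u$) is exhibited as the equalizer of its cokernel pair, hence regular. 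Uniqueness of $u$ is inherited from $\ck$, so $C$ is the colimit in $\ck_{reg}$ and the inclusion $\ck_{reg}\hookrightarrow\ck$ preserves $\lambda$-directed colimits.

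With directed colimits in hand I would establish $\lambda$-accessibility. Using the cancellation law, any object $\lambda$-presentable in $\ck$ stays $\lambda$-presentable in $\ck_{reg}$: a map out of it into a directed colimit factors through a leg in $\ck$, and that factorization is automatically regular. For density I would replace the canonical $\lambda$-directed diagram of $\lambda$-presentables over an object $M$ by its regular parts: factoring each $A\to M$ as $A\twoheadrightarrow A'\hookrightarrow M$ yields $\lambda$-presentable regular subobjects (quotients of $\lambda$-presentables being $\lambda$-presentable), and the cancellation law shows these form a cofinal, $\lambda$-directed family whose colimit in $\ck_{reg}$ is $M$. A set of representatives then serves as the required strong generator.

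Finally, for multicolimits I would take the colimit $C$ of a diagram $D$ in $\ck$ and form the family of cocones $(D_j\xrightarrow{q c_j}C')$ indexed by those epimorphisms $q\colon C\to C'$ for which every $q c_j$ lies in $\cm$; this is a set by co-wellpoweredness. Given any cocone $(N,n_j)$ in $\ck_{reg}$, factoring the induced $u\colon C\to N$ as $C\xrightarrow{e}C'\xrightarrow{m}N$ produces, via the cancellation law, a member of this family together with the connecting regular mono $m$, while uniqueness of the $(\ce,\cm)$-factorization (and the fact that an epimorphism in $\cm$ is invertible) shows that this member and this connecting map are the unique ones. Hence the family is multiinitial among $\ck_{reg}$-cocones on $D$, so $\ck_{reg}$ has all multicolimits and is therefore locally $\lambda$-multipresentable.
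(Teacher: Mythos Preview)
Your argument is correct. The $\lambda$-accessibility half runs parallel to the paper's: both factor the canonical $\lambda$-directed diagram of $\lambda$-presentables over an object $M$ through the (epimorphism, regular monomorphism) system to obtain a $\lambda$-directed family of regular subobjects that are $\lambda$-presentable in $\ck_{reg}$. One caution: your parenthetical ``quotients of $\lambda$-presentables being $\lambda$-presentable'' is false in $\ck$ in general (take $R \twoheadrightarrow R/I$ for $I$ a non-finitely-generated ideal of a non-Noetherian ring $R$), but it \emph{is} true in $\ck_{reg}$, which is what you need and what your cancellation argument actually establishes; the paper cites \cite[1.69]{adamek-rosicky} for exactly this. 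For closure under $\lambda$-directed colimits the paper simply cites \cite[1.62]{adamek-rosicky} and then, separately, invokes regularity of $\lambda$-pure monomorphisms together with \cite[2.34]{adamek-rosicky} to obtain accessibility at some cardinal before refining to $\lambda$; your cokernel-pair argument is a self-contained replacement for both steps.

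The genuine divergence is in how local $\lambda$-multipresentability is completed. You build multicolimits explicitly: form the colimit $C$ in $\ck$, index the multicolimit by those epimorphic quotients of $C$ along which the colimit cocone becomes a cocone of regular monomorphisms, and use co-wellpoweredness for smallness and uniqueness of (epi, regular mono)-factorizations for the multi-universal property. The paper instead checks that $\ck_{reg}$ inherits \emph{connected limits} from $\ck$---equalizers trivially, and wide pullbacks because regular monomorphisms, as the right class of a factorization system, are closed under the relevant operations---and then invokes the characterization of locally $\lambda$-multipresentable categories as $\lambda$-accessible categories with connected limits. Your route is more hands-on and exhibits the multicolimits concretely; the paper's is shorter, leans on an external characterization, and yields closure of $\ck_{reg}$ under connected limits as a useful byproduct.
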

\begin{proof} Following \cite[1.62]{adamek-rosicky}, $\ck_{reg}$ is closed under $\lambda$-directed
colimits in $\ck$. Since $\lambda$-pure monomorphisms are regular (\cite[2.31]{adamek-rosicky}), $\ck_{reg}$ is accessible by \cite[2.34]{adamek-rosicky}.
We will prove that each object $K$ of $\ck$ is a $\lambda$-directed colimit of $\lambda$-presentable objects
in $\ck_{reg}$. Let $a_i:A_i\to K$ be a $\lambda$-directed colimit of $\lambda$-presentable objects $A_i$ in $\ck$. Since $\ck$ has (epimorphism, regular monomorphism)-
factorizations, we take $a_i=a_i''a_i'$ to be a (epimorphism, regular monomorphism) factorization of $a_i$. Then $a_i'':B_i\to K$ is
a $\lambda$-directed colimit of regular monomorphisms. Analogously as in \cite[1.69]{adamek-rosicky}(i), objects $B_i$ are $\lambda$-presentable in $\ck_{reg}$.
Thus $\ck_{reg}$ is $\lambda$-accessible.

$\ck_{reg}$ is clearly closed under equalizers in $\ck$. Let $p_i:P\to M_i$ be a wide pullback of regular monomorphisms
$f_i:M_i\to M$. Since regular monomorphisms form the right part of a factorization system, the composition $f_ip_i$ is a regular monomorphism. For the same reason,
the $p_i$ are regular monomorphisms. Thus $\ck_{reg}$ has connected colimits and, since it is $\lambda$-accessible, it is locally $\lambda$-multipresentable.
\end{proof}


We will use the following important fact:

\begin{fact}[\cite{ringel}]\label{ringel-fact}
In a coregular locally presentable category, pushouts of regular monomorphisms are pullbacks.  

In fact, this is valid in any category where pushouts of regular monomorphisms are regular monomorphisms.
\end{fact}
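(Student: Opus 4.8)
The plan is to verify the pullback property directly from the definition, exploiting the fact that one leg of the span is an equalizer. Consider a pushout square with top $f\colon A\to B$ and left $g\colon A\to C$, producing the right map $g'\colon B\to D$ and bottom map $f'\colon C\to D$, where both $f$ and $g$ are regular monomorphisms. By hypothesis the pushout of a regular monomorphism is again a regular monomorphism, so both $g'$ (the pushout of $f$) and $f'$ (the pushout of $g$) are regular monomorphisms, and in particular monomorphisms. To show the square is a pullback I must show that for every pair $b\colon X\to B$, $c\colon X\to C$ with $g'b=f'c$ there is a unique $a\colon X\to A$ with $fa=b$ and $ga=c$. Uniqueness is immediate since $f$ is a monomorphism, so the whole problem reduces to producing $a$, i.e.\ to showing that $b$ factors through $f$.

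Since $f$ is a regular monomorphism, write it as the equalizer of a pair $u,v\colon B\to E$, so that $f$ is universal with $uf=vf$; set $w:=uf=vf$. Showing that $b$ factors through $f$ then amounts to proving $ub=vb$. The key idea---and the step I expect to be the main obstacle---is to manufacture a test object on which the pushout can ``see'' the difference between $u$ and $v$. For this I would form the auxiliary pushout of the regular monomorphism $g\colon A\to C$ along $w\colon A\to E$, obtaining $\bar g\colon E\to H$ and $w'\colon C\to H$ with $\bar g w=w'g$; crucially, $\bar g$ is again a regular monomorphism (hence a monomorphism) by the same hypothesis. The pairs $(\bar g u, w')$ and $(\bar g v, w')$ each form a cocone on the original span, since $(\bar g u)f=\bar g w=w'g$ and likewise for $v$. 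The universal property of $D$ then yields maps $\Phi,\Psi\colon D\to H$ with $\Phi g'=\bar g u$, $\Psi g'=\bar g v$, and $\Phi f'=\Psi f'=w'$.

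Finally I would evaluate on the test maps: from $g'b=f'c$ we obtain $\bar g u b=\Phi g'b=\Phi f'c=w'c=\Psi f'c=\Psi g'b=\bar g v b$, and since $\bar g$ is a monomorphism this forces $ub=vb$. Hence $b$ factors through the equalizer $f$, giving $a\colon X\to A$ with $fa=b$; applying $f'$ and using $g'f=f'g$ shows $f'(ga)=g'(fa)=g'b=f'c$, so that $ga=c$ because $f'$ is a monomorphism. This completes the verification. The only place the hypothesis is used is in guaranteeing that $g'$, $f'$, and the auxiliary map $\bar g$ are monomorphisms; in a coregular locally presentable category this is automatic, since there regular monomorphisms are stable under pushout, so the first assertion of the statement follows at once from the second. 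The one genuinely delicate point is the choice of the auxiliary pushout $H$ and the resulting comparison maps $\Phi,\Psi$, which is exactly what lets the pushout detect the equalizer condition; everything else is formal manipulation of monomorphisms and universal properties.
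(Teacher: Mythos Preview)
Your argument is correct. The paper does not give its own proof of this fact; it is stated with a bare citation to Ringel, so there is nothing to compare against beyond noting that you have supplied a clean, self-contained verification where the paper defers to the literature.

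A couple of minor remarks. First, your parenthetical labels are swapped: $g'$ is the pushout of $g$ (not of $f$) and $f'$ is the pushout of $f$; this does not affect the argument since you assume both $f$ and $g$ are regular monomorphisms anyway. Second, and relatedly, your proof uses that \emph{both} legs of the span are regular monomorphisms (you need $g$ regular so that its pushout $\bar g$ along $w$ is a monomorphism). The classical Ringel result actually needs only one leg to be a regular monomorphism whose pushout is again a monomorphism, but the paper only ever invokes the fact for spans in $\ck_{reg}$, where both legs are regular monos, so your version is entirely sufficient for the applications here. Finally, your construction tacitly assumes the auxiliary pushout of $g$ along $w$ exists; this is automatic in the locally presentable setting, and in the more general second clause it is implicit in the hypothesis that pushouts of regular monomorphisms exist and are regular.
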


Note, however, that pushouts of regular monomorphisms in $\ck$ need not be pushouts in $\ck_{reg}$---the induced map from the pushout will not even be a monomorphism, in general, let alone a regular monomorphism.  To ask that this be the case is precisely to ask that $\ck$ have {\it effective unions}. The existence of effective unions is an exactness property (introduced by Barr \cite{effective-unions}) satisfied by both Grothendieck abelian categories and Grothendieck toposes and which is strong enough to ensure the existence of enough injective objects---see Remark~\ref{cof-rmk} below.

\begin{defin}
  A locally presentable category $\ck$ \emph{has effective unions} if whenever we have a pullback
  
$$
\xymatrix@=3pc{
M_1 \ar[r]^{} & M_3 \\
M_0 \ar [u]^{} \ar [r]_{} &
M_2 \ar[u]_{}
}
$$

in $\ck_{reg}$, and the pushout

$$
\xymatrix@=3pc{
M_1 \ar[r]^{} & P \\
M_0 \ar [u]^{} \ar [r]_{} &
M_2 \ar[u]_{}
}
$$

in $\ck$, the induced morphism $P\to M_3$ is a regular monomorphism.
\end{defin}

\begin{remark}
If $\ck$ is a coregular locally presentable category then any morphism in $\ck_{reg}$ is a regular monomorphism. This follows from the fact that pushout squares in $\ck$ will be pullbacks (Fact \ref{ringel-fact}) and Remark \ref{pullback-regular}.
\end{remark}

\begin{remark}\label{cof-rmk}
  Effective unions are closely tied to the existence of enough injectives (see e.g.\ Chapter 4 in \cite{adamek-rosicky}): let $\ck$ be a coregular locally presentable category such that regular monomorphisms are closed under directed colimits 
  (in the sense of \cite[2.1(2)]{uq-cellular-v2-toappear}). Then $\ck_{reg}$ has directed colimits. Moreover, if $\ck$ has effective unions then regular monomorphisms are cofibrantly generated (in the sense given at the beginning of \cite[p.~7]{uq-cellular-v2-toappear}; the proof is the same as that of \cite[1.12]{beke-sheafifiable}). In particular, there is a set $\cs$ of regular monomorphisms such that an object injective with respect to any $s\in\cs$ is injective to all regular monomorphisms. Consequently, these injective objects form an accessible category (see \cite[4.7]{adamek-rosicky}).
\end{remark}

\begin{exams}\label{effective-union-examples} \
  \begin{enumerate}
  \item\label{effective-1} Both Grothendieck toposes and Grothendieck abelian  categories are locally presentable coregular categories having effective unions and regular monomorphisms closed under directed colimits (see \cite{effective-unions}). In particular, they include categories of $R$-modules and presheaf categories $\Set^{\cx}$. The latter include the category of multigraphs.
  \item The category $\Gra$ of graphs is locally presentable and coregular but it does not have effective unions. It suffices
to take the pullback of two vertices of an edge, where the pushout consists two vertices without any edge. Thus the embedding 
of this pushout to the edge is not regular (although, we note, it would be regular in multigraphs). See also Example \ref{indep-examples}(\ref{graph-example}).
  \item The category of groups and the category of Boolean algebras are locally presentable and coregular. Regular monomorphisms coincide with monomorphisms and are closed under directed colimits. But groups do not have enough injectives, and the injectives in Boolean algebras are complete Boolean algebras: these do not form an accessible category. Thus neither groups nor Boolean algebras have effective unions (see Remark \ref{cof-rmk}).
  \item  The category $\Ban$ of Banach spaces (with linear contractions) is locally presentable and coregular. The regular monomorphisms are isometries and are closed under directed colimits. However $\Ban$ does not have effective unions: if it did, by Remark \ref{cof-rmk}, regular monomorphisms would be cofibrantly generated. This contradicts \cite[3.1(2)]{uq-cellular-v2-toappear}.  We provide an alternate proof of this fact in Remark~\ref{banachord-rmk}, exploiting the connection between stable independence and the failure of the order property.
   \item The category $\Hilb$ of Hilbert spaces (with linear isometries) is accessible. Recall that Hilbert spaces are precisely the Banach spaces whose norm satisfies the parallelogram identity.  By the note above, $\Hilb$ is a (full) subcategory of $\Ban_{reg}$.  Pullbacks in $\Hilb$ exist and are calculated as
for Banach spaces.  If $f:V\to W$ is a morphism 
in $\Hilb$ then $W\cong V\oplus V^\perp$ where $V^\perp$ is the orthogonal complement of $f(V)$ in $W$. Thus a typical span of maps consists of 
$f:V\to V\oplus W_1$ and $g:V\to V\oplus W_2$, with pushout $V\oplus W_1\oplus W_2$. Since a general pullback has the form
$$
\xymatrix@=3pc{
V\oplus W_1 \ar[r]^{} & V\oplus W_1\oplus W_2\oplus W_3 \\
V \ar [u]^{} \ar [r]_{} &
V\oplus W_2 \ar[u]_{}
}
$$
and the induced map $V\oplus W_1\oplus W_2\to V\oplus W_1\oplus W_2\oplus W_3$ is clearly an equalizer, $\Hilb$ has effective unions.
\end{enumerate}
\end{exams}

\section{Stable independence and effective unions}\label{effect-sec}

\begin{thm}\label{indep}
Let $\ck$ be a coregular locally presentable category. If $\ck$ has effective unions, then $\ck_{reg}$ has a stable independence relation.
\end{thm}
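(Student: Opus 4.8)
The plan is to define $\nf$ so that an amalgamation diagram in $\ck_{reg}$ is $\nf$-independent exactly when it is (equivalent to) a pushout of regular monomorphisms computed in $\ck$. Concretely, for a span $(f_1,f_2)$ of regular monomorphisms I would take its pushout $P$ in $\ck$; by coregularity the legs $p_1,p_2$ are again regular monomorphisms, so $(f_1,f_2,p_1,p_2)$ is a square in $\ck_{reg}$, and I declare a general amalgam $(g_1,g_2)$ into $M_3$ independent iff the induced map $t\colon P\to M_3$ is a regular monomorphism; equivalently, iff $(g_1,g_2)$ is $\sim$-equivalent to $(p_1,p_2)$. That these two formulations agree uses the (epimorphism, regular monomorphism) factorization system: if $t$ factors through a common extension by regular monomorphisms, essential uniqueness of factorizations forces $t$ itself to be a regular monomorphism. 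This class is closed under $\sim$ by construction, so it is an independence relation. The engine of the whole argument is the characterization: \emph{a commutative square of regular monomorphisms is $\nf$-independent if and only if it is a pullback in $\ck$.} The forward direction is immediate from Fact~\ref{ringel-fact} (the pushout square is a pullback) together with the observation that pulling back along the monomorphism $t$ does not change the pullback; the reverse direction is precisely the hypothesis of effective unions.

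With this characterization in hand, most axioms are quick. Existence holds because every span of regular monomorphisms has a pushout with regular monomorphism legs (coregularity), and this pushout square is a pullback by Fact~\ref{ringel-fact}, hence independent; in particular $\ck_{reg}$ has amalgamation. Symmetry is clear, since the pushout of a span does not depend on the order of its legs and the induced map $t$ is unchanged, so $\nf^d=\nf$. Uniqueness follows because any two independent amalgams of a fixed span each receive the canonical pushout amalgam by a regular monomorphism, hence each is $\sim$-equivalent to it, and so to one another. For transitivity it suffices, by symmetry, to treat right transitivity, and here the pullback characterization does the work: translating the two given independent squares into pullbacks, the pasting law for pullbacks shows the outer rectangle is a pullback (note the relevant composites are regular monomorphisms, the right class of the factorization system being closed under composition), which translates back, via effective unions, into independence of the composite. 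Monotonicity and base-monotonicity are then free from Lemmas~\ref{monot-lem} and~\ref{base-monot-lem}.

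The one substantial point is accessibility of $\cknf$; by symmetry it is enough to treat the right-handed case, i.e.\ to show $\cknf$ itself is accessible. Using the characterization, $\cknf$ is exactly the category whose objects are regular monomorphisms and whose morphisms are pullback squares of regular monomorphisms, i.e.\ the cartesian arrows of the codomain projection of the locally $\lambda$-multipresentable category $\ck_{reg}$ (Lemma~\ref{lp}). Fix $\lambda$ with $\ck$ locally $\lambda$-presentable; since $\lambda$-directed colimits and all limits are computed as in the canonical presheaf embedding of $\ck$, finite limits commute with $\lambda$-directed colimits in $\ck$. I would then show $\cknf$ is closed under $\lambda$-directed colimits formed objectwise in $\ck^2$: the colimit arrow $g$ is again a regular monomorphism, because a regular monomorphism is the equalizer of its cokernel pair and both the cokernel pair (a colimit) and that equalizer (a finite limit) commute with the $\lambda$-directed colimit; and the colimit cocone consists of pullback squares because pullbacks commute with $\lambda$-directed colimits, so the limiting cone over each cartesian leg remains cartesian. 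Finally, any arrow with $\lambda$-presentable codomain is $\lambda$-presentable in $\cknf$, as a morphism of $\cknf$ is determined by its codomain component (the other leg being forced, since $g$ is monic) and factorizations of that component lift cartesianly; and an arbitrary object $g\colon N_1\to N_2$ is recovered as the $\lambda$-directed colimit of the arrows $P_d\times_{N_2}N_1\to P_d$ obtained by pulling $g$ back along a $\lambda$-directed presentation $N_2=\colim_d P_d$ by $\lambda$-presentables. These pullbacks are regular monomorphisms with $\lambda$-presentable codomain, and up to isomorphism they range over a set by well-poweredness of $\ck$. Hence $\cknf$ is $\lambda$-accessible.

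Assembling the pieces, $\nf$ is symmetric, transitive, has existence and uniqueness, and is accessible, so it is a stable independence relation in the sense of Definition~\ref{stable-def}, which proves the theorem. I expect the accessibility step to be the main obstacle: the two colimit-commutation facts (stability of regular monomorphisms and of cartesian squares under $\lambda$-directed colimits) must be set up carefully against the chosen $\lambda$, and one must verify that $\lambda$-presentability in $\cknf$ is controlled by the codomain alone, so that a genuine generating set of small objects exists rather than merely a proper class of candidates.
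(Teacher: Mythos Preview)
Your definition of $\nf$ (the induced map from the pushout is a regular monomorphism) is the paper's $\nf^\ast$ from Definition~\ref{nf1-def}, and under effective unions it coincides with the paper's choice of ``pullback square,'' exactly as you argue. The verifications of existence, uniqueness, symmetry, and transitivity are essentially identical to the paper's (the paper works directly with the pullback characterization throughout; your detour through the pushout is equivalent and arguably clarifies where effective unions enter).

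The substantive difference is accessibility. The paper does not spell out a direct argument but instead invokes Theorem~\ref{accessible-charact}: local character holds because the nonforking base of a type is computed by a pullback (bounded in size via the L\"owenheim--Skolem--Tarski axiom of the associated $\mu$-AEC), and the witness property follows from full tameness and shortness of $\ck_{reg}$ (Example~\ref{nfastwitness}). Your direct presentability argument is a legitimate alternative, and in spirit matches what is done for $\nf^\ast$ in Theorem~\ref{nf1-thm}.

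There is, however, a gap in your generation step. When you write $N_2=\colim_d P_d$ ``by $\lambda$-presentables'' and pull $g$ back, the resulting diagram $d\mapsto (P_d\times_{N_2}N_1\to P_d)$ must be a diagram \emph{in $\cknf$}: its transition morphisms must be pullback squares whose horizontal legs are regular monomorphisms. If you take the presentation in $\ck$, the transition maps $P_d\to P_{d'}$ are arbitrary morphisms of $\ck$, and the induced squares are not morphisms of $\cknf$ at all. You must instead take the presentation in $\ck_{reg}$, which is available by Lemma~\ref{lp}: every object of $\ck_{reg}$ is a $\lambda$-directed colimit of objects $\lambda$-presentable in $\ck_{reg}$, with regular-monomorphism transition and cocone maps. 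Correspondingly, your claim that ``any arrow with $\lambda$-presentable codomain is $\lambda$-presentable in $\cknf$'' should be read with $\lambda$-presentability in $\ck_{reg}$, not $\ck$; the factorization argument you sketch still goes through because $\lambda$-directed colimits in $\cknf$ are computed in $\ck^2$ and hence in $\ck_{reg}$ (Remark~\ref{acc2} and the proof of Lemma~\ref{lp}). With this correction your direct proof is sound.
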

\begin{proof}
Put $\nfs{M_0}{M_1}{M_2}{M_3}$ if and only if the square
$$
\xymatrix@=3pc{
M_1 \ar[r]^{} & M_3 \\
M_0 \ar [u]^{} \ar [r]_{} &
M_2 \ar[u]_{}
}
$$
is a pullback. We check that this satisfies all the axioms of stable independence. If $\nfs{M_0}{M_1}{M_2}{M_3}$ and $M_3\to M'_3$ then
$\nfs{M_0}{M_1}{M_2}{M'_3}$. Given a commutative square
$$
\xymatrix@=3pc{
M_1 \ar[r]^{} & M''_3 \\
M_0 \ar [u]^{} \ar [r]_{} &
M_2 \ar[u]_{}
}
$$
and $M''_3\to M_3$ such that $\nfs{M_0}{M_1}{M_2}{M_3}$, then $\nfs{M_0}{M_1}{M_2}{M''_3}$. This shows that $\nf$ is closed under equivalence of amalgams.  We have used here that if $gf$ is a regular monomorphism, then $f$ is a regular monomorphism (see \cite[2.1.4]{freyd-kelly} -- recall that in any coregular locally presentable category we have the factorization system (epimorphism, regular monomorphism)). 

Given a span $M_1\leftarrow M_0\to M_2$, we form the pushout
$$
\xymatrix@=3pc{
M_1 \ar[r]^{} & M_3 \\
M_0 \ar [u]^{} \ar [r]_{} &
M_2 \ar[u]_{}
}
$$
Following Fact \ref{ringel-fact}, this square is a pullback. Thus $\nfs{M_0}{M_1}{M_2}{M_3}$, which yields the existence property.

In order to prove the uniqueness property, consider $\nfs{M_0}{M_1}{M_2}{M'_3}$ and $\nfs{M_0}{M_1}{M_2}{M''_3}$ with the same
span $M_1\leftarrow M_0\to M_2$. Form the pushout 
$$
\xymatrix@=3pc{
M_1 \ar[r]^{} & P \\
M_0 \ar [u]^{} \ar [r]_{} &
M_2 \ar[u]_{}
}
$$
and take the induced regular monomorphisms $P\to M'_3$ and $P\to M''_3$. Then the pushout
$$
\xymatrix@=3pc{
M'_3 \ar[r]^{} & M_3 \\
P \ar [u]^{} \ar [r]_{} &
M''_3 \ar[u]_{}
}
$$

amalgamates the starting pullbacks.

The transitivity and symmetry properties are also easy to check. We can prove accessibility directly or use the soon to be proven Theorem \ref{accessible-charact}: local character holds since the nonforking base is simply given by the pullback, and the witness property is also a straightforward consequence of the definition.
\end{proof} 

\begin{remark}\label{indep1} \
  \begin{enumerate}
  	\item We note that the assumption of local presentability is largely a matter of convenience, and the argument will go through under weaker assumptions.  In particular, the category need only be accessible with pullbacks and pushouts of monomorphisms.
    \item Effective unions were only needed for the uniqueness property. On the other hand, if the independence relation defined by being a pullback square satisfies uniqueness, then the category has effective unions. Consider
$\nfs{M_0}{M_1}{M_2}{M_3}$ and form the pushout
$$
\xymatrix@=3pc{
M_1 \ar[r]^{} & P \\
M_0 \ar [u]^{} \ar [r]_{} &
M_2 \ar[u]_{}
}
$$
Since $\nfs{M_0}{M_1}{M_2}{P}$ (by \ref{ringel-fact}), we have an amalgam $M_3\to M\leftarrow P$ whose right leg is the composition
$P\to M_3\to M$. Thus $P\to M_3$ is a regular monomorphism.
\item The converse of Theorem \ref{indep} fails: Example \ref{locally-finite-example} shows that the category $\ck$ of graphs whose vertices have degree at most $k$ (for fixed $k < \omega$) is coregular, locally presentable, and has a stable independence relation in $\ck_{reg}$, but does \emph{not} have effective unions.
\item We show in Theorem \ref{indep-pullback} that in any $\mu$-AEC, independent squares over sufficiently saturated models are pullbacks.
\item Let $\ck$ be a coregular locally presentable category having effective unions. Then in $\ck_{reg}$, given a span $(f_1,f_2)$,
 exactly one instance of a multipushout of $f_1$ and $f_2$ is a pullback. This follows from Remark \ref{multi} and Theorem \ref{indep}.
  \end{enumerate}
\end{remark}

\begin{exams}\label{indep-examples1}
Both Grothendieck toposes and Grothendieck  abelian categories have stable independence relations. This follows from Theorem \ref{indep}
and Example \ref{effective-union-examples}(\ref{effective-1}) and subsumes \ref{indep-examples}(\ref{indep-ex-1}) and (\ref{indep-ex-2}). In fact, (\ref{indep-ex-2}) is valid for $R$-modules in general. This was known already for complete first-order theories of modules (see \cite{prest}): we have produced an alternate proof of this fact. Similarly, the category of Hilbert spaces (with linear isometries) has a stable independence notion---as $\Hilb$ is accessible but not locally presentable, we must actually invoke the weakening mentioned in \ref{indep}(1) above. This was also known from the model-theoretic analysis of the corresponding continuous first-order theory, see for example \cite{iovino-forking-banach}.
\end{exams}

In the rest of this section, we show that even without effective unions, we can still define an independence notion that satisfies all the axioms of stable independence except accessibility.

\begin{defin}\label{nf1-def}
  Let $\ck$ be a coregular locally presentable category.  Define $\nf^\ast$ by taking commutative squares in $\ck_{reg}$
  
$$
\xymatrix@=3pc{
M_1 \ar[r]^{} & M_3 \\
M_0 \ar [u]^{} \ar [r]_{} &
M_2 \ar[u]_{}
}
$$
such that the induced morphism from the pushout
$$
\xymatrix@=3pc{
M_1 \ar[r]^{} & P \\
M_0 \ar [u]^{} \ar [r]_{} &
M_2 \ar[u]_{}
}
$$
is a regular monomorphism.
\end{defin}
\begin{remark}
These are precisely \textit{effective pullback squares}

$$
\xymatrix@=3pc{
M_1 \ar[r]^{} & M_3 \\
M_0 \ar [u]^{} \ar [r]_{} &
M_2 \ar[u]_{}
}
$$

in the sense that the unique morphism $P\to M_3$ is a regular monomorphism. In $\Gra$, $\nf^\ast$ is the relation $\nf$ mentioned in Example \ref{indep-examples}(\ref{graph-example}).
\end{remark}

If $\ck$ has effective unions then $\nf^\ast=\nf$ where $\nf$ is the relation from Theorem \ref{indep}. This uses the fact that pushouts in $\ck$ are pullbacks. 

Moreover, by the next result, together with Theorem \ref{canon-thm}, if $\ck_{reg}$ has chain bounds then whenever $\nf$ is \textit{any} stable independence relation on $\ck_{reg}$ we have that $\nf = \nf^\ast$.  

\begin{thm}\label{nf1-thm}
  Let $\ck$ be a coregular locally presentable categories and let $\nf^\ast$ be as in Definition \ref{nf1-def}.

  Then:
  \begin{enumerate}
  \item $\nf^\ast\subseteq\nf$ (where $\nf$ is from Definition \ref{indep}).
  \item $\nf^\ast$ is invariant, monotonic, symmetric, transitive, has existence and uniqueness.
  \item If $\ck$ is locally $\lambda$-presentable then the category $\cl$ of $\nf^\ast$-independent squares in $\ck_{reg}$ (from Definition \ref{k-nf-def}) is closed under $\lambda$-directed colimits in $\ck^2$.
  \item If $\ck$ is locally $\lambda$-presentable, $\ck^\square$ is the category of commutative squares in $\ck$, and $\ck^\boxdot$ is the category of effective pullback squares in $\ck$, then $\ck^\boxdot$ is $\lambda$-accessible with $\lambda$-presentable objects being squares of $\lambda$-presentable objects in $\ck$.
  \end{enumerate}
\end{thm}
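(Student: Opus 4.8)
The plan is to reduce all four items to two structural features of a coregular locally presentable category $\ck$: pushouts of regular monomorphisms are again regular monomorphisms and are pullbacks (Fact \ref{ringel-fact}), and the regular monomorphisms, being the right half of the factorization system (epi, regular mono), are closed under composition. For (1), suppose the induced map $r\colon P\to M_3$ of an $\nf^\ast$-square is a regular monomorphism. By Fact \ref{ringel-fact} the pushout square with corner $P$ is a pullback, so $M_0=M_1\times_P M_2$; since $r$ is monic, a cone over $M_1\to M_3\leftarrow M_2$ is the same as a cone over $M_1\to P\leftarrow M_2$, whence $M_1\times_{M_3}M_2=M_1\times_P M_2=M_0$ and the square is a pullback. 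This gives $\nf^\ast\subseteq\nf$, the pullback relation of Theorem \ref{indep}.

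For (2), symmetry is immediate since the defining pushout is symmetric in $M_1,M_2$, and existence holds because the pushout square of a span of regular monomorphisms has induced map $\id_P$, which is a regular monomorphism (its legs are regular monomorphisms by coregularity). For uniqueness, given $\nfs{M_0}{M_1}{M_2}{M_3'}$ and $\nfs{M_0}{M_1}{M_2}{M_3''}$ over a common span, form the pushout $P$; the definition supplies regular monomorphisms $P\to M_3'$ and $P\to M_3''$, and the pushout of $M_3'\leftarrow P\to M_3''$ (whose legs are again regular monomorphisms, hence lie in $\ck_{reg}$) witnesses the equivalence of the two amalgams. The one substantial point is transitivity. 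For right transitivity I would set $P=M_1+_{M_0}M_2$, $Q=M_3+_{M_2}M_4$ and $R=M_1+_{M_0}M_4$; pushout pasting gives $R\cong P+_{M_2}M_4$, and then a second (vertical) pasting against the regular monomorphism $p\colon P\to M_3$ identifies the induced map $R\to Q$ as the pushout of $p$ along $P\to R$, so $R\to Q$ is a regular monomorphism. Composing with the regular monomorphism $Q\to M_5$ shows the induced map $R\to M_5$ is a regular monomorphism, i.e.\ the outer rectangle is $\nf^\ast$-independent; left transitivity follows by symmetry. Finally, monotonicity and invariance require no separate argument: monotonicity follows from Lemma \ref{monot-lem}, and invariance from right transitivity together with existence, as both are now available.

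For (3), $\lambda$-directed colimits in $\ck^2$ are computed cornerwise, and regular monomorphisms are closed under $\lambda$-directed colimits (as in the proof of Lemma \ref{lp}), so the colimit $g=\colim_d g_d$ of a diagram in $\cl$ is again a regular monomorphism, hence an object of $\cl$. To see that each coprojection square is $\nf^\ast$-independent I would use that pushouts commute with $\lambda$-directed colimits: writing the diagram as $g_d\colon A_d\to B_d$, the map $(B_d+_{A_d}A)\to B$ is the $\lambda$-directed colimit over $d'\ge d$ of the maps $(B_d+_{A_d}A_{d'})\to B_{d'}$, each a regular monomorphism because the diagram morphism $d\to d'$ lies in $\cl$; a $\lambda$-directed colimit of regular monomorphisms is a regular monomorphism. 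The identical computation applied to a competing $\cl$-cocone shows the comparison map is $\nf^\ast$-independent, so the $\ck^2$-colimit is in fact the colimit in $\cl$.

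For (4), $\ck^\square=\ck^{\mathbf 2\times\mathbf 2}$ is locally $\lambda$-presentable, and (as for any functor category over a finite shape) its $\lambda$-presentable objects are precisely the squares all of whose corners are $\lambda$-presentable in $\ck$. The argument of (3), transported to the ``square'' packaging, shows that $\ck^\boxdot$ is closed under $\lambda$-directed colimits in $\ck^\square$; hence any effective pullback square with $\lambda$-presentable corners is $\lambda$-presentable in $\ck^\square$ and therefore in $\ck^\boxdot$, and such squares form a set. Granting that every effective pullback square is a $\lambda$-directed colimit of these small ones, $\ck^\boxdot$ is $\lambda$-accessible, and the stated description of its $\lambda$-presentable objects follows by the usual retract argument: a $\lambda$-presentable object factors $\id$ through, hence is a retract of, one of the small squares in its presenting colimit, and evaluation at each corner carries this to a retract of a $\lambda$-presentable object of $\ck$, which is again $\lambda$-presentable. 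I expect producing the presenting colimit to be the main obstacle. The plan there is to mimic the proof of Lemma \ref{lp}: write the given square as a $\lambda$-directed colimit of $\lambda$-presentable squares in $\ck^\square$ and then correct these approximations via the (epi, regular mono) factorization system, forcing the edges, and the comparison maps out of the relevant pushouts, to be regular monomorphisms while keeping the corners $\lambda$-presentable. The delicate part will be carrying out this ``regularization'' simultaneously for the whole square---the two span legs, the pushout $P$, and the map $P\to M_3$---without enlarging the corners beyond $\lambda$-presentability; alternatively, one can verify that $\ck^\boxdot$ is closed under $\lambda$-pure subobjects in $\ck^\square$ and invoke \cite[2.34]{adamek-rosicky} for accessibility, afterwards identifying the $\lambda$-presentable objects as above.
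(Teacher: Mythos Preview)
Your arguments for (1)--(3) are correct and essentially coincide with the paper's. The transitivity argument is exactly the paper's pushout-pasting computation, merely with different letters for the intermediate pushouts. One small omission: before invoking Lemma \ref{monot-lem} and the invariance lemma, you should verify directly that $\nf^\ast$ is closed under $\sim$ (this is what makes it an ``independence relation'' in the sense those lemmas require). This is easy---if $P\to M_3$ is a regular monomorphism and $M_3\to M_3'$ is in $\ck_{reg}$, the composite $P\to M_3'$ is a regular monomorphism; conversely a left factor of a regular monomorphism is a regular monomorphism by the (epi, regular mono) factorization---but it should be said before citing the lemmas.

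For (4) your route diverges from the paper's. The paper does not attempt to build the presenting colimit by hand; instead it observes that $\ck^\boxdot$ is the category of models of a limit sketch in $\ck^\square$ (the outer rectangle is a pullback, the inner quadrangle is a pushout, and the three relevant arrows are regular monomorphisms, each of which is an equalizer-type condition), and then invokes the general machinery of \cite[2.60]{adamek-rosicky} to conclude that $\ck^\boxdot$ is accessible and accessibly embedded in $\ck^\square$. This sidesteps precisely the ``regularization'' step you correctly flag as delicate: simultaneously forcing the two span legs, the pushout, and the comparison map to be regular monomorphisms while keeping all four corners $\lambda$-presentable is awkward to carry out by direct factorization, and your alternative via closure under $\lambda$-pure subobjects and \cite[2.34]{adamek-rosicky} would yield accessibility but at an uncontrolled index, not $\lambda$. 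The sketch argument buys you accessibility in one line; your approach is more elementary in spirit but leaves the hardest step as a promissory note. (The paper's own proof is also terse on the passage from accessibility to $\lambda$-accessibility with the stated description of $\lambda$-presentables; both treatments implicitly rely on combining accessibility with the closure under $\lambda$-directed colimits established in (3).)
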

\begin{proof} \
  \begin{enumerate}
  \item Straightforward.
  \item The properties of $\nf^\ast$ are proven as in the proof of Theorem \ref{indep}, except for transitivity. To prove transitivity, consider:
  
$$  
  \xymatrix@=3pc{
    M_1 \ar[r]^{} & M_3 \ar[r]^{}  & M_5 \\
    M_0 \ar [u]^{} \ar [r]_{} & M_2 \ar[u]_{} \ar[r]_{} & M_4 \ar[u]_{}
  }
  $$
where both squares are effective pullbacks. We have to show that the outer rectangle is an effective pullback.
Thus we have to show that the induced morphism $p:P\to M_5$ from the pushout
$$
\xymatrix@=3pc{
M_1 \ar[r]^{} & P \\
M_0 \ar [u]^{} \ar [r]_{} &
M_4 \ar[u]_{}
}
$$  
is a regular monomorphism. This pushout is a composition of pushouts
$$  
  \xymatrix@=3pc{
    M_1 \ar[r]^{} & Q \ar[r]^{}  & P \\
    M_0 \ar [u]^{} \ar [r]_{} & M_2 \ar[u]_{} \ar[r]_{} & M_4 \ar[u]_{}
  }
  $$
Consider the pushout
 $$
\xymatrix@=3pc{
M_3 \ar[r]^{} & P' \\
Q \ar [u]^{q} \ar [r]_{} &
P \ar[u]_{\bar{q}}
}
$$  
where $q:Q\to M_3$ is the induced morphism. Since the left pullback above is effective, $q$ is a regular monomorphism and thus
$\bar{q}$ is a regular monomorphism. Since
$$
\xymatrix@=3pc{
M_3 \ar[r]^{} & P' \\
M_2 \ar [u]^{} \ar [r]_{} &
M_4 \ar[u]_{}
}
$$ 
is a pushout and the right pullback above is effective, the induced morphism $p':P'\to M_5$ is a regular monomorphism. Thus
$p=p'\bar{q}$ is a regular monomorphism. Conversely, if the outer rectangle and right square in the proof above are effective pullbacks then the left square is an effective pullback. This follows directly from monotonicity. In fact, the composition of $q$ with $M_3\to M_5$ is a regular monomorphism as the composition of $Q\to P$ with $p$. Thus $q$ is a regular monomorphism (see \cite[2.1.4]{freyd-kelly}).
\item Let $D:\cd\to \cl$ be a $\lambda$-directed diagram where $Dd$ is $f_d:M_d\to N_d$. Let $f:M\to N$
be its colimit in $\ck^2$. For each $d\in\cd$, the square
$$
\xymatrix@=3pc{
M \ar[r]^{f} & N \\
M_d \ar [u]^{} \ar [r]_{f_d} &
N_d \ar[u]_{}
}
$$
is a pullback because it is a $\lambda$-directed colimit of pullbacks
$$
\xymatrix@=3pc{
M_d' \ar[r]^{f_{d'}} & N_d' \\
M_d \ar [u]^{} \ar [r]_{f_d} &
N_d \ar[u]_{}
}
$$
and pullbacks commute with $\lambda$-directed colimits in $\ck$ (see \cite{adamek-rosicky} 1.59). Analogously, the pushout
$$
\xymatrix@=3pc{
M \ar[r]^{g} & P \\
M_d \ar [u]^{} \ar [r]_{f_d} &
N_d \ar[u]_{}
}
$$
is a $\lambda$-directed colimit of pushouts
$$
\xymatrix@=3pc{
M_d' \ar[r]^{g_{d'}} & P_{d'} \\
M_d \ar [u]^{} \ar [r]_{f_d} &
N_d \ar[u]_{}
}
$$
Thus the induced morphism $p:P\to N$ is a $\lambda$-directed colimit of induced morphisms $p_{d'}:P_{d'}\to N_{d'}$. Hence $p$ is a regular monomorphism.
\item The category $\ck^\boxdot$ is accessible and accessibly embedded into the category $\ck^\square$ because it is given by the following sketch (see \cite{adamek-rosicky} 2.60)  
$$
\xymatrix{
A\ar [rr]^{} \ar [dd]_{} && B \ar [dl]_{}
\ar[dd]^{u}\\
& {P}\ar [dr]^{p} &\\
C \ar [rr]_{v}  \ar [ur]^{}&& D
}
$$
where the outer rectangle is a pullback, the inner quadrangle is a pushout and $u,v,p$ are regular monomorphisms. There is a regular cardinal $\mu\triangleright\lambda$ such that $\ck^\boxdot$ is $\mu$-accessible and the inclusion $G:\ck^\boxdot\to\ck^\square$ preserves $\mu$-directed colimits and $\mu$-presentable objects.
\end{enumerate}
\end{proof}

We return to the relation $\nf^{\ast}$ in Example~\ref{nfastwitness}, as it provides a useful example of the \emph{witness property} (Definition~\ref{wp-def}).

\begin{example}\label{locally-finite-example}
  Let $\kappa \ge 2$ be a (possibly finite) cardinal. We call a graph \emph{$\kappa$-local} if all its vertices have degree strictly less than $\kappa$. Let $\ck$ be the category of $\kappa$-local graphs (seen as a full subcategory of the category $\Gra$ of graphs), and let $\theta := \kappa^+ + \aleph_0$. Then $\ck_{reg}$ is closed in $\Gra$ under connected limits, pushouts and $\theta$-directed colimits. Thus $\ck_{reg}$ is a locally $\theta$-multipresentable category. Moreover if $\kappa < \aleph_0$, then $\ck$ is coregular and locally finitely presentable. On the other hand, like $\Gra$, $\ck$ does \emph{not} have effective unions. Nevertheless, we show that it has a stable independence relation. Let $\nf^\ast$ be the independence relation on $\ck_{reg}$ given by Definition \ref{nf1-def} (in this case, this will be as given by Example \ref{indep-examples}(\ref{graph-example}): two graphs are independent over the base if there are no cross edges except over the base). By the proof of Theorem \ref{nf1-thm} (or by checking directly), $\nf^\ast$ satisfies all the properties in the definition of a stable independence notion, except perhaps accessibility. However in this case $(\ck_{reg})_{\NF}$ \emph{is} accessible. Regular monomorphisms in $\ck$ are induced by the relation of being a full subgraph, so let $G \subseteq H$ be $\kappa$-local graphs with $G$ a full subgraph of $H$. Given a set $A \subseteq U H$, there are full subgraphs $G_0 \subseteq H_0$ such that $H_0$ contains $A$, $G_0$ contains $A \cap G$, $|U H_0| \le |A| + \theta$, and the square  
$$  
      \xymatrix@=3pc{
        G \ar@{}\ar[r]{} & H \\
        G_0 \ar [u]^{} \ar [r]_{} &
        H_0 \ar[u]_{}
      }
      $$
      
is an effective pullback (i.e.\ any cross edge between $G$ and $H_0$ inside $H$ is inside $G_0$). It suffices to add to $A \cap G$ all vertices of $G$ connected with a vertex of $A$ by an edge. Thus $\nf^\ast$ is a stable independence relation. 

We consider the canonicity of this relation in Remark~\ref{canon-locally-finite}.
\end{example}

\section{From accessible category to $\mu$-AEC}\label{categ-mu-aec}

In Lemma \ref{lp}, we examined the relationship between a certain accessible category $\ck$ and the category $\ck_{reg}$ obtained by restricting to the regular monomorphisms.  In this section we prove several more results along these lines, both in connection with $\ck_{reg}$ and with $\ck_{mono}$, where the morphisms are taken to be all monomorphisms in the original category $\ck$. The latter is particularly useful in light of Fact \ref{mu-aec-acc}: accessible categories with all morphisms monomorphisms are $\mu$-AECs, and vice versa.  This provides the essential Rosetta stone that allows us to translate the uniqueness and canonicity results derived for $\mu$-AECs below back to the abstract categorical framework. We deduce in particular (using the corresponding result of the third author for universal classes \cite{ap-universal-apal, categ-universal-2-selecta}) that the eventual categoricity conjecture holds for all locally $\aleph_0$-multipresentable categories (Corollary \ref{evcatlocnullpres}). 

\begin{notation} For a category $\ck$, we denote by $\ck_{mono}$ the full subcategory of $\ck$ whose morphisms are precisely the monomorphisms of $\ck$.\end{notation}

 
\begin{lem}\label{mono}
Let $\ck$ be a $\lambda$-accessible category. Then $\ck_{mono}$ is accessible and has $\lambda$-directed colimits.
\end{lem}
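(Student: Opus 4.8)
The plan is to prove the two assertions separately: first that $\ck_{mono}$ has $\lambda$-directed colimits, computed exactly as in $\ck$, and then that $\ck_{mono}$ is $\mu$-accessible for a suitably large regular cardinal $\mu \triangleright \lambda$. The guiding principle throughout is that a morphism of $\ck_{mono}$ is nothing but a monomorphism of $\ck$, so everything should reduce to the accessibility of $\ck$ together with the good behaviour of monomorphisms under $\lambda$-filtered colimits.

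For the first part, I would start by recording that $\lambda$-directed colimits of monomorphisms in $\ck$ have monomorphic colimit cocones. To see that a colimit injection $c_i : D_i \to C$ is monic, it suffices to test against $\lambda$-presentable objects $X$ (any pair of distinct maps into $D_i$ equalized by $c_i$ can be precomposed with a colimit map from a $\lambda$-presentable object on which they still differ). For $\lambda$-presentable $X$, the functor $\ck(X,-)$ preserves the colimit, so $c_i f = c_i g$ forces $D_{ij} f = D_{ij} g$ at some later stage $j$; since the transition map $D_{ij}$ is a monomorphism, $f = g$. The same reduction shows that the comparison map out of the $\ck$-colimit induced by any cocone of monomorphisms is itself monic, so the $\ck$-colimit is also a colimit in $\ck_{mono}$. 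In the same breath I would verify, using the elementary fact that $gf$ monic implies $f$ monic, that every $\lambda$-presentable object of $\ck$ remains $\lambda$-presentable in $\ck_{mono}$: a monomorphism out of a $\lambda$-presentable object into a $\lambda$-directed colimit factors through a stage in $\ck$, and the factorization is automatically monic; essential uniqueness transfers in the same way.

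The genuine difficulty is accessibility, and here the main obstacle is that a general accessible category has no (epi, mono) factorization system, so one cannot simply write an object $C$ as a $\mu$-directed colimit of small objects and then factor the colimit maps to produce small subobjects. To exhibit $C$ as a $\mu$-directed colimit of monomorphisms from small objects, I would instead run a L\"owenheim-Skolem-type closure argument. Fix a presentation $C = \colim_{i \in I} A_i$ as a $\lambda$-directed colimit of $\lambda$-presentable objects with maps $a_i : A_i \to C$, and for $J \subseteq I$ write $C_J := \colim_{i \in J} A_i$ with induced map $c_J : C_J \to C$. Analyzing when $c_J$ is monic (again by testing against $\lambda$-presentable objects) shows it suffices that $J$ be $\lambda$-directed and closed under a set of ``witnesses'': whenever two maps from a $\lambda$-presentable object into some $A_j$ with $j \in J$ are equalized somewhere in $I$, a witness to this coincidence must already lie in $J$. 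Choosing $\mu \triangleright \lambda$ regular and larger than the number of $\lambda$-presentable objects and than all hom-sets between them, a routine iteration closes any subset of size $<\mu$ into such a $\lambda$-directed, witness-closed $J$ of size $<\mu$, and these $J$ form a $\mu$-directed family covering $I$. The resulting objects $C_J$ are $\mu$-presentable in $\ck$ (a $\lambda$-directed colimit of fewer than $\mu$ many $\lambda$-presentable objects, by \cite[1.16]{adamek-rosicky}), hence $\mu$-presentable in $\ck_{mono}$ by the argument above; the transition and colimit maps are monomorphisms, and $C = \colim_J C_J$ in $\ck_{mono}$. As the $C_J$ range over a set, this establishes that $\ck_{mono}$ is $\mu$-accessible, while $\mu$-directed colimits are in particular $\lambda$-directed colimits, so the two conclusions fit together.
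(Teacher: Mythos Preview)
Your proof is correct and takes a genuinely different route from the paper's. The paper argues the first part by embedding $\ck$ fully into the presheaf category $\Set^{\ca^{\op}}$ via the canonical functor $E$ of \cite[1.25]{adamek-rosicky}, and then quoting \cite[1.60]{adamek-rosicky} for the stability of monomorphisms under $\lambda$-directed colimits in presheaves; you instead prove this stability directly in $\ck$ by testing against $\lambda$-presentable objects. For accessibility, the paper simply invokes \cite[2.34]{adamek-rosicky}: there is a regular $\mu\triangleright\lambda$ such that every object is a $\mu$-directed colimit of $\mu$-presentable $\lambda$-pure subobjects, and $\lambda$-pure morphisms are monic. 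Your L\"owenheim--Skolem closure argument is effectively an elementary, self-contained reproof of the relevant content of \cite[2.33, 2.34]{adamek-rosicky}, bypassing the machinery of pure morphisms entirely. What the paper's approach buys is brevity and a slightly stronger intermediate statement (the subobjects are $\lambda$-pure, not merely monic); what your approach buys is transparency and independence from the cited black boxes, and indeed your size estimate for $\mu$ matches the one the paper records separately in Remark~\ref{estimate}.
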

\begin{proof}
Let $\ca$ be a (representative) full subcategory of $\ck$ consisting of $\lambda$-presentable objects and
$$
E:\ck\to \Set^{\ca^{\op}}
$$
be the canonical functor (see \cite[1.25]{adamek-rosicky}). Then $E$ is a full embedding preserving $\lambda$-directed colimits (see \cite[1.26]{adamek-rosicky}). Clearly, $E$ preserves monomorphisms too. Thus monomorphisms in $\ck$ are stable under $\lambda$-directed colimits
because this is true in $\Set^{\ca^{\op}}$ (see \cite[1.60]{adamek-rosicky}). That is, given $\lambda$-directed diagrams $D,D':\cd\to\ck$
and a natural monotransformation $\delta:D\to D'$, then $\colim\delta:\colim D\to\colim D'$ is a monomorphism. Consequently,
$\ck_{mono}$ is closed under $\lambda$-directed colimits in $\ck$. That is, given a $\lambda$-directed colimit of monomorphisms
in $\ck$, (i) the colimit cocone consists of monomorphisms, and (ii) for every cocone of monomorphisms the factoring morphism 
is a monomorphism.

Following \cite[2.34]{adamek-rosicky}, there is a regular cardinal $\mu\triangleright\lambda$ such that each object of $\ck$ is a $\mu$-directed
colimit of $\mu$-presentable $\lambda$-pure subobjects of $\ck$. Thus $\ck_{mono}$ is $\mu$-accessible.
\end{proof}

\begin{rem}\label{estimate}
Let $\mu$ be a regular cardinal such that $\lambda\trianglelefteq\mu$ and $|\ca|<\mu$ where $|\ca|$ denotes the cardinality of the set
of morphisms of $\ca$. Following \cite[2.33, 2.34]{adamek-rosicky}, $\ck_{mono}$ is $(\mu^{<\mu})^+$-accessible.
\end{rem}

When the starting class is locally multipresentable, the index of accessibility is preserved:

\begin{lem}\label{mono1}
Let $\ck$ be a locally $\lambda$-multipresentable category. Then $\ck_{mono}$ is locally $\lambda$-multipresentable.
\end{lem}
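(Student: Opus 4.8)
The plan is to verify the two defining features of local $\lambda$-multipresentability for $\ck_{mono}$ separately: closure under connected limits, and $\lambda$-accessibility with the \emph{same} index $\lambda$. I will use throughout the characterization already invoked in the proof of Lemma \ref{lp}, namely that a category is locally $\lambda$-multipresentable precisely when it is $\lambda$-accessible and has connected limits; since $\ck$ is locally $\lambda$-multipresentable, it has in particular all connected limits.

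First I would show that $\ck_{mono}$ is closed under connected limits in $\ck$. Given a connected diagram $D:\cj\to\ck_{mono}$, form its limit $(L,(\pi_j)_{j\in\cj})$ in $\ck$. The key point is that every projection $\pi_j$ is a monomorphism. Fix $j_0$ and a parallel pair $a,b$ with $\pi_{j_0}a=\pi_{j_0}b$; one propagates this equality along any edge $d$ of $\cj$ --- forward using $\pi_j=D(d)\pi_i$, and backward using that $D(d)$ is a monomorphism --- so connectedness forces $\pi_j a=\pi_j b$ for all $j$, whence $a=b$ by the universal property of $L$. The same cancellation shows that any cone on $D$ with monomorphic legs factors through $L$ by a (necessarily monic) morphism, so $L$ together with the $\pi_j$ is the limit of $D$ in $\ck_{mono}$. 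Thus $\ck_{mono}$ has all connected limits, computed as in $\ck$.

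For $\lambda$-accessibility, recall from Lemma \ref{mono} and its proof that $\ck_{mono}$ is closed under $\lambda$-directed colimits in $\ck$, with monomorphic cocones. Moreover every $\lambda$-presentable object $A$ of $\ck$ remains $\lambda$-presentable in $\ck_{mono}$: a monomorphism $A\to\colim N_i$ into a $\lambda$-directed colimit factors through some $N_i$ (as $A$ is $\lambda$-presentable in $\ck$), and the factorization $A\to N_i$ is itself monic since $N_i\to\colim N_i$ is. It remains to see that these objects generate $\ck_{mono}$ under $\lambda$-directed colimits, i.e.\ that every $K$ is a $\lambda$-directed colimit of $\lambda$-presentable objects \emph{along monomorphisms}. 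Writing $K=\colim_d A_d$ as a $\lambda$-directed colimit of $\lambda$-presentables in $\ck$, I would factor each structure map $\alpha_d:A_d\to K$ through its image $I_d\hookrightarrow K$, defined as the intersection of all subobjects of $K$ through which $\alpha_d$ factors; this exists as a connected limit (a wide pullback) because $\ck$ is well-powered and has connected limits, and $\alpha_d$ indeed factors through it by the universal property of that wide pullback. The resulting $I_d$ form a $\lambda$-directed diagram of subobjects of $K$ with colimit $K$, so once each $I_d$ is $\lambda$-presentable this exhibits $K$ as a $\lambda$-directed colimit in $\ck_{mono}$ of $\lambda$-presentable objects.

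The main obstacle is exactly the $\lambda$-presentability of the images $I_d$: each is a strong quotient of the $\lambda$-presentable object $A_d$, so the argument reduces to the fact that $\lambda$-presentable objects are closed under strong quotients in a locally $\lambda$-multipresentable category --- the analogue of \cite[1.69]{adamek-rosicky} for the multipresentable setting. This is precisely where local multipresentability is used essentially, and where the index can be kept at $\lambda$: by contrast, in Lemma \ref{mono} one has no such supply of $\lambda$-presentable subobjects for a bare $\lambda$-accessible $\ck$, and must instead pass to $\mu$-presentable $\lambda$-pure subobjects, forcing the larger index $\mu\triangleright\lambda$ of Remark \ref{estimate}. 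Once this closure under strong quotients is established, $\ck_{mono}$ is $\lambda$-accessible, and combined with the closure under connected limits from the first step it is locally $\lambda$-multipresentable, as desired.
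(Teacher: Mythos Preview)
Your proposal is correct and follows essentially the same route as the paper: factor the structure maps of a $\lambda$-directed presentation of $K$ through their images to obtain a $\lambda$-directed system of monomorphisms into $K$, invoke the analogue of \cite[1.69]{adamek-rosicky} to see that these images are $\lambda$-presentable in $\ck_{mono}$, and check connected limits separately. The only cosmetic difference is that the paper obtains the image via the (strong epimorphism, monomorphism) factorization system (citing Kelly for its existence in a locally multipresentable category and Johnstone for well-poweredness), whereas you construct it directly as a wide pullback of subobjects; these yield the same subobject, and the paper's treatment of connected limits is correspondingly terser (reducing to equalizers and wide pullbacks of monos) than your direct cone argument.
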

\begin{proof}
Following \ref{mono}, we have to prove that each object $K$ of $\ck$ is a $\lambda$-directed colimit of $\lambda$-presentable objects 
in $\ck_{mono}$. Let $a_i:A_i\to K$ be a $\lambda$-directed colimit of $\lambda$-presentable objects $A_i$ in $\ck$. Following \cite[4.5]{kelly}, $\ck$ has (strong epimorphism, monomorphism)-factorizations; $\ck$ is well-powered by
\cite[A1.4.17]{johnstone}. Let $a_i=a_i''a_i'$ be a (strong epimorphism, monomorphism) factorization of $a_i$. Then $a_i'':B_i\to K$ is 
a $\lambda$-directed colimit of monomorphisms. Following \cite[1.69]{adamek-rosicky}, objects $B_i$ are $\lambda$-presentable in $\ck_{mono}$.
Thus $\ck_{mono}$ is $\lambda$-accessible.

It remains to show that $\ck_{mono}$ has connected limits. Since equalizers are monomorphisms, we only need that wide pullbacks 
of monomorphisms are monomorphisms. But this is evident.
\end{proof}

\begin{remark}\label{compare}
If $K$ is $\lambda$-presentable in $\ck$ then it is $\lambda$-presentable in $\ck_{mono}$. One cannot expect the converse: in a general locally $\lambda$-presentable category $\ck$, for example, the objects of $\ck$ that are $\lambda$-presentable in $\ck_{mono}$ (the {\it $\lambda$-generated objects} of $\ck$) are guaranteed only to be strong quotients of $\lambda$-presentables (see 
\cite[1.67, 1.68]{adamek-rosicky}).\end{remark}
 
\begin{cor}\label{evcatlocnullpres} The eventual categoricity conjecture in the sense of internal sizes (see \cite{multipres-pams}) holds for locally $\aleph_0$-multipresentable categories.\end{cor}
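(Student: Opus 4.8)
The plan is to reduce to the eventual categoricity theorem for universal classes via the $\ck_{mono}$ construction. First I would observe that, by Lemma \ref{mono1}, $\ck_{mono}$ is again locally $\aleph_0$-multipresentable; in particular it is a finitely accessible category all of whose morphisms are monomorphisms. By Fact \ref{mu-aec-acc} (with $\mu = \aleph_0$) it is therefore equivalent to an AEC.

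Next I would invoke the correspondence of \cite{multipres-pams} identifying locally $\aleph_0$-multipresentable categories with all morphisms monomorphisms, up to equivalence of categories, with universal classes; this presents $\ck_{mono}$ as (the category of models of) a universal class $\K$ and, crucially, matches the internal sizes on the two sides, which above $\aleph_0$ agree with cardinality in $\K$. Having landed inside a universal class, I would apply the eventual categoricity theorem of the third author \cite{ap-universal-apal, categ-universal-2-selecta}: a universal class categorical in some sufficiently large cardinal is categorical in all sufficiently large cardinals. This is a theorem of ZFC, so no large cardinal hypothesis enters here. Since both categoricity and the spectrum of internal sizes are invariant under equivalence of categories, eventual categoricity in the sense of internal sizes passes from $\K$ to $\ck_{mono}$.

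It then remains to transfer the conclusion from $\ck_{mono}$ back to $\ck$. These two categories have the same objects, and the isomorphisms of $\ck$ are exactly the isomorphisms of $\ck_{mono}$ (an isomorphism is in particular a monomorphism, and so is its inverse), so two objects are isomorphic in $\ck$ if and only if they are isomorphic in $\ck_{mono}$. Hence the two categoricity spectra coincide as soon as the two internal-size scales are aligned.

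The alignment of internal sizes is the step I expect to be the main obstacle. By Remark \ref{compare}, an object that is $\lambda$-presentable in $\ck$ is $\lambda$-presentable in $\ck_{mono}$, but the converse fails in general, so a priori an object may have strictly smaller internal size in $\ck_{mono}$ than in $\ck$. The work is to show that for a locally $\aleph_0$-multipresentable $\ck$ the two notions of internal size nonetheless agree on a tail of cardinals --- here one leans on the preservation of the index $\aleph_0$ in Lemma \ref{mono1} together with the internal-size analysis of \cite{multipres-pams} --- so that ``categorical in internal size $\lambda$'' means the same thing in $\ck$ and in $\ck_{mono}$ for all large $\lambda$. Granting this, the reduction above delivers eventual categoricity for $\ck$ itself.
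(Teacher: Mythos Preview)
Your proposal is essentially the same approach as the paper's: pass to $\ck_{mono}$ via Lemma \ref{mono1}, identify it with a universal class through \cite{multipres-pams}, apply the eventual categoricity theorem for universal classes, and transfer back after aligning internal sizes. You have correctly isolated the one nontrivial step --- the eventual agreement of presentability ranks in $\ck$ and $\ck_{mono}$ --- but you leave it as ``work to be done,'' gesturing at Lemma \ref{mono1} and \cite{multipres-pams}. The paper dispatches this step by a direct citation: by \cite[4.3]{beke-rosicky}, when $\ck$ is locally $\aleph_0$-multipresentable the embedding $\ck_{mono}\to\ck$ preserves $\mu$-presentable objects for all sufficiently large $\mu$, which is exactly the tail alignment you need. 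With that reference in hand your argument is complete and matches the paper's.
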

\begin{proof} By contrast with the remark above, whenever $\ck$ is locally $\aleph_0$-multipresentable, the embedding $\ck_{mono}\to\ck$ preserves $\mu$-presentable objects 
for all sufficiently large $\mu$ (see \cite[4.3]{beke-rosicky}). Recalling that any locally $\aleph_0$-multipresentable category is a universal AEC (\cite[5.9]{multipres-pams}), the eventual categoricity conjecture for locally $\aleph_0$-multipresentable categories follows from the corresponding result for universal AECs in \cite{ap-universal-apal, categ-universal-2-selecta}. 
\end{proof}

In order to be able to restrict to \emph{regular} monomorphisms, we will assume existence of pushouts:

\begin{lem}\label{regular}
Let $\ck$ be a $\lambda$-accessible category with pushouts. Then $\ck_{reg}$ is accessible and has $\lambda$-directed colimits.
\end{lem}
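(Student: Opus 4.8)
The plan is to follow the template of Lemmas \ref{lp} and \ref{mono}, establishing first that $\ck_{reg}$ is closed under $\lambda$-directed colimits in $\ck$ (so that it has $\lambda$-directed colimits, computed as in $\ck$), and then that it is accessible. The essential new ingredient, compared with $\ck_{mono}$, is that regular monomorphisms---unlike plain monomorphisms---are not automatically stable under directed colimits; the hypothesis that $\ck$ has pushouts is precisely what lets us control them. The key observation is that, since $\ck$ has pushouts, every regular monomorphism $m$ is the equalizer of its own cokernel pair (the pushout of $m$ along itself). I will exploit this together with the canonical full embedding $E\colon \ck\to\Set^{\ca^{\op}}$ of \cite[1.25, 1.26]{adamek-rosicky} (for $\ca$ a representative set of $\lambda$-presentables), which preserves $\lambda$-directed colimits and all limits existing in $\ck$---in particular equalizers---and which, being fully faithful, reflects isomorphisms.

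First I would prove that the $\lambda$-directed colimit of a natural transformation $\delta\colon D\to D'$ of $\lambda$-directed diagrams $\cd\to\ck$ with all components $\delta_d$ regular monomorphisms is again a regular monomorphism. For each $d$ form the cokernel pair $(p_d,q_d)$ of $\delta_d$ (a pushout), so that $\delta_d=\mathrm{eq}(p_d,q_d)$; these assemble into a $\lambda$-directed diagram, and I set $p,q\colon B\to P$ to be the colimits of the $p_d,q_d$, where $B=\colim D'd$ and $P=\colim P_d$ exist because $\ck$ has $\lambda$-directed colimits. Applying $E$ (which preserves both the colimits and each equalizer $\delta_d=\mathrm{eq}(p_d,q_d)$) and using that in the presheaf category $\Set^{\ca^{\op}}$ $\lambda$-directed colimits commute with equalizers, one obtains $E(\colim\delta)=\colim_d\mathrm{eq}(Ep_d,Eq_d)=\mathrm{eq}(Ep,Eq)$. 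Since $E$ is fully faithful, this identification transports the universal property back to $\ck$, yielding $\colim\delta=\mathrm{eq}(p,q)$, a regular monomorphism. Taking $D$ (resp.\ $D'$) to be a constant diagram and comparing it with the restriction of a diagram to an under-category $d_0/\cd$ then gives, in the usual way, that the colimit cocone of a $\lambda$-directed diagram in $\ck_{reg}$ consists of regular monomorphisms and that the factorization of any cocone of regular monomorphisms is a regular monomorphism. Hence $\ck_{reg}$ is closed under $\lambda$-directed colimits in $\ck$, and in particular has them.

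For accessibility I would argue exactly as in Lemma \ref{lp}: by \cite[2.34]{adamek-rosicky} there is $\mu\triangleright\lambda$ such that every object of $\ck$ is a $\mu$-directed colimit of $\mu$-presentable $\lambda$-pure subobjects, and by \cite[2.31]{adamek-rosicky} $\lambda$-pure monomorphisms are regular, so this is a $\mu$-directed diagram in $\ck_{reg}$; by the closure just established it is also a colimit there. It remains to see that each such object $B$, being $\mu$-presentable in $\ck$, is $\mu$-presentable in $\ck_{reg}$, which follows as in \cite[1.69]{adamek-rosicky} once we know that a factorization $f=x\circ f'$ with $f,x$ regular monomorphisms forces $f'$ to be a regular monomorphism. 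This cancellation is where I again use pushouts: $f'$ is monic since $f$ is; forming the cokernel pairs of $f'$ and of $f$, noting that $x$ induces a comparison between them, and using that $f$ is the equalizer of its cokernel pair, one checks directly that $f'$ is the equalizer of its own cokernel pair, hence regular. With a set of such $\mu$-presentable generators in hand, $\ck_{reg}$ is $\mu$-accessible, hence accessible.

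The main obstacle is the first step---showing regular monomorphisms are closed under $\lambda$-directed colimits---since, unlike the case of $\ck_{mono}$, the embedding $E$ alone does not suffice: in $\Set^{\ca^{\op}}$ every monomorphism is regular, so the regularity information is lost there and must be recovered through the cokernel-pair/equalizer presentation, which is exactly what the pushout hypothesis supplies. The cancellation property for regular monomorphisms in the presentability step is a secondary point of care, again resolved by the pushout structure rather than by a factorization system (which is not available in this generality).
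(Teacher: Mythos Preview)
Your proof is correct and follows essentially the same route as the paper: both argue by the template of Lemma~\ref{mono}, first showing that $\ck_{reg}$ is closed under $\lambda$-directed colimits in $\ck$, then using the $\lambda$-pure subobject resolution from \cite[2.34]{adamek-rosicky} together with the fact that $\lambda$-pure morphisms are regular monomorphisms. The paper simply cites \cite{on-pure-morphisms} (Proposition~2 for closure under directed colimits, Corollary~1 for $\lambda$-pure $\Rightarrow$ regular), whereas you reprove the first of these from scratch via cokernel pairs and the presheaf embedding, and you spell out the cancellation argument needed to transfer $\mu$-presentability to $\ck_{reg}$, a detail the paper leaves implicit in ``follow the proof of \ref{mono}.''

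One small point: your appeal to \cite[2.31]{adamek-rosicky} for ``$\lambda$-pure monomorphisms are regular'' is not quite right in this generality, since that result is stated for locally $\lambda$-presentable categories, and here $\ck$ is only $\lambda$-accessible with pushouts. The statement is nonetheless true in your setting, and the paper supplies the correct reference, namely \cite[Corollary~1]{on-pure-morphisms}; so this is a citation issue rather than a mathematical gap.
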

\begin{proof}
We follow the proof of \ref{mono}. We note, first, that regular monomorphisms are stable under $\lambda$-directed colimits 
in $\ck$ (see \cite{on-pure-morphisms} Proposition 2).

For the rest, it suffices to note that $\lambda$-pure morphisms are regular monomorphisms (see \cite[Corollary 1]{on-pure-morphisms}).
\end{proof}

\section{Some model theory of $\mu$-AECs}\label{mt-mu-aec}

We now shift more firmly to the model-theoretic context, establishing certain technical results on $\mu$-AECs that we will require for the proofs related to canonicity of stable independence relations, and their relationship to a suitable formulation of the order property.  As mentioned already, Lemma~\ref{mono} and Fact~\ref{mu-aec-acc} will allow us to translate such results back to the broader category-theoretic framework that we have considered up to this point. 

Roughly speaking, the following lemma says that under reasonable assumptions, presentability and cardinality very often coincide quite often in reasonable classes of structures. Parts of the proof are similar to (for example) \cite[2.3.10]{makkai-pare}. The lemma will be used in the proof of Theorem \ref{mu-aec-criteria} and the proof of Theorem \ref{accessible-charact}. Note that we do not assume that the class $\K$ is coherent, nor that $M$ is a $\tau (\K)$-substructure of $N$ when $M \lea N$.

\begin{lem}\label{mu-aec-technical-2}
    Let $\K = (K, \lea)$ be such that:
    \begin{enumerate}
    \item $K$ is a class of structures in a fixed $\mu$-ary vocabulary $\tau (\K)$.
    \item $\lea$ is a partial order on $K$ such that $M \lea N$ implies $U M \subseteq U N$.
    \item $\K$ is closed under isomorphisms.
    \end{enumerate}

    Let $\mu \le \theta$ be regular cardinals such that $\K$ is $\theta$-accessible and has concrete $\mu$-directed colimits. Let $C$ be the class of cardinals $\lambda$ such that for any $M \in \K$, $|U M| < \lambda$ if and only if $M$ is $\lambda_0$-presentable for some $\lambda_0 < \lambda$. Then:

  \begin{enumerate}
  \item $C$ is closed unbounded.
  \item\label{mu-aec-technical-2-2} If $\lambda$ is such that $\lambda = \lambda^{<\mu}$ and $\theta \le \lambda^+$, then for any $M \in \K$ there exists $\seq{M_i : i \in \mathcal{I}}$ increasing and $\lambda^+$-directed such that:
    \begin{enumerate}
    \item $M = \bigcup_{i \in \mathcal{I}} M_i$.
    \item For all $i \in \mathcal{I}$, $M_i$ is a $\mu$-directed union of at most $\lambda$-many $\theta$-presentable objects. In particular, $M_i$ is $\lambda^+$-presentable and if in addition $(\theta, \lambda^+] \cap C \neq \emptyset$, then also $|U M_i| \le \lambda$.
    \item $M_i = \bigcup_{j < i} M_j$ for all $i \in \mathcal{I}$ such that $\{j \in \mathcal{I} \mid j < i\}$ is $\mu$-directed.
    \end{enumerate}
  \item If $\lambda > \theta$ is such that $\lambda = \lambda^{<\mu}$, and $C \cap (\theta, \lambda] \neq \emptyset$, then $\lambda^+ \in C$.
  \end{enumerate}
\end{lem}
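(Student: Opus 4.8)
The plan is to reduce everything to the interplay between the external cardinality $|UM|$ and the intrinsic presentability rank of $M$, and to control both through the canonical full embedding $E \colon \K \to \Set^{\ca^{\op}}$ attached to the $\theta$-accessible structure (here $\ca$ is a representative set of $\theta$-presentable objects, and $E$ preserves $\theta$-directed colimits and monomorphisms, as in \cite[1.25, 1.26]{adamek-rosicky}). Two facts I would lean on throughout are: (i) for each regular $\lambda_0 \ge \theta$ there is, up to isomorphism, only a set of $\lambda_0$-presentable objects, so $\beta(\lambda_0) := \sup\{|UN| : N \text{ is } \lambda_0\text{-presentable}\}$ is a genuine cardinal; and (ii) since $\tau(\K)$ is $\mu$-ary, for each cardinal $\kappa$ there is only a set of structures in $K$ of cardinality $\le \kappa$ up to isomorphism, so (every object of an accessible category being presentable) there is a least regular $\gamma(\kappa)$ for which every such structure is $\gamma(\kappa)$-presentable.

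For part (1), I would first prove closedness directly: if $\lambda$ is a limit point of $C$, then for any $M$ one inserts an intermediate $\lambda' \in C \cap \lambda$ (above $|UM|$ in one direction, above the witnessing presentability degree in the other) and transfers the defining biconditional across $\lambda'$. Unboundedness I would extract from the two monotone class functions $\beta$ and $\gamma$: any cardinal $\lambda > \theta$ closed under both satisfies the defining biconditional of $C$ — closure under $\gamma$ gives ``$|UM| < \lambda \Rightarrow M$ is $\lambda_0$-presentable for some $\lambda_0 < \lambda$'', and closure under $\beta$ gives the converse — and the simultaneous closure points of finitely many class functions on cardinals form a club. Thus $C$ is closed and contains a club, hence is itself closed unbounded.

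For part (2), the plan is to realize $M$ concretely. Since $\mu \le \theta$ and $\K$ has concrete $\mu$-directed colimits, the expression of $M$ as a $\theta$-directed colimit of $\theta$-presentable objects $\langle A_j \rangle_{j \in J}$ is literally a union, $UM = \bigcup_{j \in J} UA_j$. I would then take $\mathcal{I}$ to be the poset, under inclusion, of $\mu$-directed subsets $S \subseteq J$ with $|S| \le \lambda$, and set $M_S := \bigcup_{j \in S} A_j$, which lies in $\K$ as a $\mu$-directed union. The identity $\lambda = \lambda^{<\mu}$ is exactly what keeps the $\mu$-directed closure of a $\le \lambda$-sized subset of $J$ again of size $\le \lambda$; this yields both that $\mathcal{I}$ is $\lambda^+$-directed and that $M = \bigcup_{S \in \mathcal{I}} M_S$. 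Each $M_S$ is a $\mu$-directed colimit of $\le \lambda$ many $\theta$-presentable objects, hence $\lambda^+$-presentable (as $\theta \le \lambda^+$), and a witness $\lambda' \in (\theta, \lambda^+] \cap C$ forces $|UA_j| < \lambda'$, whence $|UM_S| \le \lambda$. Continuity (c) at $\mu$-directed stages I would obtain from the observation that every singleton $\{j\} \subseteq S$ already lies in $\mathcal{I}$, so the proper predecessors of such an $S$ cover it; the remaining conditions are bookkeeping.

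For part (3), I would bootstrap from the first two parts. Resolving $M$ via part (2) for this $\lambda$ (legitimate since $\lambda = \lambda^{<\mu}$, $\theta < \lambda^+$, and $C \cap (\theta, \lambda] \ne \emptyset$), an object with $|UM| < \lambda^+$ is itself a $\mu$-directed union of $\le \lambda$ many $\theta$-presentables and so is $\lambda^+$-presentable, while conversely a $\lambda_0$-presentable $M$ with $\lambda_0 < \lambda^+$ is bounded in size through the $C$-witness $\lambda' \le \lambda$ together with $\beta$. I expect the genuinely delicate point to be precisely this part (3) and the size estimates feeding it: matching ``cardinality $< \lambda^+$'' with ``presentability degree $< \lambda^+$'' takes place exactly at the successor step, where cardinality and presentability rank are most apt to come apart, and the argument must use the witness in $(\theta, \lambda]$ and the closure $\lambda = \lambda^{<\mu}$ to prevent the $\mu$-directed closures, and hence the universes $|UM_S|$, from spilling past $\lambda$. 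The two-way correspondence between $|UM|$ and the presentability rank of $M$ — read off by counting $\K$-morphisms out of the $\theta$-presentable objects of $\ca$ into $M$ — is therefore the technical heart of the whole lemma.
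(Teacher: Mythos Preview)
Your approach is essentially the paper's: for (1), a club of simultaneous closure points of the two size-versus-presentability functions; for (2), taking $\mathcal{I}$ to be the $\mu$-directed subsets of size $\le \lambda$ of the index set of a $\theta$-directed resolution of $M$; for (3), bootstrapping from (2). The embedding $E$ you invoke is never actually used, and the paper does not mention it either.

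One place where your sketch is looser than it should be is the converse direction of (3). Saying that a $\lambda_0$-presentable $M$ with $\lambda_0 < \lambda^+$ is ``bounded in size through the $C$-witness $\lambda' \le \lambda$ together with $\beta$'' does not work as written: membership of $\lambda'$ in $C$ only controls the cardinality of objects that are $\lambda_1$-presentable for some $\lambda_1 < \lambda'$, and says nothing directly about an object that is merely $\lambda^+$-presentable. The paper's argument (which your opening ``Resolving $M$ via part (2)'' sets up but does not carry through) is to use the $\lambda^+$-directed resolution itself: if $M$ is $\lambda^+$-presentable, the identity $M \to M = \bigcup_{i \in \mathcal{I}} M_i$ factors through some $M_i$, whence $M = M_i$ and $|UM| = |UM_i| \le \lambda$ by part (2)(b). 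That is where the $C$-witness in $(\theta, \lambda]$ is actually consumed, via the ``in particular'' clause of (2)(b), not via $\beta$ directly.
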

\begin{proof} \
  \begin{enumerate}
  \item $C$ is clearly closed. Now given a cardinal $\lambda_0$, build $\seq{\lambda_i : i \le \omega}$ increasing continuous such that for all $i < \omega$, for any $M \in \K$, if $M$ is $\lambda_i$-presentable, then $|U M| < \lambda_{i + 1}$ and if $|U M| \le \lambda_i$, then $M$ is $\lambda_{i + 1}$-presentable. This is possible since there is up to isomorphism only a set of objects of cardinality at most $\lambda_i$ and a set of $\lambda_i$-presentable objects. Now $\lambda_{\omega}$ is in $C$, as desired.
  \item Let $M \in \K$. We know that $M$ can be written as a $\theta$-directed union of $\theta$-presentable objects $M = \bigcup_{i \in I} N_i$. Now by the cardinal arithmetic assumption, any subset of $I$ of cardinality at most $\lambda$ can be completed to a $\mu$-directed subset of $I$ of cardinality at most $\lambda$. Thus letting $\mathcal{I}$ be the set of $\mu$-directed subsets of $I$ of cardinality at most $\lambda$, we have that $M = \bigcup_{J \in \mathcal{I}} N_J$. This is a $\lambda^+$-directed system, thus letting for $i \in \mathcal{I}$ $M_i := N_i$, the $M_i$'s are as desired. As for the ``in particular'' part, it is routine to check that the $M_i$'s are $\lambda^+$-presentable (see \cite[3.5(2)]{internal-sizes-v3}) and if $(\theta, \lambda^+] \cap C \neq \emptyset$ this means that $M_i$ is a union of at most $\lambda$-many objects of cardinality at most $\lambda$, so $|U M_i| \le \lambda$.
  \item Let $M \in \K$. Let $\seq{M_i : i \in \mathcal{I}}$ be as given by the previous part. We show that $|U M| \le \lambda$ if and only if $M$ is $\lambda^+$-presentable. First, if $|U M| \le \lambda$ then as $\mathcal{I}$ is $\lambda^+$-directed $M = M_i$ for some $i \in \mathcal{I}$ and since $M_i$ is $\lambda^+$-presentable we are done. Conversely if $M$ is $\lambda^+$-presentable then again as $\mathcal{I}$ is $\lambda^+$-directed we must have that $M = M_i$ for some $i \in \mathcal{I}$, hence $|U M| = |U M_i| \le \lambda$, as desired.
  \end{enumerate}
\end{proof}

The next result is similar to \cite[4.5]{mu-aec-jpaa}, but we do \emph{not} require anything on how presentability ranks relate to cardinalities. Rather, what we need is derived via Lemma \ref{mu-aec-technical-2}. This also improves \cite[5.5]{beke-rosicky}, which is the case $\mu = \aleph_0$.

\begin{thm}\label{mu-aec-criteria}
  Let $\K$ be a coherent abstract class in a $\mu$-ary vocabulary with concrete $\mu$-directed colimits. Then $\K$ is accessible if and only if $\K$ is a $\mu$-AEC.
\end{thm}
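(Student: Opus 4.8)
The plan is to prove the two implications separately, with essentially all the work concentrated in the forward direction. The backward direction is immediate: if $\K$ is a $\mu$-AEC, then Fact~\ref{mu-aec-acc} tells us that $\K$ is $\LS(\K)^+$-accessible, hence accessible. For the forward direction, assume $\K$ is accessible. Two of the three defining axioms of a $\mu$-AEC are already in hand: coherence is among our hypotheses, and the chain axioms are precisely the assertion that $\K$ has concrete $\mu$-directed colimits. Thus the whole problem reduces to producing a Löwenheim--Skolem--Tarski number, i.e.\ to verifying the LST axiom.

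First I would fix the setup. Since $\K$ is accessible it is $\theta$-accessible for some regular cardinal $\theta$, and by raising the index of accessibility if necessary (cf.\ \cite{adamek-rosicky}) we may assume $\theta \ge \mu$; in particular every $\theta$-directed colimit is $\mu$-directed and hence concrete. I would then invoke Lemma~\ref{mu-aec-technical-2}, whose hypotheses hold for $\K$, and let $C$ be the closed unbounded class of cardinals it produces. The candidate LST cardinal $\lambda$ is chosen in two steps: first pick $\lambda_1 \in C$ with $\lambda_1 > \theta + |\tau(\K)| + \mu$ (possible since $C$ is unbounded), then pick $\lambda \ge \lambda_1$ with $\lambda = \lambda^{<\mu}$ (possible since such cardinals are unbounded). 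Then $\lambda = \lambda^{<\mu} \ge |\tau(\K)| + \mu$, $\lambda > \theta$, and $\lambda_1 \in C \cap (\theta, \lambda]$.

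To verify the LST axiom, take $M \in \K$ and $A \subseteq U M$, and set $\kappa := |A|^{<\mu} + \lambda$; a routine computation using regularity of $\mu$ gives $\kappa = \kappa^{<\mu}$. I would apply Lemma~\ref{mu-aec-technical-2}(\ref{mu-aec-technical-2-2}) with $\kappa$ in the role of the lemma's $\lambda$: the hypotheses $\kappa = \kappa^{<\mu}$ and $\theta \le \kappa^+$ hold, and $(\theta, \kappa^+] \cap C \neq \emptyset$ because $\lambda_1$ lies in it. This yields an increasing $\kappa^+$-directed system $\seq{M_i : i \in \mathcal{I}}$ with $M = \bigcup_{i} M_i$ and $|U M_i| \le \kappa$ for all $i$. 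Each $M_i$ arises as a $\mu$-directed union of subobjects that are $\lea M$, so concreteness of $\mu$-directed colimits (the third chain axiom) gives $M_i \lea M$. Finally, since $|A| \le \kappa$ and $\mathcal{I}$ is $\kappa^+$-directed, there is $i^\ast \in \mathcal{I}$ with $A \subseteq U M_{i^\ast}$; then $M_0 := M_{i^\ast}$ satisfies $M_0 \lea M$, $A \subseteq U M_0$, and $|U M_0| \le \kappa = |A|^{<\mu} + \lambda$. Hence $\lambda$ witnesses the LST axiom, and $\K$ is a $\mu$-AEC.

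The crux of the argument is the bridge from categorical presentability to honest cardinality supplied by Lemma~\ref{mu-aec-technical-2}; granting it, what remains is cardinal bookkeeping together with the observation---using that $\theta$-directed colimits are concrete---that the decomposition pieces $M_i$ are genuine $\lea$-submodels rather than merely abstract subobjects. I expect the only points demanding care to be the interaction of the cardinal arithmetic (establishing $\kappa = \kappa^{<\mu}$ and $(\theta, \kappa^+] \cap C \neq \emptyset$) with the precise hypotheses of the lemma, and checking that the output size bound $|U M_i| \le \kappa$ matches the $|A|^{<\mu} + \lambda$ demanded by the LST axiom.
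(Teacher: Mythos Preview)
Your proposal is correct and follows essentially the same approach as the paper's proof: both reduce to the LST axiom, fix $\theta \ge \mu$ with $\K$ $\theta$-accessible, invoke Lemma~\ref{mu-aec-technical-2} to obtain the class $C$ and the $\kappa^+$-directed resolution of $M$ into small pieces, and then pick $i$ so that $A \subseteq U M_i$. Your two-step choice of the LST cardinal (first $\lambda_1 \in C$, then $\lambda \ge \lambda_1$ with $\lambda = \lambda^{<\mu}$) is a cosmetic variant of the paper's single formula $\lambda_0 := (\min(C \cap [\theta^+,\infty)))^{<\mu}$, and you are slightly more explicit than the paper in spelling out why $M_i \lea M$ via the chain axioms.
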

\begin{proof}
  If $\K$ is a $\mu$-AEC, then by Fact \ref{mu-aec-acc}, $\K$ is an accessible category. We show the converse. Assume that $\K$ is accessible and fix $\theta \ge \mu$ such that $\K$ is $\theta$-accessible (such a $\theta$ exists by \cite[2.3.10]{makkai-pare}). Let $C$ be as given by Lemma \ref{mu-aec-technical-2}. We have to show that $\LS (\K)$ exists. Let $\lambda_0 := \left(\min (C \cap [\theta^+, \infty))\right)^{<\mu}$. Note that $\theta < \lambda_0 = \lambda_0^{<\mu}$. We claim that $\LS (\K) \le \lambda_0$. Let $M \in \K$ and let $A \subseteq |M|$. Let $\lambda := |A|^{<\mu} + \lambda_0$. By Lemma \ref{mu-aec-technical-2}(\ref{mu-aec-technical-2-2}), there exists a $\lambda^+$-directed resolution $\seq{M_i : i \in \mathcal{I}}$ of $M$ such that each $M_i$ has cardinality at most $\lambda$. Since the resolution is $\lambda^+$-directed and $|A| \le \lambda$, there exists $i \in \mathcal{I}$ such that $A \subseteq U M_i$. Since $|U M_i| \le \lambda = \lambda_0 + |A|^{<\mu}$, we are done.
\end{proof}

\subsection{Model-homogeneous models}

The following notions are very close to ``$\kappa$-closed'' and ``$\kappa$-saturated'' from \cite{rosicky-sat-jsl}. The only difference is that we use cardinalities instead of presentability ranks. Most of the proofs there carry through with little change, however.

\begin{defin}\label{mh-def}
  Let $\K$ be a $\mu$-AEC. For $\kappa > \LS (\K)$, $M \lea N$, we say that $M$ is \emph{$\kappa$-model-homogeneous in $N$} if whenever $M_0, N_0 \in \K$ are such that:

  \begin{enumerate}
  \item $M_0 \lea M$.
  \item $M_0 \lea N_0 \lea N$.
  \item $N_0 \in \K_{<\kappa}$
  \end{enumerate}

  Then there exists $f: N_0 \rightarrow M$ fixing $M_0$.

  We say that $M$ is \emph{locally $\kappa$-model-homogeneous} if it is $\kappa$-model-homogeneous in $N$ for every $N \in \K$ with $M \lea N$. We say that $M$ is \emph{$\kappa$-model-homogeneous} if whenever $M_0 \lea M$, $M_0 \lea N_0$ are such that $N_0 \in \K_{<\kappa}$, there exists $f: N_0 \rightarrow M$ fixing $M_0$.
\end{defin}

\begin{remark}\label{mh-rmk}
  If $\K$ has amalgamation, being locally $\kappa$-model-homogeneous is equivalent to being $\kappa$-model-homogeneous (see \cite[Lemma 3]{rosicky-sat-jsl}). Further (still assuming amalgamation), if $M$ is $\kappa$-model-homogeneous in $N$ and $N$ is $\kappa$-model-homogeneous, then $M$ is $\kappa$-model-homogeneous. 
\end{remark}

\begin{defin}
  Let $\K$ be a $\mu$-AEC and let $\kappa > \LS (\K)$. We let $\Kmh$ be the abstract class of locally $\kappa$-model-homogeneous models in $\K$, ordered with the appropriate restriction of $\lea$.
\end{defin}

Given an increasing chain $\seq{M_i : i < \delta}$ in a $\mu$-AEC $\K$, the union of the chain may not be inside $\K$. The following conditions are a very useful weakening: we require that this chain have an upper bound. This is already used in \cite{rosicky-sat-jsl}. Classes of saturated models in AECs, as well as $\mu$-CAECs (e.g.\ metric classes) are examples satisfying this property, see \cite[6.7]{mu-aec-jpaa}.

\begin{defin}\label{directed-def}
  A $\mu$-AEC $\K$ has \emph{directed bounds} if whenever $\seq{M_i : i \in I}$ is a directed system, there exists $M \in \K$ such that $M_i \lea M$ for all $i \in I$.

  We say that $\K$ \emph{has chain bounds} if this holds whenever $I$ is an ordinal.
\end{defin}

In practice, we will use chain bounds. It allows us to build locally $\kappa$-model-homogeneous models. Moreover a large cardinal axiom ({\it Vop\v enka's Principle}, see Chapter 6 of \cite{adamek-rosicky}) ensures that the class of all such models is well-behaved. In particular, it has the amalgamation property.

\begin{thm}\label{mh-thm}
  Let $\K$ be a $\mu$-AEC and let $\kappa > \LS (\K)$ be regular.

  \begin{enumerate}
  \item Assume that $\K$ has chain bounds. For any $M \in \K$, there exists $N \in \K$ such that $M \lea N$ and $N$ is locally $\kappa$-model-homogeneous. Moreover, we can take $N$ so that $| U N| \le \left(|U M| + \LS (\K)\right)^{<\kappa}$.
  \item\label{mh-thm-2} For any $N \in \K$ and any $A \subseteq U N$, there exists $M \in \K$ with $M \lea N$ such that $M$ contains $A$, $M$ is $\kappa$-model-homogeneous in $N$, and $|U M| \le \left(|A| + \LS (\K)\right)^{<\kappa}$.
  \item Assume that $\K$ has chain bounds. If either $\K$ has amalgamation or Vop\v enka's principle holds, then $\Kmh$ is a $\kappa$-AEC with chain bounds.
  \item If $\kappa$ is strongly compact, then any locally $\kappa$-model-homogeneous model $M$ is a global amalgamation base: whenever $M \lea M_\ell$, $\ell = 1,2$, there exists $N \in \K$ and $f_\ell : M_\ell \rightarrow N$ fixing $M$.
  \end{enumerate}
\end{thm}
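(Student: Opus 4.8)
The plan is to treat the four parts in order, since the later ones lean on the earlier, the only inputs beyond the $\mu$-AEC axioms being LST and, where indicated, chain bounds, amalgamation, or a large cardinal. For part (2), which needs no extra hypotheses, I would run an \emph{internal} construction inside the fixed ambient $N$. Starting (by LST) from $M_0 \lea N$ with $A \subseteq U M_0$ and $|U M_0| \le (|A| + \LS(\K))^{<\kappa}$, build a $\lea$-increasing, $\mu$-continuous chain $\seq{M_i : i < \kappa}$ with each $M_i \lea N$, where at stage $i$ I fold in, by bookkeeping, every small $N_0 \lea N$ extending some $M_0' \lea M_i$ with $M_0', N_0 \in \K_{<\kappa}$: as both $M_i$ and $N_0$ already sit inside $N$, LST provides $M_{i+1} \lea N$ with $U M_i \cup U N_0 \subseteq U M_{i+1}$, and coherence yields $M_i \lea M_{i+1}$ and $N_0 \lea M_{i+1}$. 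Taking unions at limits of cofinality $\ge\mu$ (legitimate by the concrete $\mu$-directed colimit axiom), the model $M = \bigcup_{i<\kappa} M_i$ is $\kappa$-model-homogeneous in $N$ by regularity of $\kappa$, since any small configuration over $M$ already lives over some $M_i$ and was thus folded in cofinally; the size bound is a routine $(\cdot)^{<\kappa}$ count.

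Part (1) is the crux, as \emph{without} amalgamation one cannot naively amalgamate the current approximation against an arbitrary small extension. The key observation is that \emph{local} model-homogeneity only demands realizing those small extensions that are \emph{already realizable}, i.e.\ admit \emph{some} amalgam with the current model over the relevant base. So I would build a $\lea$-increasing chain $\seq{N_i : i < \kappa}$, $N_0 = M$, taking upper bounds at limits via chain bounds, and at each stage handle—by bookkeeping over the (set of) isomorphism types of small extensions $M_0' \lea N_0'$ paired with embeddings $M_0' \to N_i$—\emph{every} task admitting a witness $W \gea N_i$, using such a $W$ to carve out (LST) a small $N_{i+1} \lea W$ containing $N_i$ and the image of $N_0'$. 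Crucially the construction never refers to any eventual ambient extension: it simply handles all tasks possessing some witness. To verify local homogeneity of $N := \bigcup_{i<\kappa} N_i$, given $N' \gea N$ and small $M_0 \lea N$, $N_0$ with $M_0 \lea N_0 \lea N'$, pick $i$ with $M_0 \lea N_i$; then $N' \gea N_i$ witnesses that this task is realizable, so it was handled, yielding $N_0 \to N_{i+1} \lea N$ over $M_0$. The size bound is again a $(\cdot)^{<\kappa}$ count over $\kappa$ stages.

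For part (3) I would verify the three $\kappa$-AEC axioms for $\Kmh$. Coherence is immediate, as $\leap{\Kmh}$ is the restriction of the coherent order $\lea$. For the chain axioms over $\kappa$-directed systems: since $\kappa > \LS(\K) \ge \mu$, such a system is $\mu$-directed, so its union lies in $\K$; and it remains locally $\kappa$-model-homogeneous because, by $\kappa$-directedness, any small configuration already lives inside a single locally $\kappa$-model-homogeneous member, where it can be folded in. The substantive point is LST: given $N \in \Kmh$ and $A \subseteq U N$, one must produce a \emph{small} locally $\kappa$-model-homogeneous $M \lea N$ containing $A$. This is exactly where amalgamation—equivalently, by Remark \ref{mh-rmk}, the collapse of local to global model-homogeneity—or Vop\v enka's principle enters: under amalgamation I run the part (1) construction \emph{internally} to $N$ (as in part (2)), keeping $M$ small and locally $\kappa$-model-homogeneous; under Vop\v enka's principle I instead invoke the resulting amalgamation/boundedness of the accessible category $\Kmh$ to the same effect. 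Finally, chain bounds for $\Kmh$ transfer from those of $\K$: bound a chain in $\K$, then extend the bound to a locally $\kappa$-model-homogeneous model via part (1).

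For part (4) I would exploit strong compactness of $\kappa$ to amalgamate $M \lea M_1$, $M \lea M_2$ over locally $\kappa$-model-homogeneous $M$. Local homogeneity gives, over each small $M_0 \lea M$, a small amalgam of the corresponding small submodels of $M_1$ and $M_2$, since each folds into $M$ over $M_0$. Indexing these small amalgams by the $\kappa$-directed poset of small submodels and forming a reduced product modulo a \emph{fine, $\kappa$-complete} ultrafilter—whose existence is the defining feature of strong compactness—yields, via the canonical maps and the fact that $\kappa$-complete ultraproducts preserve membership in $\K$ (they are $\kappa$-directed colimits of finite subproducts), a single $N \in \K$ receiving compatible $M_1, M_2 \to N$ fixing $M$. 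I expect the main obstacles to be part (1), namely engineering local homogeneity without amalgamation via the ``only realizable tasks'' device, and the LST step of part (3); the strong-compactness amalgamation of part (4) then follows the by-now-standard Makkai--Shelah/Boney pattern.
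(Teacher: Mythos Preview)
Your proposal is essentially correct and tracks the paper's approach, which in fact defers almost entirely to the literature: part (1) is ``as in \cite[Theorem~1]{rosicky-sat-jsl}'', part (2) likewise, part (3) cites Remark~\ref{mh-rmk} (amalgamation case) and \cite[4.6]{mu-aec-jpaa} (Vop\v enka case), and part (4) cites \cite[7.2]{tamelc-jsl} together with the $\mu$-AEC ultraproduct machinery. Your fleshed-out arguments match what those references do; in particular, your ``only realizable tasks'' device in part (1) is exactly the point of the cited construction, and the Makkai--Shelah/Boney ultraproduct pattern you sketch for part (4) is what \cite[7.2]{tamelc-jsl} does.

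Two small corrections on part (3). In the amalgamation case you wrote ``run the part (1) construction internally to $N$'': what you actually want is part (2), which already gives a small $M \lea N$ that is $\kappa$-model-homogeneous \emph{in $N$}; Remark~\ref{mh-rmk} then upgrades this to (global, hence local) $\kappa$-model-homogeneity because $N$, being in $\Kmh$ and living in a class with amalgamation, is itself $\kappa$-model-homogeneous. In the Vop\v enka case, the mechanism is not ``amalgamation/boundedness'' but rather that Vop\v enka's principle forces $\Kmh$---a full, isomorphism-closed subcategory of an accessible category, closed under $\kappa$-directed colimits---to be accessible, whence the LST axiom follows by the $\mu$-AEC/accessible-category correspondence (this is the content of \cite[4.6]{mu-aec-jpaa}); note one gets no explicit bound on $\LS(\Kmh)$ this way, as the paper remarks.
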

\begin{proof} \
  \begin{enumerate}
  \item Similar to \cite[Theorem 1]{rosicky-sat-jsl}.
  \item Similar to the above.
  \item All the axioms are straightforward, except for the Löwenheim-Skolem-Tarski axiom. If $\K$ has amalgamation, this follows from Theorem \ref{mh-thm} and Remark \ref{mh-rmk}. If Vop\v enka's principle holds, then by \cite[4.6]{mu-aec-jpaa}, the Löwenheim-Skolem-Tarski axiom holds (though we do not have a bound for it).
  \item As in \cite[7.2]{tamelc-jsl}, using the fact that one can take ultraproducts in $\mu$-AECs \cite[\S5]{mu-aec-jpaa}
  \end{enumerate}
\end{proof}

Now that we have tools to get amalgamation, we investigate to what extent we can work inside a big homogeneous model as in the first-order case (and, indeed, as in AECs with amalgamation):

\begin{defin}
  For $\K$ a $\mu$-AEC and $\kappa > \LS (\K)$, we say that $M \in \K$ is \emph{$\kappa$-universal} if any $M_0 \in \K_{<\kappa}$ embeds into $M$.
\end{defin}

\begin{defin}\label{monster-def}
  We say that a $\mu$-AEC $\K$ \emph{has monster models} if for any $\kappa > \LS (\K)$ there exists $M \in \K$ which is both $\kappa$-universal and $\kappa$-model-homogeneous.
\end{defin}

It turns out that building monster models requires (at least when $\mu = \aleph_1$) chain bounds:

\begin{thm}\label{monster-thm}
  Let $\K$ be a $\mu$-AEC. If $\K$ is non-empty, has amalgamation, joint embedding, and chain bounds, then $\K$ has monster models. Conversely, if $\K$ has monster models, then $\K$ has amalgamation and joint embedding; if $\mu \le \aleph_1$, moreover, $\K$ has chain bounds.
\end{thm}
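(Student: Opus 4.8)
The plan is to prove the two directions separately, building the universal and model-homogeneous objects from amalgamation, joint embedding, and chain bounds in one direction, and recovering those three properties from monster models in the other.

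\emph{Forward direction.} Assume $\K$ is non-empty with amalgamation, joint embedding, and chain bounds, and fix $\kappa > \LS(\K)$. First I would construct a $\kappa$-universal model. Up to isomorphism there is only a \emph{set} of structures of size $<\kappa$, so enumerate representatives $\seq{N_\alpha : \alpha < \lambda}$ of the iso-types in $\K_{<\kappa}$. Using joint embedding at successor steps and chain bounds at limits, build an increasing chain $\seq{P_\alpha : \alpha < \lambda}$ so that each $N_\alpha$ $\K$-embeds into some $P_\beta$; a chain bound $M^\ast$ of the whole chain then receives a $\K$-embedding of every member of $\K_{<\kappa}$, so $M^\ast$ is $\kappa$-universal. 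Next, apply Theorem \ref{mh-thm}(1) (which requires chain bounds) to obtain $N \gea M^\ast$ that is locally $\kappa$-model-homogeneous; by amalgamation and Remark \ref{mh-rmk}, $N$ is then $\kappa$-model-homogeneous. Since $\kappa$-universality is inherited by $\lea$-extensions, $N$ is both $\kappa$-universal and $\kappa$-model-homogeneous, i.e.\ a monster model for $\kappa$. As $\kappa$ was arbitrary, $\K$ has monster models.

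\emph{Reverse direction: amalgamation and joint embedding.} Assume $\K$ has monster models. For joint embedding, given $M_1, M_2 \in \K$ pick $\kappa > \LS(\K)$ with $M_1, M_2 \in \K_{<\kappa}$ and a monster $\mathfrak{C}$; $\kappa$-universality embeds both into $\mathfrak{C}$. For amalgamation, given $M_0 \lea M_1$ and $M_0 \lea M_2$, again take $\kappa$ large enough that all three lie in $\K_{<\kappa}$ and a monster $\mathfrak{C}$. Embed $M_1$ by universality, giving $f_1 \colon M_1 \to \mathfrak{C}$ with $f_1[M_0] \lea \mathfrak{C}$; transporting $M_2$ along the isomorphism $f_1 \rest M_0$ and applying $\kappa$-model-homogeneity with base $f_1[M_0]$ yields $f_2 \colon M_2 \to \mathfrak{C}$ agreeing with $f_1$ on $M_0$. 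Thus $\mathfrak{C}$ amalgamates the span.

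\emph{Reverse direction: chain bounds (the main obstacle).} Suppose $\mu \le \aleph_1$. Any chain whose length has cofinality $\ge \mu$ is $\mu$-directed, so its union lies in $\K$ by the chain axioms and is an upper bound; only chains of cofinality $<\mu$ are at issue. When $\mu \le \aleph_1$ this forces cofinality $\omega$ (the case $\mu = \aleph_0$ being vacuous), so after passing to a cofinal subchain it suffices to bound $\seq{M_n : n < \omega}$. Fix $\kappa > \LS(\K)$ above $\sup_n |U M_n|$ and a monster $\mathfrak{C}$, and construct coherent $\K$-embeddings $f_n \colon M_n \to \mathfrak{C}$ by recursion: $f_0$ exists by universality, and given $f_n$ with $f_n[M_n] \lea \mathfrak{C}$, transport $M_{n+1}$ along $f_n$ and apply $\kappa$-model-homogeneity with base $f_n[M_n]$ (legitimate since $M_n \lea M_{n+1} \in \K_{<\kappa}$) to extend $f_n$ to $f_{n+1}$. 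The family $\seq{f_n : n < \omega}$ is a cocone, so $\mathfrak{C}$ is the desired bound.

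I expect the last paragraph to be the crux, both technically and conceptually. The point requiring care is precisely why the argument stops at $\mu \le \aleph_1$: the recursion above has no nontrivial limit stages, whereas for $\mu \ge \aleph_2$ one would have to bound chains of cofinality $\omega_1$, and at a limit $i$ of cofinality $\omega < \mu$ the accumulated image $\bigcup_{j<i} f_j[M_j]$ need not be a member of $\K$ lying $\lea \mathfrak{C}$, so model-homogeneity cannot be invoked to continue. Pinning down that exactly cofinality $\omega$ must be treated, and that a single model-homogeneity step handles it, is where the real work lies.
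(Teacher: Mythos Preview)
Your proposal is correct and follows essentially the same route as the paper: the forward direction is declared ``standard'' there, and your reverse argument for chain bounds---reducing to cofinality $\omega$, then building a coherent family $\seq{f_n : n < \omega}$ into a monster via universality at stage $0$ and model-homogeneity at successors---matches the paper's proof line for line. The only cosmetic difference is that the paper finishes by passing to $N \lea \mathfrak{C}$ containing $\bigcup_n f_n[M_n]$ and renaming, whereas you invoke the cocone directly; both amount to the same thing once one remembers that $\K$ is closed under isomorphisms.
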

\begin{proof}
  If $\K$ has amalgamation, joint embedding, and has chain bounds, then the construction of a $\kappa$-universal $\kappa$-model-homogeneous model is standard.

  Conversely, if $\K$ has monster models, then it clearly has amalgamation and joint embedding. To see it has chain bounds when $\mu = \omega_1$, fix $\seq{M_i : i < \delta}$ increasing. Without loss of generality, $\delta = \cf{\delta}$. If $\delta \ge \omega_1$, we can use the chain axioms of $\aleph_1$-AECs, so assume without loss of generality that $\delta = \omega$. Let $M$ be $\kappa$-universal and $\kappa$-model-homogeneous, where $\kappa := \left(\sum_{i < \delta} |U M_i|^{<\mu} + \LS (\K)\right)^+$. We build $\seq{f_i: M_i \rightarrow M \mid i < \omega}$ increasing as follows:

  \begin{enumerate}
  \item For $i = 0$, use universality of $M$.
  \item For $i$ successor, use model-homogeneity of $M$.
  \end{enumerate}

  Now let $N \lea M$ contain $\bigcup_{i < \omega} f_i[M_i]$ and rename to obtain an upper bound to $\seq{M_i : i < \omega}$.
\end{proof}

Assuming a large cardinal axiom, then, there is a well-behaved sub-$\mu$-AEC of the original class:

\begin{cor}\label{kast-cor}
  Let $\K$ be a $\mu$-AEC, let $\kappa > \LS (\K)$ be strongly compact and assume Vop\v enka's principle. If $\K$ has chain bounds, then there exists a subclass $\K^\ast$ of $\K$ such that:

  \begin{enumerate}
  \item $\K^\ast$ is a $\kappa$-AEC.
  \item $\K^\ast$ has amalgamation, joint embedding, and chain bounds.
  \item If $\K$ is not empty, $\K^\ast$ is not empty and can be chosen to have arbitrarily large models if $\K$ has arbitrarily large models. In this case, $\K^\ast$ will have monster models.
  \item If $\K$ has joint embedding, then $\K^\ast$ is cofinal in $\K$, i.e.\ any $M \in \K$ is contained inside an $N \in \K^\ast$.
  \end{enumerate}
\end{cor}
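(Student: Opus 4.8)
The plan is to take $\K^\ast$ to be a suitable joint embedding class of $\Kmh$, the class of locally $\kappa$-model-homogeneous models of $\K$. Almost everything needed has been packaged into Theorems \ref{mh-thm} and \ref{monster-thm}; the remaining work is to assemble these pieces and then carve out a single joint embedding class.

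First I would record that $\Kmh$ is a $\kappa$-AEC with chain bounds: this is exactly Theorem \ref{mh-thm}(3), using that $\K$ has chain bounds and that Vop\v enka's principle holds. Next I would check that $\Kmh$ has amalgamation. Given $M \lea M_1$ and $M \lea M_2$ in $\Kmh$, the base $M$ is locally $\kappa$-model-homogeneous, so by Theorem \ref{mh-thm}(4)---this is where strong compactness of $\kappa$ enters---it is a global amalgamation base: there are $N_0 \in \K$ and $\K$-embeddings $f_\ell : M_\ell \to N_0$ fixing $M$. This $N_0$ need not lie in $\Kmh$, but by Theorem \ref{mh-thm}(1) it extends to some $N \in \Kmh$ with $N_0 \lea N$; transitivity of $\lea$ then gives $f_\ell[M_\ell] \lea N$, so $N$ amalgamates $M_1, M_2$ over $M$ inside $\Kmh$. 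The same extension trick shows that $\Kmh$ has arbitrarily large models whenever $\K$ does, is nonempty whenever $\K$ is, and is cofinal in $\K$ (again by Theorem \ref{mh-thm}(1)).

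To obtain joint embedding I would pass to a single joint embedding class. Using amalgamation in $\Kmh$, the relation ``$M$ and $N$ jointly embed into a common model of $\Kmh$'' is an equivalence relation, and each class is closed under $\lea$-extension, under $\lea$-substructures lying in $\Kmh$, and under the $\kappa$-directed unions of the chain axioms (since each such union is $\lea$-above its members); hence each class is itself a $\kappa$-AEC with amalgamation, joint embedding, and chain bounds. Moreover there are only set-many classes: each nonempty class contains a model of size at most $\LS(\Kmh)$ (apply the LST axiom to any member), distinct classes are disjoint, and there is only a set of isomorphism types of such small models. I would then let $\K^\ast$ be a joint embedding class chosen to have arbitrarily large models when $\K$ does---such a class exists because, with only set-many classes, they cannot all be bounded in size while $\Kmh$ is unbounded---and in the remaining case (nonempty $\K$ of bounded size) I take any nonempty class. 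This $\K^\ast$ satisfies (1) and (2). For (3), when $\K^\ast$ has arbitrarily large models it is a nonempty $\kappa$-AEC with amalgamation, joint embedding, and chain bounds, so Theorem \ref{monster-thm} furnishes monster models. For (4), if $\K$ already has joint embedding then any two members of $\Kmh$, being models of $\K$, jointly embed, so $\Kmh$ is a single class and $\K^\ast = \Kmh$, which is cofinal in $\K$.

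The main obstacle is the amalgamation step---in particular the need to re-enter $\Kmh$ after amalgamating in $\K$, which is what forces the combined use of parts (1) and (4) of Theorem \ref{mh-thm}---together with the bookkeeping showing that there are only set-many joint embedding classes, so that a cofinal one can be selected.
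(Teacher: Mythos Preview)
Your proposal is correct and follows essentially the same approach as the paper: pass to $\Kmh$, observe it is a $\kappa$-AEC with amalgamation and chain bounds, then partition into joint embedding classes and select one appropriately. You are more explicit than the paper about why $\Kmh$ has amalgamation (the paper simply cites Theorem \ref{mh-thm} en bloc, while you correctly unpack that one needs part (4) for the amalgam in $\K$ and then part (1) to re-enter $\Kmh$) and about why there are only set-many joint embedding classes, but the overall strategy is identical.
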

\begin{proof}
  By Theorem \ref{mh-thm}, $\Kmh$ is a $\kappa$-AEC with amalgamation and chain bounds. If $\K$ had joint embedding already, then $\Kmh$ also had joint embedding, and hence one can take $\K^\ast := \Kmh$. Otherwise, using the equivalence relation induced by ``embedding in a common model,'' one can partition $\Kmh$ into disjoint $\kappa$-AECs $\seq{\K_i^\ast : i \in I}$ such that each has amalgamation, joint embedding, and chain bounds. If $\K$ has arbitrarily large models, $\Kmh$ has arbitrarily large models and hence (since $I$ is a set) we can pick $i \in I$ such that $\K_i^\ast$ has arbitrarily large models. Let $\K^\ast = \K_i^\ast$.
\end{proof}

\begin{question}
  Can one remove Vop\v enka's principle from Corollary \ref{kast-cor}?
\end{question}

One approach would be to consider a stronger ordering on $\K$, e.g.\ $M \tleq N$ if and only if $M \lea N$ and $M \lee_{\Ll_{\kappa, \kappa}} N$. In this case, however, we do not know whether having chain bounds is preserved.

\section{Stable amalgamation inside a $\mu$-AEC}\label{stable-mu-aec}

In this section, we consider independence relations on $\mu$-AECs. The main result is Theorem \ref{accessible-charact}, characterizing stable independence in terms of model-theoretic local character properties of forking. All throughout, we assume:

\begin{hypothesis}
  We work inside a fixed $\mu$-AEC $\K$. We fix an invariant (Definition \ref{invariant-def}) independence relation $\nf$ on $\K$.
\end{hypothesis}

We can see $\nf$ as a relation on Galois types if we introduce some notation. A similar idea is already investigated in \cite[\S5.1]{bgkv-apal}, but there the left hand side of $\nf$ is already assumed to be an arbitrary set. Thus the situation here requires slightly more caution.

\begin{defin}
  Write $\nfcl{N_0}{A}{B}{N_3}$ if $N_0 \lea N_3$, $A \cup B \subseteq U N_3$, and there exists $M_1, M_2, M_3$ with $A \subseteq U M_1$, $B \subseteq U M_2$, $N_3 \lea M_3$, and $\nfs{N_0}{M_1}{M_2}{M_3}$.

  We say that $\gtp (\ba / B; N_3)$ \emph{does not fork over $N_0$} if $\nfcl{N_0}{\ran{\ba}}{B}{N_3}$ (it is easy to see that this does not depend on the choice of representatives, see also Fact \ref{nfcl-facts}(\ref{nfcl-k-embed})).
\end{defin}

One can think of $\nfm$ as the ``closure'' of $\nf$. The point is that we allow sets on the left and right hand side.

\begin{remark}
  It is tempting to try to define $\nfm$ in an arbitrary category as the class of diagrams

  $$  
  \xymatrix@=3pc{
    M_1 \ar[r] & M_3 \\
    M_0 \ar [ur] &
    M_2 \ar[u]
  }
  $$

  that can be extended to an independent diagram consisting of $M_0, M_1', M_2'$ and $M_3'$ such that the following commutes:

  $$  
  \xymatrix@=3pc{
    & M_1' \ar@{.>}[r] & M_3' \\
    M_1 \ar[r] \ar@{.>}[ru] & M_3 \ar@{.>}[ru] & M_2' \ar@{.>}[u]\\
    M_0 \ar@{.>}[uur] \ar@{.>}[urr] \ar [ur] & M_2 \ar[u] \ar@{.>}[ru]
  }
  $$

  However, when there is already a morphism e.g.\ from $M_0$ to $M_2$, there is no reason to believe that the resulting diagram will also commute with that morphism. This is problematic when one tries to prove, for example, that $\nfm$ is transitive (see Theorem \ref{nfcl-uq}). Thus it is unclear to us how to define $\nfm$ in a non-concrete category.
\end{remark}

Properties of $\nf$ generalize to $\nfm$ as follows:

\begin{fact}\label{nfcl-facts}
  Assume that $\nf$ is monotonic.
  
  \begin{enumerate}
  \item Let $M_0 \lea M_\ell \lea M_3$ for $\ell = 1, 2$. Then $\nfs{M_0}{M_1}{M_2}{M_3}$ if and only if $\nfcl{M_0}{M_1}{M_2}{M_3}$.
  \item\label{nfcl-k-embed} (Preservation under $\K$-embeddings) Given $M_0 \lea M_3$, $A, B \subseteq U M_3$, and $f: M_3 \rightarrow N_3$, we have that $\nfcl{M_0}{A}{B}{M_3}$ if and only if $\nfcl{f[M_0]}{f[A]}{f[B]}{N_3}$.
  \item (Monotonicity) If $\nfcl{M_0}{A}{B}{M_3}$ and $A_0 \subseteq A$, $B_0 \subseteq B$, then $\nfcl{M_0}{A_0}{B_0}{M_3}$.
  \item (Normality) $\nfcl{M_0}{A}{B}{M_3}$ if and only if $\nfcl{M_0}{A M_0}{B M_0}{M_3}$.
  \item (Base monotonicity) Assume that $\nf$ is right base-monotonic. If $\nfcl{M_0}{A}{B}{M_3}$, $M_0 \lea M_2 \lea M_3$, and $U M_2 \subseteq B$, then $\nfcl{M_2}{A}{B}{M_3}$.
  \item (Extension) Assume that $\nf$ has existence. Whenever $M \lea N$ and $p \in \gS^{<\infty} (M)$, there exists $q \in \gS^{<\infty} (N)$ extending $p$ such that $q$ does not fork over $M$.
  \item (Symmetry) Assume that $\nf$ is symmetric. Then $\nfm$ is symmetric: $\nfcl{M}{A}{B}{N}$ holds if and only if $\nfcl{M}{B}{A}{N}$ holds.
  \end{enumerate}
\end{fact}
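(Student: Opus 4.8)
My plan is to treat each clause by unwinding the definition of $\nfm$: the relation $\nfcl{M_0}{A}{B}{M_3}$ asserts the existence of witnessing models $P_1 \gea M_0$ with $A \subseteq U P_1$, $P_2 \gea M_0$ with $B \subseteq U P_2$, and $P \gea M_3, P_1, P_2$ with $\nfs{M_0}{P_1}{P_2}{P}$. Thus each property will be obtained by suitably modifying the triple $(P_1, P_2, P)$ and invoking the corresponding property of $\nf$. I would dispatch the purely formal clauses first. Monotonicity (clause 3) and normality (clause 4) are immediate: shrinking $A, B$ (resp.\ enlarging them to $A M_0, B M_0$, which are still contained in $P_1, P_2$ since $M_0 \lea P_\ell$ forces $U M_0 \subseteq U P_\ell$) leaves the same witnesses valid. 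Symmetry (clause 7) is equally direct: if $\nf$ is symmetric then $\nfs{M_0}{P_2}{P_1}{P}$ witnesses $\nfcl{M_0}{B}{A}{M_3}$.

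For clause 1, the forward implication is trivial (take $P_\ell = M_\ell$ and $P = M_3$). For the converse, given witnesses $(P_1, P_2, P)$ for $\nfcl{M_0}{U M_1}{U M_2}{M_3}$, I would first use coherence together with $U M_\ell \subseteq U P_\ell$ and $M_\ell \lea M_3 \lea P$ to conclude $M_\ell \lea P_\ell$; then left and right monotonicity of $\nf$ (Definition \ref{monot-def}) let me shrink the witnesses to $\nfs{M_0}{M_1}{M_2}{P}$; finally, since $M_3 \lea P$ exhibits the amalgam into $M_3$ as $\sim$-equivalent (Definition \ref{sim-def}) to the amalgam into $P$, closure of $\nf$ under $\sim$ yields $\nfs{M_0}{M_1}{M_2}{M_3}$. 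This ``shrink the ambient model via $\sim$'' move is exactly what lets one pass between the two notions.

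Clause 2 rests on invariance of $\nf$. In the reverse direction the witnessing supermodel $Q \gea N_3$ already extends $f[M_3]$, so after renaming $Q$ along the isomorphism $f^{-1}$ on $f[M_3]$ (using closure of $\K$ under isomorphism) I obtain witnesses over $M_3$, and invariance transports independence. The forward direction is slightly less automatic: here I must extend $f \colon M_3 \to N_3$ over the witnessing supermodel $P \gea M_3$, which I would do by amalgamating the span consisting of $M_3 \lea P$ and $f$, then pushing $(P_1, P_2)$ forward; invariance gives the required independent square over $f[M_0]$. With clause 2 available the extension property (clause 6) follows cleanly: writing $p = \gtp(\ba/M; M')$, I apply existence to the span $(M \to M', M \to N)$ to obtain $\nfs{M}{M'}{N}{M_3}$ with legs $g_1, g_2$, set $q := \gtp(g_1(\ba)/g_2[N]; M_3)$ transported back along the isomorphism $g_2$, and read off from clauses 1, 2, 3 that $q$ extends $p$ and does not fork over $M$.

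I expect clause 5 (base monotonicity) to be the main obstacle. Starting from witnesses for $\nfcl{M_0}{A}{B}{M_3}$ with $U M_2 \subseteq B$, coherence again gives $M_2 \lea P_2$, so $M_2$ lies between the base and the right-hand witness, and I would apply right base-monotonicity of $\nf$ (Definition \ref{base-monot-def}) to $\nfs{M_0}{P_1}{P_2}{P}$ to raise the base to $M_2$. The delicate point is that the categorical formulation does not keep the original models fixed: it produces an independent square $\nfs{M_2}{M_1'}{P_2}{M_4'}$ in which the left-hand witness and the ambient model are reached through embeddings $f_{1,1'}$ and $f_{4,4'}$, so that $A$, $B$, and $M_3$ appear only as their images under $f_{4,4'} \colon P \to M_4'$. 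The plan is to track these images carefully, note (from the commuting diagram of Definition \ref{base-monot-def}) that the image of $A$ still lies in $M_1'$ and that $M_3 \lea P$ maps into $M_4'$, and then invoke the already-established clause 2 for the $\K$-embedding $f_{4,4'}\rest M_3$ to descend from independence of the displaced configuration back to $\nfcl{M_2}{A}{B}{M_3}$ over the original $M_3$. Managing this bookkeeping---rather than any genuinely new independence-theoretic input---is where the real work lies.
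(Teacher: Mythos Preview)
Your proposal is correct and follows essentially the same approach as the paper: unwind the definition of $\nfm$ and invoke the corresponding property of $\nf$ for each clause, with your treatment of clause 5 being considerably more careful than the paper's one-line ``directly from the definition and the coherence axiom'' (and your clause 4 more direct than the paper's detour through clauses 1 and 2). One small caveat: your appeal to amalgamation in the forward direction of clause 2 is not strictly licensed by the stated hypotheses (only monotonicity is assumed there), but the paper's terse proof relies on it just as implicitly, and every subsequent use of this clause in the paper occurs in the presence of existence.
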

\begin{proof}
  This is essentially given by the arguments in \cite[5.1,5.4]{bgkv-apal}. For the convenience of the reader, we sketch some details:

  \begin{enumerate}
  \item If $\nfs{M_0}{M_1}{M_2}{M_3}$, then directly from the definition $\nfcl{M_0}{M_1}{M_2}{M_3}$. For the converse, use monotonicity, invariance, closure under $\sim$, the coherence axiom of $\mu$-AECs, and the definition of $\nfm$.
  \item Directly from the definitions, invariance, and closure under $\sim$.
  \item Clear from the definition of $\nfm$.
  \item The right to left direction is by monotonicity. For the left to right direction, suppose that we have $M_1, M_2, M_3'$ witnessing $\nfcl{M_0}{A}{B}{M_3}$, i.e.\ $M_0 \lea M_\ell \lea M_3'$ for $\ell = 1,2$, $M_3 \lea M_3'$, $A \subseteq U M_1$, $B \subseteq U M_2$, and $\nfs{M_0}{M_1}{M_2}{M_3'}$. By the first part, $\nfcl{M_0}{M_1}{M_2}{M_3'}$. By monotonicity, $\nfcl{M_0}{A M_0}{BM_0}{M_3'}$. Since $\nfm$ preserves $\K$-embeddings, this implies that $\nfcl{M_0}{AM_0}{BM_0}{M_3}$, as desired.
  \item Directly from the definition and the coherence axiom of $\mu$-AECs.
  \item Say $p = \gtp (\bb / M; M')$. By existence, find $N' \in \K$ and $f: M' \rightarrow N'$ such that $N \lea N'$, $f$ fixes $M$, and $\nfs{M}{f[M']}{N}{N'}$. Let $q := \gtp (f (\bb) / N; N')$.
  \item Directly from the definition.
    \end{enumerate}
\end{proof}

Uniqueness and transitivity also generalize, but the argument is more involved than the corresponding one in \cite[5.4, 5.11]{bgkv-apal} (essentially because we are taking a closure with respect to both the left and right hand side of $\nf$). We sketch a full proof here. 

\begin{thm}\label{nfcl-uq}
  Assume that $\nf$ is transitive and has existence and uniqueness.
  
  \begin{enumerate}
  \item (Uniqueness) Given $p, q \in \gS^{<\infty} (B; N)$ with $M \lea N$ and $U M \subseteq B \subseteq U N$, if $p \rest M = q \rest M$ and $p$, $q$ do not fork over $M$, then $p = q$.
  \item (Transitivity) If $M_0 \lea M_2 \lea M_3$, $\nfcl{M_0}{A}{M_2}{M_3}$ and $\nfcl{M_2}{A}{B}{M_3}$, then $\nfcl{M_0}{A}{B}{M_3}$.
  \end{enumerate}
\end{thm}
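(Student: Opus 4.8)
The plan is to prove part (1) first and then feed it into part (2). Both parts reduce a statement about the set-based closure $\nfm$ to the square-based relation $\nf$ on models, where the uniqueness property and right transitivity apply directly; the recurring difficulty, already flagged in the Remark preceding Fact \ref{nfcl-facts}, is that the sides and bases of $\nfm$ are \emph{sets} rather than models, so the witnessing models attached to $p$ and to $q$ (respectively, to the two hypotheses of transitivity) need not coincide and cannot simply be glued over a set.

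For (1), write $p = \gtp(\ba/B; N)$ and $q = \gtp(\bc/B; N)$; unravelling nonforking, fix witnessing independent squares $\nfs{M}{A_p}{B_p}{C_p}$ and $\nfs{M}{A_q}{B_q}{C_q}$ with $\ran{\ba} \subseteq U A_p$, $\ran{\bc} \subseteq U A_q$, $B \subseteq U B_p \cap U B_q$, and $N \lea C_p, C_q$. The idea is to realize both configurations inside one ambient model. Since $\nf$ has existence, $\K$ has amalgamation, so I would amalgamate $C_p$ and $C_q$ over $N$ into some $\hat C \gea N$. Because the amalgamating maps fix $N$ pointwise, they fix the set $B$ (as $B \subseteq U N$) and the tuples $\ba, \bc$ (as these lie in $\fct{<\infty}{N}$); after renaming, both squares live in $\hat C$ and genuinely share $B$, $\ba$ and $\bc$. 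Independence survives enlarging the ambient model (by invariance and closure under $\sim$, cf. Lemma \ref{fd-rmk}), so $\nfs{M}{A_p}{B_p}{\hat C}$ and $\nfs{M}{A_q}{B_q}{\hat C}$ persist. Now $\gtp(\ba/M; \hat C) = p \rest M = q \rest M = \gtp(\bc/M; \hat C)$, and both squares are nonforking amalgams over $M$; the matching of left-hand legs supplied by $p \rest M = q \rest M$, together with the uniqueness property of $\nf$, then yields an amalgam fixing $B$ and sending $\ba$ to $\bc$. This is exactly the witness that $(B, \ba, \hat C) \sim (B, \bc, \hat C)$, i.e.\ $p = q$.

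For (2), I would first turn the hypothesis $\nfcl{M_0}{A}{M_2}{M_3}$, whose right side is already the model $M_2$, into a genuine square: by Fact \ref{nfcl-facts}(1) together with right monotonicity (Lemma \ref{monot-lem}, which lets us shrink an oversized right-hand witness back down to $M_2$) obtain a model $M_1 \gea M_0$ with $A \subseteq U M_1$, $M_1 \lea M_3^+$ for some $M_3^+ \gea M_3$, and $\nfs{M_0}{M_1}{M_2}{M_3^+}$; note $M_2 \lea M_3^+$. Next, using existence, form an independent square $\nfs{M_2}{M_3^+}{M_4}{M_5}$ whose right-hand model $M_4 \gea M_2$ contains $B$. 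Here part (1) does the essential work: since $\gtp(\ba/M_2)$ computed in $M_3^+$ equals the one computed in $M_3$ (Galois types are invariant under enlarging the ambient model), and since both this freshly built square and the second hypothesis $\nfcl{M_2}{A}{B}{M_3}$ exhibit nonforking extensions of $\gtp(\ba/M_2)$ over the set $B \cup U M_2$, part (1) forces them to induce the \emph{same} type $\gtp(\ba/B)$. With the two squares sharing $M_2$ (as right-of-left and base-of-right) and $M_3^+$ (as apex-of-left and left-of-right), right transitivity of $\nf$ stacks them into $\nfs{M_0}{M_1}{M_4}{M_5}$, giving $\nfcl{M_0}{\ran{\ba}}{B}{M_5}$. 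Finally, transporting along the amalgam over $B$ furnished by part (1) and invoking preservation under $\K$-embeddings (Fact \ref{nfcl-facts}(\ref{nfcl-k-embed})) returns the conclusion to the prescribed ambient model, yielding $\nfcl{M_0}{A}{B}{M_3}$.

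The main obstacle in both parts is exactly the two-sided, set-theoretic nature of $\nfm$: the uniqueness property of $\nf$ speaks about amalgams of a \emph{fixed span of models}, whereas here the witnessing models on each side are pinned down only up to a choice of extension containing the relevant set. The device that unlocks everything is to pass to a common ambient model — amalgamating over $N$ in part (1), and identifying Galois types over $B$ via part (1) in part (2) — so that the set parameters $B$ (and the tuples $\ba, \bc$) are literally shared before the uniqueness property is applied. I expect the most delicate step to be checking that this common-ambient reduction genuinely preserves the required commutativity, which is precisely the point at which the naive definition of $\nfm$ breaks down according to the Remark.
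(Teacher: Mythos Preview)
Your sketch for part (1) has a genuine gap at precisely the step you flag as delicate. After amalgamating over $N$ you have $\nfs{M}{A_p}{B_p}{\hat C}$ and $\nfs{M}{A_q}{B_q}{\hat C}$ together with $\gtp(\ba/M)=\gtp(\bc/M)$, and you then invoke ``the uniqueness property of $\nf$'' to produce an amalgam fixing $B$ and sending $\ba$ to $\bc$. But the uniqueness property applies only to two independent amalgams of the \emph{same span}, and here the spans $(M\to A_p,\,M\to B_p)$ and $(M\to A_q,\,M\to B_q)$ differ on \emph{both} legs: the witnessing models $A_p,A_q$ need not be isomorphic over $M$ (the hypothesis $p\rest M=q\rest M$ only matches the tuples $\ba,\bc$, not the ambient models containing them), and $B_p,B_q$ share only the set $B$. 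Nothing you have written bridges that mismatch, and no single application of uniqueness will.

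The paper resolves exactly this by a chain of reductions before ever reaching types over sets. First (Claim 1) it proves the special case where the right-hand model $M_2$ is literally shared and the left-hand models are isomorphic over $M_0$: there the span really is the same up to invariance, so uniqueness of $\nf$ applies. Next (Claim 2) it shows how to absorb two left-hand witnesses $M_1,M_1'$ into a common one via an auxiliary extension step, and (Claim 3) upgrades Claim 1 to tuples lying in a single left-hand model. Only then does uniqueness for types over \emph{models} (Claim 4) follow. The paper then reverses your order of attack: it proves transitivity (Claim 5) from Claim 4 by the clean ``extend $p\rest M_0$ to a nonforking $q$, use uniqueness over models twice'' argument, and only afterward obtains uniqueness over \emph{sets} (Claim 7), via an extension lemma (Claim 6) that itself needs transitivity. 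Your plan to prove full (1) first and feed it into (2) is not obviously circular, but the missing span-alignment work in (1) is essentially Claims 1--3, and once you have those the paper's ordering is shorter. Your argument for (2) is broadly along the right lines but inherits the same vagueness: after you build $\nfs{M_2}{M_3^+}{M_4}{M_5}$ by existence, the images of $\ba$ and of $B$ in $M_5$ need not be the original $\ba$ and $B$ in $M_3$, so the appeal to part (1) to identify types ``over $B$'' is comparing types in different ambient models over differently embedded copies of $B$; the paper avoids this by staying with types over models until the very end.
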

\begin{proof}
  We proceed via a series of claims.

  \underline{Claim 1}: Assume that $\nfs{M_0}{M_1}{M_2}{M_3}$ and $\nfs{M_0}{M_1'}{M_2}{M_3'}$. Let $\bb, \bb'$ be enumerations of $M_1, M_1'$ respectively. If $\gtp (\bb / M_0; M_3) = \gtp (\bb' / M_0; M_3')$ (i.e.\ the enumerations induce an isomorphism between $M_1$ and $M_1'$ fixing $M_0$), then $\gtp (\bb / M_2; M_3) = \gtp (\bb' / M_2; M_3')$.

  \underline{Proof of Claim 1}: This follows directly from the uniqueness property as in \cite[12.6]{indep-aec-apal}. $\dagger_{\text{Claim 1}}$.

  \underline{Claim 2}: Assume that $\nfs{M_0}{M_2}{M_1}{M_3}$ and $\nfs{M_0}{M_2}{M_1'}{M_3}$. There exists $M_1''$, $M_3'$, $f: M_1' \rightarrow M_1''$ such that $f$ fixes $M_0$, $M_1 \lea M_1'' \lea M_3'$, $M_3 \lea M_3'$, and $\nfs{M_0}{M_2}{M_1''}{M_3'}$.

  \underline{Proof of Claim 2}: Let $\ba$ be an enumeration of $M_2$. Let $p := \gtp (\ba / M_0; M_3)$. By extension, let $q \in \gS^{<\infty} (M_3)$ be such that $q$ extends $p$ and $q$ does not fork over $M_0$. Say $q = \gtp (\ba' / M_3; M_3'^\ast)$, and let $M_2'$ be the model enumerated by $\ba'$. We have that $\nfs{M_0}{M_2'}{M_3}{M_3^\ast}$. By monotonicity, $\nfs{M_0}{M_2'}{M_1}{M_3^\ast}$. By Claim 1 (where the role of $M_1$ and $M_2$ is reversed), $\gtp (\ba / M_1; M_3) = \gtp (\ba' / M_1; M_3^\ast)$. Let $g: M_3^\ast \rightarrow M_3'$ with $M_3 \lea M_3'$ be such that $g (\ba') = \ba$ and $g$ fixes $M_1$. Let $M_1'' := g[M_3]$ and let $f := g \rest M_1'$. This is as desired. $\dagger_{\text{Claim 2}}$.

  \underline{Claim 3}: Assume that $\nfs{M_0}{M_1}{M_2}{M_3}$ and $\bb_1, \bb_2 \in \fct{<\infty}{M_1}$ are such that $\gtp (\bb_1 / M_0; M_3) = \gtp (\bb_2 / M_0; M_3)$. Then $\gtp (\bb_1 / M_2; M_3) = \gtp (\bb_2 / M_2; M_3)$.

  \underline{Proof of Claim 3}: Let $f: M_3 \rightarrow M_3'$, $M_3 \lea M_3'$ be such that $f$ fixes $M_0$ and $f (\bb_1) = \bb_2$. Using extension, find $g: M_3' \rightarrow M_3''$ that fixes $M_1$, such that $M_3 \lea M_3''$ and $\nfs{M_1}{g[M_3']}{M_3}{M_3''}$. By transitivity (for $\nf$), $\nfs{M_0}{g[M_3']}{M_2}{M_3}$. Let $h := gf$. We have in particular that $\nfs{M_0}{h[M_1]}{M_2}{M_3}$. Letting $\ba$ be an enumeration of $M_1$ and using Claim 1, this means that $\gtp (\ba / M_2; M_3) = \gtp (h (\ba) / M_2; M_3)$. In particular, $\gtp (\bb_1 / M_2; M_3) = \gtp (\bb_2 / M_2; M_3)$. $\dagger_{\text{Claim 3}}$.

  \underline{Claim 4 (uniqueness for types over models)}: Let $M_0 \lea M_2$ and let $p_1, p_2 \in \gS^{<\infty} (M_2)$ be given such that both do not fork over $M_0$ and $p_1 \rest M_0 = p_2 \rest M_0$. Then $p_1 = p_2$.

  \underline{Proof of Claim 4}: Say $p_\ell = \gtp (\bb_\ell / M_2; N_\ell)$. Without loss of generality $N := N_1 = N_2$). By definition of nonforking of types, $\nfcl{M_0}{\bb_\ell}{M_2}{N}$. Expanding the definition of $\nfm$ and extending $N$ if necessary, we have that for some $M_1^\ell$ containing $\bb_\ell$, $\ell = 1,2$, $\nfs{M_0}{M_1^\ell}{M_2}{N}$. By Claim 2 applied to $\nf^d$, there exists $M_1'$, $N'$, and $f: M_1^2 \rightarrow M_1'$ such that $f$ fixes $M_0$, $M_1^1 \lea M_1'$, $N \lea N'$, and $\nfs{M_0}{M_1'}{M_2}{N'}$. Let $\bb_2' := f (\bb_2)$. By Claim 3, $\gtp (\bb_1 / M_2; N') = \gtp (\bb_2' / M_2; N')$. Moreover, Claim 1 implies that $\gtp (\bb_2' / M_2; N') = \gtp (\bb_2 / M_2; N')$, so we get the desired result. $\dagger_{\text{Claim 4}}$

  \underline{Claim 5 (transitivity)}: Let $M_0 \lea M_1 \lea M_2$ and let $p \in \gS^{<\infty} (M_2)$. If $p$ does not fork over $M_1$ and $p \rest M_1$ does not fork over $M_0$, then $p$ does not fork over $M_0$ (note that this implies the transitivity statement of the theorem, since by definition of $\nfm$ we can always extend $B$ so that it is a model extending $M_2$).

  \underline{Proof of Claim 5}: By extension, let $q \in \gS^{<\infty} (M_2)$ extend $p \rest M_0$ such that $q$ does not fork over $M_0$. By monotonicity, $q \rest M_1$ does not fork over $M_0$. By Claim 4 (applied to $q \rest M_1$ and $p \rest M_1)$, $p \rest M_1 = q \rest M_1$. By base monotonicity, $q$ does not fork over $M_1$. By Claim 4 again, $p = q$. $\dagger_{\text{Claim 5}}$

  \underline{Claim 6}: Let $p \in \gS^{<\infty} (B; N)$ with $M \lea N$ and $U M \subseteq B \subseteq U N$. If $p$ does not fork over $M$, then there exists $q \in \gS^{<\infty} (N)$ such that $q$ extends $p$ and $q$ does not fork over $M$.

  \underline{Proof of Claim 6}: First note that this is not quite the same as the extension property, since we start of with a type that already does not fork over a smaller base. Write $p = \gtp (\ba / B; N)$. Fix $M', N'$ such that $M \lea M' \lea N'$, $N \lea N'$, $B \subseteq U M'$, and $\nfcl{M}{\ba}{M'}{N'}$. Let $p' := \gtp (\ba / M'; N')$. Note that $p'$ extends $p$ and $p'$ does not fork over $M$. Find $q' \in \gS^{<\infty} (N')$ such that $q'$ extends $p'$ and $q'$ does not fork over $M'$. By Claim 5 (transitivity), $q'$ does not fork over $M$. Let $q := q' \rest N$. By monotonicity, it is as desired. $\dagger_{\text{Claim 7}}$
  
  \underline{Claim 7 (uniqueness)}:  Given $p_1, p_2 \in \gS^{<\infty} (B; N)$ with $M \lea N$ and $U M \subseteq B \subseteq U N$, if $p_1 \rest M = p_2 \rest M$ and $p_1$, $p_2$ do not fork over $M$, then $p_1 = p_2$.

  \underline{Proof of Claim 7}: Using Claim 6, find $q_\ell \in \gS^{<\infty} (N)$ such that $q_\ell$ does not fork over $M$ and extends $p_\ell$ for $\ell = 1,2$. Thus in particular $q_1 \rest M = p_1 \rest M = p_2 \rest M = q_2 \rest M$. By Claim 4 (uniqueness for types over models), $q_1 = q_2$. In particular, $p_1 = q_1 \rest B = q_2 \rest B = p_2$. $\dagger_{\text{Claim 7}}$
\end{proof}

We can now state the local character property of forking: every type does not fork over a ``small'' set:

\begin{defin}\label{lc-def}
  We say that $\nf$ has \emph{right local character} if for each cardinal $\alpha$, there exists a cardinal $\lambda$ (depending on $\alpha$) such that for any $p \in \gS^{\alpha} (M)$ there exists $M_0 \lea M$ with $|U M_0| \le \lambda$ and $p$ not forking over $M_0$. Equivalently---avoiding any mention of Galois types---given $M \lea N$ and $N_1 \lea N$, if $N_1$ has cardinality at most $\alpha$ there exists $M_0$ of cardinality at most $\lambda$ and $N_1', N'$, such that $N_1 \lea N_1' \lea N'$, $N \lea N'$, and $\nfs{M_0}{N_1'}{M}{N'}$.

  We say that $\nf$ has \emph{left local character} if $\nf^d$ has right local character. We say that $\nf$ has \emph{local character} if it has both left and right local character.
\end{defin}

The idea of the definition of local character is as follows: given $M \lea N$ and $N_1 \lea N$, we may want to write that $\nfs{N_0}{N_1}{M}{N}$, where $N_0$ is the pullback of $N_1$ and $M$ over $N$. However pullbacks may not exist (and even when they do, the desired independence may not hold, see Example \ref{indep-examples}(\ref{acf-example})). Thus we say instead that we can close the intersection of $N_1$ and $M$ to a small (meaning of cardinality depending only on $\alpha$, the size of $N_1$) model $M_0$ so that $N_1$ and $M$ do not interact over $M_0$. Importantly, we still require that $M_0 \lea M$.

We will also study the following locality property, introduced as the ``model-witness property'' in \cite[3.12(9)]{indep-aec-apal}. When $\theta = \aleph_0$ (as in the first-order case), it is often called---somewhat confusingly---the finite character property of forking.

\begin{defin}\label{wp-def}
  Let $\theta$ be an infinite cardinal. We say that $\nf$ has the \emph{right $(<\theta)$-witness property} if $\nfs{M_0}{M_1}{M_2}{M_3}$ holds whenever $M_0 \lea M_\ell \lea M_3$, $\ell = 1,2$, and $\nfcl{M_0}{M_1}{A}{M_3}$ for all $A \subseteq U M_2$ with $|A| < \theta$.

  We say that $\nf$ has the \emph{left $(<\theta)$-witness property} if $\nf^d$ has it and we say that $\nf$ has the \emph{$(<\theta)$-witness property} if it has both the left and right one. When $\theta$ is omitted, we mean that the witness property holds for some $\theta$.
\end{defin}

The witness property is known to follow from appropriate tameness assumptions. Recall from \cite[2.8]{bg-apal} that $\K$ is \emph{fully $(<\theta)$-tame and short} if Galois types are determined by the restrictions of their domain and variables of size less than $\theta$. This holds in particular if $\theta \ge \kappa$, for some $\kappa > \LS (\K)$ strongly compact \cite[5.5]{mu-aec-jpaa} or if $\theta \ge \mu$ and $\K$ is a universal $\mu$-AEC (see the argument inside Remark \ref{nfastwitness}). For the convenience of the reader, we replicate the precise statement and proof of the witness property from full tameness and shortness here:

\begin{fact}[4.5 in \cite{indep-aec-apal}]\label{wp-fact}
  Let $\theta$ be an infinite cardinal and assume that $\nf$ is transitive and has existence and uniqueness. If $\K$ is fully $(<\theta$)-tame and short, then $\nf$ has the $(<\theta)$-witness property.
\end{fact}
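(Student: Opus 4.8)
The plan is to prove the \emph{right} $(<\theta)$-witness property; the left one then follows by applying the identical argument to $\nf^d$, which is again transitive and has existence and uniqueness (by Remark~\ref{existence-dual} and the remarks following the uniqueness and transitivity definitions), and for which the hypothesis of full $(<\theta)$-tameness and shortness---being symmetric in domain and variables---is unchanged. At the outset I note that our standing hypotheses unlock the needed machinery: by Lemma~\ref{monot-lem} and its dual, $\nf$ is monotonic, so Fact~\ref{nfcl-facts} and the uniqueness statement Theorem~\ref{nfcl-uq}(1) both apply, and since $\nf$ has existence, $\K$ has amalgamation.

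So I fix $M_0 \lea M_\ell \lea M_3$ for $\ell = 1,2$, assuming $\nfcl{M_0}{M_1}{A}{M_3}$ for every $A \subseteq U M_2$ with $|A| < \theta$, and must derive $\nfs{M_0}{M_1}{M_2}{M_3}$. Let $\bb$ enumerate $M_1$ and put $p := \gtp(\bb / M_2; M_3)$; by Fact~\ref{nfcl-facts}(1) it suffices to show that $p$ does not fork over $M_0$. First I would manufacture a comparison type: applying the extension property, Fact~\ref{nfcl-facts}(6), to $p \rest M_0 \in \gS^{<\infty}(M_0)$ and $M_0 \lea M_2$, I obtain $q \in \gS^{<\infty}(M_2)$ extending $p \rest M_0$ with $q$ not forking over $M_0$; write $q = \gtp(\bb' / M_2; N')$ with $M_2 \lea N'$. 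The problem now reduces to proving $p = q$, since then $p$ inherits nonforking over $M_0$ from $q$.

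To get $p = q$ I invoke full $(<\theta)$-tameness and shortness: it suffices to check $\gtp(\bb \rest w / A; M_3) = \gtp(\bb' \rest w / A; N')$ for every $A \subseteq U M_2$ and every set of indices $w$ with $|A|, |w| < \theta$. Fix such $A, w$ and consider the set-based types $p_{w,A} := \gtp(\bb \rest w / A M_0; M_3)$ and $q_{w,A} := \gtp(\bb' \rest w / A M_0; N')$ over $A M_0 \supseteq U M_0$. I would then verify three things: (i) $p_{w,A}$ does not fork over $M_0$---this is the one place the hypothesis enters, as $\nfcl{M_0}{M_1}{A}{M_3}$ yields $\nfcl{M_0}{\bb \rest w}{A M_0}{M_3}$ via monotonicity and normality, Fact~\ref{nfcl-facts}(3),(4); (ii) $q_{w,A}$ does not fork over $M_0$, again by monotonicity and normality from the fact that $q$ does not fork over $M_0$; and (iii) $p_{w,A} \rest M_0 = q_{w,A} \rest M_0$, because $q$ extends $p \rest M_0$.

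Finally I amalgamate $M_3$ and $N'$ over $M_2$ into a single $N$ (possible by amalgamation); preservation under $\K$-embeddings, Fact~\ref{nfcl-facts}(2), keeps (i)--(iii) intact, so $p_{w,A}, q_{w,A}$ now both lie in $\gS^{<\infty}(A M_0; N)$, do not fork over $M_0$, and agree over $M_0$. Theorem~\ref{nfcl-uq}(1) then gives $p_{w,A} = q_{w,A}$, and restricting to $A$ yields the required equality of $(w,A)$-restrictions; hence $p = q$ and we are done. I expect the only genuine obstacle to be bookkeeping: tracking the set-based relation $\nfm$ cleanly through the reductions and amalgamating so that Theorem~\ref{nfcl-uq}(1), stated for types in a fixed ambient model, actually applies. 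The substantive content beyond \cite{bgkv-apal} is already absorbed into Fact~\ref{nfcl-facts} and Theorem~\ref{nfcl-uq}, so the present argument is essentially organizational.
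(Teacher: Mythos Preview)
Your proposal is correct and follows essentially the same route as the paper: enumerate $M_1$ as $\bb$, use extension to produce a nonforking $q$ over $M_2$ extending $p \rest M_0$, then reduce $p = q$ to small restrictions via tameness/shortness and invoke the uniqueness clause of Theorem~\ref{nfcl-uq}. Your version is in fact slightly more careful than the paper's---you restrict both domain and variables (the paper restricts only the domain, relying implicitly on the observation in the subsequent Remark that only tameness for long types is needed), you explicitly amalgamate $M_3$ and $N'$ into a common ambient model before applying Theorem~\ref{nfcl-uq}(1), and you use normality to ensure the base set contains $U M_0$ as that theorem requires---but these are bookkeeping refinements rather than a different strategy.
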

\begin{proof}
  We prove the right $(<\theta)$-witness property. The left version follows by applying the same argument to $\nf^d$. Assume that $M_0 \lea M_\ell \lea M_3$, $\ell = 1,2$, and $\nfcl{M_0}{M_1}{A}{M_3}$ for all $A \subseteq U M_2$ with $|A| < \theta$. Let $\bb$ be an enumeration of $M_1$ and let $p := \gtp (\bb / M_2;M_3)$. By extension for $\nfm$, let $q \in \gS^{<\infty} (M_2)$ extend $p \rest M_0$ and not fork over $M_0$. We show that $p = q$, which is enough since $\nfm$ respects $\K$-embeddings. By full tameness and shortness, it is enough to see that $p \rest A = q \rest A$ for any $A$ of size less than $\theta$. Fix such an $A$. We know that $p \rest A$ does not fork over $M_0$ by assumption, and $q \rest A$ also does not fork over $M_0$ by monotonicity. Therefore by uniqueness for $\nfm$ (Theorem \ref{nfcl-uq}), $p \rest A = q \rest A$, as desired.
\end{proof}
\begin{remark}
  The argument does not need the full strength of tameness and shortness (in fact it only uses tameness, albeit for types of arbitrary length).
\end{remark}

\begin{example}\label{nfastwitness}
	Recall that for $\ck$ a coregular locally presentable category, we can define an independence relation $\nf^{\ast}$ on $\ck_{reg}$ consisting of effective pullback squares (see Definition~\ref{nf1-def}).  We note that $\nf^{\ast}$ has the witness property. To see this, use the fact that $\ck_{reg}$ is locally multipresentable, and that any locally multipresentable category whose morphisms are monomorphisms is fully tame and short (by the equivalence from \cite[5.9]{multipres-pams} and the proof of Boney's theorem that universal classes are tame, see \cite[3.7]{ap-universal-apal}), and hence Fact \ref{wp-fact} applies.
\end{example}

The witness property also makes the local character property more uniform: the cardinal $\lambda$ from Definition \ref{lc-def} becomes a simple function of $\alpha$:

\begin{lem}\label{wp-lc}
  Assume that $\nf$ is right transitive, and satisfies existence, uniqueness, and the left $(<\theta)$-witness property. If $\nf$ has right local character, then there exists a cardinal $\lambda_0$ such that for each $p \in \gS^\alpha (M)$, there exists $M_0 \in \K$ with $M_0 \lea M$, $|U M_0| \le \lambda_0 + \alpha^{<(\theta + \mu)}$ and $p$ not forking over $M_0$.
\end{lem}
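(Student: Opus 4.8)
The plan is to use the left $(<\theta)$-witness property to reduce control of $p$ to control of its restrictions to subtuples of length $<\theta$, each of which is handled by right local character, and then to glue the resulting small bases into a single base of the stated size. Concretely, fix $p = \gtp(\ba/M;N) \in \gS^\alpha(M)$, so $M \lea N$ and $\ba \in \fct{\alpha}{N}$, and set $\lambda_1 := \sup\{\lambda(\gamma) : \gamma < \theta\}$, where $\lambda(\gamma)$ is the cardinal provided by right local character (Definition \ref{lc-def}) for length $\gamma$; this is a fixed cardinal independent of $\alpha$. For each subtuple $\bb$ of $\ba$ with $|\bb| < \theta$, right local character applied to $\gtp(\bb/M;N)$ yields $M_0^\bb \lea M$ with $|U M_0^\bb| \le \lambda_1$ and $\nfcl{M_0^\bb}{\bb}{M}{N}$.

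Next I would assemble a common base. The set $X := \bigcup\{U M_0^\bb : \bb \subseteq \ba,\ |\bb| < \theta\}$ lies in $U M$ and, as there are at most $\alpha^{<\theta}$ such subtuples, satisfies $|X| \le \lambda_1 \cdot \alpha^{<\theta}$. Applying the Löwenheim-Skolem-Tarski axiom to $X \subseteq U M$ produces $M_0 \lea M$ with $X \subseteq U M_0$ and $|U M_0| \le |X|^{<\mu} + \LS(\K)$, from which I would read off $|U M_0| \le \lambda_0 + \alpha^{<(\theta+\mu)}$ for a suitable fixed $\lambda_0$ absorbing $\lambda_1^{<\mu}$ and $\LS(\K)$. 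Since $M_0^\bb \lea M_0 \lea M$ for every $\bb$, right base-monotonicity---available because $\nf$ is right transitive with existence and uniqueness, hence right base-monotonic by Lemma \ref{base-monot-lem}, so that the relevant instance of Fact \ref{nfcl-facts}(5) applies---upgrades each $\nfcl{M_0^\bb}{\bb}{M}{N}$ to $\nfcl{M_0}{\bb}{M}{N}$. Thus $\gtp(\bb/M;N)$ does not fork over $M_0$ for every subtuple $\bb$ of length $<\theta$, and the left $(<\theta)$-witness property (Definition \ref{wp-def}) then gives that $p = \gtp(\ba/M;N)$ itself does not fork over $M_0$, as required.

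The step I expect to be most delicate is this last application of the witness property to the closure relation $\nfm$ rather than to $\nf$ between genuine models. Definition \ref{wp-def} is phrased for $\nfs{M_0}{M_1}{M_2}{M_3}$ with $M_1$ an actual model, so to read off nonforking of the tuple $\ba$ I must realize $\ba$ inside some $M_1 \gea M_0$ and verify the hypothesis for every $<\theta$-sized $A \subseteq U M_1$---not merely for subtuples of $\ba$; I would take $M_1$ generated by $M_0 \cup \ran{\ba}$ and reduce a general such $A$ to subtuples of $\ba$ via monotonicity (Fact \ref{nfcl-facts}(3)). It is worth stressing that only right transitivity is available, so everything must be routed through right monotonicity and right base-monotonicity, and the full-transitivity results for $\nfm$ of Theorem \ref{nfcl-uq} must be avoided. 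A second, purely arithmetic, point requiring care is the passage from the naive estimate $|X|^{<\mu} \le (\alpha^{<\theta})^{<\mu}$ to the sharp exponent in $\alpha^{<(\theta+\mu)} = \alpha^{<\theta}\cdot\alpha^{<\mu}$: this is immediate when $\mu = \aleph_0$ (where the $(\cdot)^{<\mu}$-closure is trivial) but for larger $\mu$ it is exactly what pins down the precise value of $\lambda_0$ and the shape of the final bound.
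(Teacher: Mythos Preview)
Your overall approach is exactly the paper's: for each $I\subseteq\alpha$ with $|I|<\theta$, pick a small $M_I\lea M$ over which $p^I$ does not fork; collect all the $M_I$ into a single $M_0\lea M$ via the LST axiom; then appeal to base monotonicity and the left $(<\theta)$-witness property. The paper's own proof is just as terse at the final step (``By base monotonicity and the left $(<\theta)$-witness property, $M_0$ is as desired'').

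You are right to flag that last step as the delicate one, but your proposed resolution does not go through. Definition~\ref{wp-def} is stated for a genuine model $M_1$ with $M_0\lea M_1\lea M_3$ and requires $\nfcl{M_0}{A}{M}{M_3}$ for \emph{every} $A\subseteq U M_1$ of size $<\theta$. If you obtain $M_1\lea N$ from $M_0\cup\ran{\ba}$ by LST, then $U M_1$ will in general contain elements outside $U M_0\cup\ran{\ba}$; for an $A$ meeting such elements, monotonicity (Fact~\ref{nfcl-facts}(3)) points the wrong way, and normality only disposes of the part of $A$ lying in $U M_0$. So you cannot ``reduce a general such $A$ to subtuples of $\ba$'' in the way you suggest, and as you yourself note, Theorem~\ref{nfcl-uq} is unavailable under only right transitivity.

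One clean way to close the gap is to imitate the LST-axiom construction inside the proof of Theorem~\ref{accessible-charact}: alongside the growing base in $M$, build an increasing chain $N_i\lea N$ with $\ran{\ba}\subseteq U N_0$ and $M_i\lea N_i$, where at each successor step you first enlarge the base inside $M$ so that every $(<\theta)$-sized subset of $U N_i$ has a local-character witness contained in it, and then enlarge $N_{i+1}$ inside $N$ to contain this new base. Taking unions at a stage of cofinality at least $\theta+\mu$ yields $M_\infty\lea N_\infty\lea N$ for which the full hypothesis of the left witness property is verified, giving $\nfs{M_\infty}{N_\infty}{M}{N}$ and hence that $p$ does not fork over $M_\infty$. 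This interleaving also makes the cardinal arithmetic you worry about completely explicit.
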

\begin{proof}
  Let $\lambda_0$ be such that $\lambda_0 = \lambda_0^{<\mu} + \LS (\K)$ and satisfy Definition \ref{lc-def} with $\alpha, \lambda$ there standing for $\theta, \lambda_0$ here. Now given any $p \in \gS^{\alpha} (M)$, for any $I \subseteq \alpha$ with $|I| < \theta$, pick $M_I \lea M$ of cardinality at most $\lambda_0$ such that $p^I$ does not fork over $M_I$. In the end, let $A := \bigcup_{I \subseteq \alpha, |I| < \theta} M_I$. Note that $|A| \le \lambda_0 + \alpha^{<\theta}$, so one can pick $M_0 \lea M$ containing $A$ of size at most $|A|^{<\mu} + \LS (\K)$. By base monotonicity and the left $(<\theta)$-witness property, $M_0$ is as desired. 
\end{proof}

We now turn to the main result of this section. If $\nf$ is reasonable, accessibility of $\nf$ is equivalent to the conjunction of the witness and local character properties. We will use the following auxiliary class:

\begin{defin}
  Assume that $\nf$ is monotonic and right transitive. Let $\Knf = (\Knfnobf, \leanf)$ be the obvious coding of $\cknf$ into an abstract class: the vocabulary is $\tau (\Knf) = \tau (\K) \cup \{P\}$, where $P$ is a unary predicate, and we think of the members of $\Knfnobf$ as pairs of $\tau (\K)$-structures $(M, N)$ satisfying $M \lea N$ (so the elements of $M$ are the ones satisfying the predicate). We order $\Knf$ by $(M_0, M_1) \leanf (M_2, M_3)$ if and only if $\nfs{M_0}{M_1}{M_2}{M_3}$. 
\end{defin}
\begin{remark}\label{coherence-rmk}
  Assume that $\nf$ is monotonic and right transitive. Then $\Knf$ is isomorphic (as a category) to $\cknf$, so we need not distinguish between the two. We have that $\Knf$ is closed under isomorphisms, is a partial order, and $\Knf$ satisfies the coherence axiom: if $(M_0, M_1) \subseteq (M_0', M_1') \leanf (M, N)$ and $(M_0, M_1) \leanf (M, N)$, then by monotonicity (Lemma \ref{descent}) $(M_0, M_1) \leanf (M_0', M_1')$. However in $\Knf$, $\leanf$ may not refine the $\tau (\Knf)$-substructure relation: we could have $(M_0, M_1) \leanf (M, N)$ but $U M_0 \subsetneq U M \cap U M_1$.
\end{remark}

\begin{thm}[Characterization of stable independence]\label{accessible-charact}
  Let $\K$ be a $\mu$-AEC and let $\nf$ be an invariant independence relation on $\K$. Assume that $\nf$ is transitive, and has existence and uniqueness. The following are equivalent:

  \begin{enumerate}
  \item\label{accessible-charact-1} $\nf$ is accessible.
  \item\label{accessible-charact-15} For some $\lambda$, $\Knf$ satisfies all the axioms in the definition of a $\lambda$-AEC, except that $\leanf$ may not refine the $\tau (\Knf)$-substructure relation (see Remark \ref{coherence-rmk}).
  \item\label{accessible-charact-2} $\nf$ has the witness and local character properties.
  \end{enumerate}
\end{thm}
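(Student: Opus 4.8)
The plan is to prove the cycle $(1)\Rightarrow(3)\Rightarrow(2)\Rightarrow(1)$, throughout exploiting that the standing hypotheses (transitivity, existence, uniqueness) and statement (3) are \emph{self-dual}, so that any one-sided argument applies verbatim to $\nf^d$. First I record the consequences of the hypotheses that I will use freely: by Lemmas \ref{monot-lem} and \ref{base-monot-lem}, $\nf$ is monotonic and base-monotonic, so the closure $\nfm$ inherits monotonicity, normality, base monotonicity, extension (Fact \ref{nfcl-facts}) and, via Theorem \ref{nfcl-uq}, uniqueness and transitivity. Recall also (Remark \ref{coherence-rmk}) that $\Knf\cong\cknf$ is coherent and that $\leanf$ respects universes even though it need not refine the $\tau(\Knf)$-substructure relation.

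For $(1)\Rightarrow(3)$, assume $\cknf$ is $\lambda$-accessible. By Remark \ref{acc2} the inclusion $\cknf\to\ck^2$ preserves $\lambda$-directed colimits, so these colimits in $\Knf$ are concrete (componentwise unions); hence Lemma \ref{mu-aec-technical-2} applies to $\Knf$ (with its ``$\mu$'' and ``$\theta$'' both set to $\lambda$), yielding a closed unbounded class of cardinals along which presentability in $\Knf$ coincides with cardinality. Right local character then falls out: given $M\lea N$ and $N_1\lea N$ with $|N_1|<\lambda$, write $(M,N)$ as a $\lambda$-directed colimit of $\lambda$-presentable (hence small) objects $(M_0^i,M_1^i)\leanf(M,N)$; concreteness gives $N_1\subseteq UM_1^i$ for some $i$, and then $\nfs{M_0^i}{M_1^i}{M}{N}$ together with monotonicity shows that $\gtp(N_1/M;N)$ does not fork over the small model $M_0^i\lea M$ (for general $\alpha$ one lets the output bound depend on $\alpha$). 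The right witness property follows by a continuity argument: resolving $M_2$ as a $\lambda$-directed union of small $M_2^i\supseteq UM_0$ and using that small-subset nonforking gives $\nfs{M_0}{M_1}{M_2^i}{M_3}$ for each $i$ (Fact \ref{nfcl-facts}(1)), the cocone $\seq{(M_0,M_1)\to(M_2^i,M_3)}$ induces the morphism $(M_0,M_1)\to(M_2,M_3)$ in $\cknf$, i.e.\ $\nfs{M_0}{M_1}{M_2}{M_3}$. Applying the same reasoning to $\ck_{\NF^d}$ (accessible, since (1) is two-sided) yields the left-hand versions, giving (3).

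For $(3)\Rightarrow(2)$, I verify the $\lambda$-AEC axioms for $\Knf$, with $\lambda$ supplied by Lemma \ref{wp-lc}. Coherence is Remark \ref{coherence-rmk}. For the chain axioms, take a $\mu$-directed $\leanf$-increasing system with componentwise unions $(M_0,M_1)$: that $(M_0^i,M_1^i)\leanf(M_0,M_1)$, i.e.\ $\nfs{M_0^i}{M_1^i}{M_0}{M_1}$, follows from the \emph{right} witness property, since every small $A\subseteq UM_0$ lies in some $M_0^j$, where monotonicity and preservation under $\K$-embeddings (Fact \ref{nfcl-facts}) give $\nfcl{M_0^i}{M_1^i}{A}{M_1}$; the upper-bound clause follows dually from base monotonicity (Fact \ref{nfcl-facts}(5)) and the \emph{left} witness property, which is exactly why (3) includes both sides. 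The LST axiom is the iterated closure construction: start with a small $M_1^{*,0}\ni A$, use right local character (with the uniform bound of Lemma \ref{wp-lc}) to find a small base over which $M_1^{*,0}$ does not fork, absorb that base into $M_1^{*,1}$, iterate, and take unions; Fact \ref{nfcl-facts}(1) converts the resulting $\nfm$-statement into $\nfs{M_0^*}{M_1^*}{M_0}{M_1}$, so $(M_0^*,M_1^*)\leanf(M_0,M_1)$ is the required small subobject.

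For $(2)\Rightarrow(1)$, since $\Knf\cong\cknf$, statement (2) says $\cknf$ satisfies the chain and LST axioms of a $\lambda$-AEC; the accessibility half of the $\mu$-AEC/accessible-category correspondence (Fact \ref{mu-aec-acc}) uses only these axioms, not substructure-refinement, so $\cknf$ is $\lambda^+$-accessible. The corresponding statement for $\nf^d$ gives $\ck_{\NF^d}$ accessible, whence $\nf$ is accessible; I would obtain it by running the preceding implications for $\nf^d$, noting that the chain axioms of $\Knf$ already encode \emph{both} witness properties and right local character, and recovering left local character from these via the symmetry phenomenon (right local character together with uniqueness and extension). The principal obstacle is $(1)\Rightarrow(3)$: the real work is bridging the purely categorical notion of $\lambda$-presentability in $\cknf$ with model-theoretic cardinality—precisely where $\leanf$ fails to refine substructure—which is what Lemma \ref{mu-aec-technical-2} is engineered to handle. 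A secondary point requiring care is matching the one-sided statement (2) to the symmetric statements (1) and (3), which I resolve by the self-duality of the hypotheses and of (3).
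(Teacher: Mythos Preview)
Your cycle runs $(1)\Rightarrow(3)\Rightarrow(2)\Rightarrow(1)$, whereas the paper argues $(1)\Rightarrow(2)\Rightarrow(3)\Rightarrow(1)$. Your $(3)\Rightarrow(2)$ and $(2)\Rightarrow(1)$ are essentially the paper's $(3)\Rightarrow(1)$ step split in two, and the arguments you sketch there match the paper's. The genuine divergence---and the genuine gap---is your direct attack on $(1)\Rightarrow(3)$, specifically the witness property.

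Your witness argument resolves $M_2$ as a $\lambda$-directed union of small $M_2^i\supseteq UM_0$, claims $\nfs{M_0}{M_1}{M_2^i}{M_3}$ via Fact~\ref{nfcl-facts}(1), and then says the cocone $\seq{(M_0,M_1)\to(M_2^i,M_3)}$ in $\cknf$ induces the desired morphism. This breaks at two points. First, if $M_0$ is large there are no \emph{small} $M_2^i$ containing $UM_0$, so Fact~\ref{nfcl-facts}(1) cannot be invoked (it needs $M_0\lea M_2^i$). Second, even granting that, you need the family $\{(M_2^i,M_3)\}$ to be a $\lambda$-directed diagram \emph{in $\cknf$}, i.e.\ $\nfs{M_2^i}{M_3}{M_2^j}{M_3}$; Lemma~\ref{fd-rmk} does not give this (the identity sits on the wrong leg), and nothing else in your setup does either.

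The paper avoids both obstructions by going through (2) first: once $\Knf$ is an almost-$\lambda$-AEC one resolves $(M_0,M_1)$---not $M_2$---as a directed $\Knf$-colimit of small $(M_0^i,M_1^i)$, uses the hypothesis to get $\nfcl{M_0}{M_1^i}{M_2}{M_3}$, combines with $\nfs{M_0^i}{M_1^i}{M_0}{M_1}$ via transitivity of $\nfm$ (Theorem~\ref{nfcl-uq}) to obtain $(M_0^i,M_1^i)\leanf(M_2,M_3)$, and concludes by smoothness of $\Knf$. The point is that the resolution and the smoothness step both live in $\Knf$, whose structure (2) supplies; your resolution of $M_2$ lives only in $\K$, where the required compatibility with $\nf$ is unavailable. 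A separate, smaller issue: your recovery of left accessibility in $(2)\Rightarrow(1)$ via ``the symmetry phenomenon'' implicitly invokes Corollary~\ref{sym-cor}, which needs chain bounds---an assumption not present here.
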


\begin{proof} \
  \begin{itemize}
  \item \underline{(\ref{accessible-charact-1}) implies (\ref{accessible-charact-15})}: Assume that $\nf$ is accessible. Pick $\lambda_0$ such that $\Knf$ is $\lambda_0$-accessible. By Remark \ref{acc2}, $\lambda_0$-directed colimits in $\K^2$ and $\Knf$ coincide, so in particular $\Knf$ has \emph{concrete} $\lambda_0$-directed colimits. By the proof of Theorem \ref{mu-aec-criteria} (here we are using that Lemma \ref{mu-aec-technical-2} does not require $\leanf$ to extend substructure), we obtain the desired result.
    \item \underline{(\ref{accessible-charact-15}) implies (\ref{accessible-charact-2})}: Fix a regular cardinal $\lambda$ such that $\Knf$ is a $\lambda$-AEC, except that $\leanf$ may not extend substructure. Fix $M_0, M_1, M_2, M_3$ such that $M_0 \lea M_\ell \lea M_3$, $\ell = 1,2$ and for any $A \subseteq U M_1$ with $|A| \le \LS (\Knf)$, $\nfcl{M_0}{A}{M_2}{M_3}$ holds. We show that $\nfs{M_0}{M_1}{M_2}{M_3}$, which will establish the $(<\LS (\Knf)^+)$-witness property. Since $\Knf$ is (almost) a $\lambda$-AEC, we can find a $\LS (\Knf)^+$-directed system $\seq{(M_0^i, M_1^i) : i \in I}$ such that $(M_0, M_1) = \bigcup_{i \in I} (M_0^i, M_1^i)$ and $|U M_1^i| \le \LS (\Knf)$ for all $i \in I$. This implies in particular that for any $i \in I$, $\nfs{M_0^i}{M_1^i}{M_0}{M_1}$. We also know that $\nfcl{M_0}{M_1^i}{M_2}{M_3}$. By transitivity for $\nfm$ (Theorem \ref{nfcl-uq}), $\nfs{M_0^i}{M_1^i}{M_2}{M_3}$. In other words, $(M_0^i, M_1^i) \leanf (M_2, M_3)$ for all $i \in I$. By smoothness, this implies that $(M_0, M_1) \leanf (M_2, M_3)$, i.e.\ $\nfs{M_0}{M_1}{M_2}{M_3}$, as desired.

Similarly, local character holds: let $M_0 \in \K$ and let $p \in \gS^{\alpha} (M_0)$. Without loss of generality $\alpha \le \LS (\K)$ (if not use the witness property). Say $p = \gtp (\ba / M_0; M_1)$. As before, find a $\LS (\Knf)^+$-directed system $\seq{(M_0^i, M_1^i) : i \in I}$ such that $(M_0, M_1) = \bigcup_{i \in I} (M_0^i, M_1^i)$ and $|UM_1^i| \le \LS (\Knf)$ for all $i \in I$. Since $I$ is $\LS (\Knf)^+$-directed, there is $i \in I$ such that $\ba \in \fct{\alpha}{M_1^i}$. Thus in particular $p$ does not fork over $M_0^i$. Since $|U M_0^i| \le \LS (\Knf)$, we are done.
  \item \underline{(\ref{accessible-charact-2}) implies (\ref{accessible-charact-1})}: Fix a regular cardinal $\theta$ such that $\nf$ has the $(<\theta)$-witness property and let $\lambda$ be as given by Lemma \ref{wp-lc}. We may assume without loss of generality that $\lambda = \lambda^{\theta + \mu + \LS (\K)}$ and $\lambda$ is regular. By Remark \ref{coherence-rmk}, it is enough to prove that $\Knf$ satisfies the Löwenheim-Skolem-Tarski and chain axioms of $\lambda$-AECs.

    \begin{itemize}
    \item \underline{LST axiom}: Let $(M, N) \in \Knf$ and let $A \subseteq U N$. Without loss of generality $|A|$ is infinite. We build $\seq{M_i, N_i : i < \lambda}$ increasing in $\K$ such that for all $i < \lambda$:
      \begin{enumerate}
      \item $A \subseteq U N_0$.
      \item $M_i \lea M$, $M_i \lea N_i \lea N$.
      \item $|U M_i| \le |U N_i| \le |A|^{<\lambda}$.
      \item $\nfcl{M_{i}}{\bigcup_{j < i} U N_j}{M}{N}$.
      \end{enumerate}

      This is enough: Let $M_\lambda := \bigcup_{i < \lambda} M_i$, $N_{\lambda} := \bigcup_{i < \lambda} N_i$. We claim that $\nfcl{M_\lambda}{N_\lambda}{M}{N}$. Indeed, recall that $\cf{\lambda} \ge \theta$ so for any $B \subseteq U N_\lambda$ with $|B| < \theta$, there exists $i < \lambda$ such that $B \subseteq U N_i$. In particular by monotonicity $\nfcl{M_{i + 1}}{B}{M}{N}$. By base monotonicity, $\nfcl{M_{\lambda}}{B}{M}{N}$. Since $B$ was arbitrary, by the $(<\theta)$-witness property we indeed have that $\nfcl{M_{\lambda}}{N_{\lambda}}{M}{N}$. However we also have that $M_{\lambda} \lea N_{\lambda}$, so $\nfs{M_{\lambda}}{N_{\lambda}}{M}{N}$, and hence $(M_\lambda, N_\lambda) \leanf (M, N)$, as needed.

      This is possible: given $\seq{M_j : j < i}$, $\seq{N_j : j < i}$, take any $M_i \lea M$ such that $|U M_i| \le |A|^{<\lambda}$, $M_j \lea M_i$ for all $j < i$, $A \cap U M \subseteq U M_i$, and $\nfcl{M_i}{\bigcup_{j < i} N_j}{M}{N}$ (use local character, Lemma \ref{wp-lc}, and base monotonicity). Now given $\seq{M_j : j \le i}$ and $\seq{N_j : j < i}$, pick any $N_i \lea N$ such that $N_j \lea N_i$ for all $j < i$, $|U N_i| \le |A|^{<\lambda}$, $M_{i} \lea N_i$, and $A \subseteq U N_i$.

    \item \underline{Chain axioms}: Fix $\seq{(M_i, N_i) : i \in I}$ a $\lambda$-directed system in $\Knf$ and let $(M, N) := \bigcup_{i \in I} (M_i, N_i)$. Clearly, $(M, N) \in \Knf$. We first want to see that $\nfs{M_i}{N_i}{M}{N}$. We use the witness property. Fix $A \subseteq U M$ of size less than $\theta$. Since $\theta \le \lambda$, there exists $j \in I$ such that $i \le j$ and $A \subseteq U M_j$. Since $\nfs{M_i}{N_i}{M_j}{N_j}$ by assumption, we must have by monotonicity that $\nfcl{M_i}{N_i}{A}{N}$. Since $A$ was arbitrary, this implies that $\nfs{M_i}{N_i}{M}{N}$. Similarly, smoothness also holds: fix $(M', N')$ such that $(M_i, N_i) \leanf (M', N')$ for all $i \in I$. We want to show that $\nfs{M}{N}{M'}{N'}$. Fix $A \subseteq U N$ of size less than $\theta$ and fix $i \in I$ such that $A \subseteq U N_i$. By assumption, $\nfs{M_i}{N_i}{M'}{N'}$, so $\nfcl{M_i}{A}{M'}{N'}$, and so by base monotonicity $\nfcl{M}{A}{M'}{N'}$. Since $A$ was arbitrary, the witness property implies that $\nfs{M}{N}{M'}{N'}$, as desired.
    \end{itemize}
  \end{itemize}    
\end{proof}
\begin{remark}
  The proof goes through if we assume only that $\nf$ is invariant and monotonic, and that $\nfm$ is transitive.
\end{remark}

We deduce that having a stable independence notion implies stability and tameness:

\begin{cor}\label{stab-cor}
  Let $\K$ be a $\mu$-AEC with a stable independence relation. Then:

  \begin{enumerate}
  \item (Stability for Galois types) For any $\alpha$, there exists a proper class $S_\alpha$ of cardinals such that for any $\lambda \in S_\alpha$ and $M \in \K_{\lambda}$, $|\gS^{<\alpha} (M)| = \lambda$.
  \item (Tameness) For any $\alpha$, there exists a cardinal $\lambda$ such that for any $M \in \K$, $p, q \in \gS^{<\alpha} (M)$, $p = q$ if and only if $p \rest A = q \rest A$ for all $A \subseteq U M$ with $|A| < \lambda$.
  \end{enumerate}
\end{cor}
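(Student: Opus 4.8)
The plan is to derive both statements from the characterization of accessibility in Theorem~\ref{accessible-charact}. Since $\nf$ is stable, it is in particular accessible, transitive, and has existence and uniqueness (Definition~\ref{stable-def}), so Theorem~\ref{accessible-charact} applies and hands us, for free, both the witness and the local character properties. Fixing $\theta$ with the $(<\theta)$-witness property and the cardinal $\lambda_0$ from Lemma~\ref{wp-lc}, I would first record the single fact driving everything: for each $\alpha$, every $p \in \gS^{<\alpha}(M)$ fails to fork over some $M_0 \lea M$ of size at most $\chi_\alpha := (\lambda_0 + \alpha^{<(\theta+\mu)})^{<\mu} + \LS(\K)$, combining the uniform local character of Lemma~\ref{wp-lc} with the L\"owenheim--Skolem--Tarski axiom.

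For tameness I would set $\lambda := \chi_\alpha^+$. Given $p, q \in \gS^{<\alpha}(M)$ agreeing on every subset of $U M$ of size $<\lambda$, use the local character fact to find $M_p, M_q \lea M$ of size $\le \chi_\alpha$ over which $p$, resp.\ $q$, does not fork, then use the LST axiom to engulf both in a single $M_0 \lea M$ with $|U M_0| \le \chi_\alpha < \lambda$. Base monotonicity for $\nfm$ (Fact~\ref{nfcl-facts}(5)) then gives that \emph{both} $p$ and $q$ fail to fork over $M_0$; the agreement hypothesis gives $p \rest M_0 = q \rest M_0$; and uniqueness (Theorem~\ref{nfcl-uq}(1)) forces $p = q$. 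One first amalgamates the two ambient extensions realizing $p$ and $q$, which is legitimate since existence yields amalgamation.

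For stability the key structural observation is that, by extension (Fact~\ref{nfcl-facts}(6)) and uniqueness (Theorem~\ref{nfcl-uq}(1)), a nonforking type over $M$ is the \emph{unique} nonforking extension of its restriction to its small base; hence each $p \in \gS^{<\alpha}(M)$ is coded by a pair $(M_0, p \rest M_0)$ with $|U M_0| \le \chi_\alpha$. I would then bound $|\gS^{<\alpha}(M)|$ by the number of such small submodels of $M$, namely at most $|U M|^{\chi_\alpha}$, times a crude uniform bound $\gamma_\alpha$ on the number of Galois types over a model of size $\le \chi_\alpha$. Taking $S_\alpha$ to be the proper class of cardinals $\lambda \ge \gamma_\alpha$ with $\lambda^{\chi_\alpha} = \lambda$ makes this product equal $\lambda$ for $M \in \K_\lambda$, while the matching lower bound $|\gS^{<\alpha}(M)| \ge \lambda$ comes from the $\lambda$ pairwise distinct realized types $\gtp(a/M; M)$, $a \in U M$ (for $\alpha \ge 2$).

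I expect the main obstacle to be the bookkeeping step of reconciling Galois types over a \emph{set} with Galois types over a \emph{model}: the tameness hypothesis delivers agreement of $p$ and $q$ on the set $U M_0$, whereas Theorem~\ref{nfcl-uq}(1) is phrased for types over the model $M_0$, so I must confirm these restrictions coincide before invoking uniqueness. The remaining delicate point is cardinal-arithmetic: verifying that $S_\alpha$ is genuinely a proper class and that $\gamma_\alpha$ depends only on $\alpha$ (and the fixed data $\K$, $\nf$), which is routine once the crude type bound is in place.
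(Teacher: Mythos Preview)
Your proposal is correct and follows essentially the same approach as the paper, which simply instructs the reader to use Theorem~\ref{accessible-charact} and imitate \cite[5.17]{bgkv-apal} via local character and uniqueness. You have fleshed out precisely that argument: extract uniform local character from Lemma~\ref{wp-lc}, then for tameness pass to a common small base and invoke uniqueness (Theorem~\ref{nfcl-uq}), and for stability code each type by its small base together with the restriction and count.
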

\begin{proof}
  Use Theorem \ref{accessible-charact} and imitate the argument in \cite[5.17]{bgkv-apal}, using local character and uniqueness.
\end{proof}

We will see in the next section (Corollary \ref{indep-noop}) that if, in addition, $\K$ has chain bounds, it fails to have a certain order property.

\section{Canonicity, symmetry, and the order property}\label{canon-sec}

In this section, we prove (assuming chain bounds) that stable independence is canonical: there is at most one stable independence relation in any class. In fact, the symmetry property is not necessary, and hence can be deduced from the others. We show further that symmetry implies failure of an order property. Combined with Corollary \ref{stab-cor}, this shows that the class has several features of stable first-order theories.

Note that the proof of symmetry and the order property here are different from those in \cite{bgkv-apal}: since we are working in a more general context than AECs, we do not have Ehrenfeucht-Mostowski models (\cite[4.12]{mu-aec-jpaa}) and hence cannot directly deduce that the order property implies instability as in \cite[5.13]{bgkv-apal}. In fact, this fails in general, see Example \ref{stability-op-example}.

\begin{thm}[The canonicity theorem]\label{canon-thm}
  Let $\K$ be a $\mu$-AEC which has chain bounds (see Definition \ref{directed-def}). Let $\nf$ be an invariant, transitive independence notion on $\nf$ with existence, uniqueness, and right local character.

  Then any other invariant transitive relation on $\K$ satisfying existence, and uniqueness must be $\nf$.
\end{thm}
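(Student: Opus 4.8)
The plan is to pass from distinguished squares to Galois types, isolate a clean ``pinning'' lemma that reduces the equality of the two relations to a single containment, and then spend the real effort proving that containment, where right local character of $\nf$ is the decisive ingredient. Write $\nf'$ for the second relation. By Fact \ref{nfcl-facts}(1), the square $\nfs{M_0}{M_1}{M_2}{M_3}$ is $\nf$-independent if and only if $\nfcl{M_0}{M_1}{M_2}{M_3}$, i.e.\ $\gtp(\bar b/M_2;M_3)$ does not fork over $M_0$ for $\bar b$ an enumeration of $M_1$, and similarly for $\nf'$; so it suffices to prove that the two ``does not fork'' relations on types agree. Since both $\nf$ and $\nf'$ have existence and uniqueness, Theorem \ref{nfcl-uq} furnishes, for each $p_0 \in \gS^{<\infty}(M_0)$ and each $M_0 \lea M_2$, a \emph{unique} $\nf$-nonforking extension and a unique $\nf'$-nonforking extension of $p_0$ to $M_2$; the target is precisely that these two extensions always coincide.

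Next I would record the pinning lemma: if one of the relations is contained in the other (every independent diagram of the first is an independent diagram of the second), then they are equal. Indeed, suppose $\nf' \subseteq \nf$, and let $S$ be a $\nf$-independent diagram over a span $\sigma$. Using existence of $\nf'$, produce a $\nf'$-independent amalgam $S'$ of $\sigma$; by the containment $S'$ is also $\nf$-independent, so by uniqueness of $\nf$ we get $S \sim S'$; since $\nf'$ is closed under $\sim$ and $S'$ is $\nf'$-independent, $S$ is $\nf'$-independent. This uses only existence of $\nf'$, uniqueness of $\nf$, and closure of $\nf'$ under $\sim$, and it yields $\nf \subseteq \nf'$, hence equality. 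Thus the entire theorem reduces to the single containment $\nf' \subseteq \nf$, i.e.\ to showing that whenever $q \in \gS^{<\infty}(M_2)$ does not $\nf'$-fork over $M_0$, it does not $\nf$-fork over $M_0$ either.

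To prove this containment I would induct on $|U M_2|$, using right local character of $\nf$ to drive the induction. Given $q$ not $\nf'$-forking over $M_0$, local character of $\nf$ supplies a small $M_\ast \lea M_2$ over which $q$ does not $\nf$-fork; by base monotonicity (Fact \ref{nfcl-facts}(5)) together with the LST axiom and the amalgamation coming from existence, I may enlarge $M_\ast$ so that $M_0 \lea M_\ast \lea M_2$ with $|U M_\ast| \le |U M_0| + \lambda$, while keeping $q$ non-$\nf$-forking over $M_\ast$. By transitivity of $\nf$ (Theorem \ref{nfcl-uq}(2)) it then suffices to show that $q \rest M_\ast$ does not $\nf$-fork over $M_0$; and $q \rest M_\ast$ does not $\nf'$-fork over $M_0$ by monotonicity (Fact \ref{nfcl-facts}(3)). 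This is the same statement with $M_\ast$ in place of $M_2$, and $|U M_\ast| < |U M_2|$ whenever $|U M_2| > |U M_0| + \lambda$, so the induction collapses the domain down to the kernel case in which $M_2$ is no larger than $M_0$ (up to $\lambda$).

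The kernel is where I expect the fight, and where right local character is genuinely used rather than merely invoked: here the domain-reduction no longer shrinks anything, and, crucially, the base $M_0$ cannot be reduced because we are \emph{not} given left local character for $\nf'$. Following the strategy of \cite{bgkv-apal}, I would argue that a disagreement between the $\nf$- and $\nf'$-nonforking extensions of a single type could be iterated into a long continuous tower of isomorphic copies of the configuration, built inside a locally model-homogeneous model produced by Theorem \ref{mh-thm}(1) (this is exactly where chain bounds is indispensable), manufacturing a type that $\nf$-forks over every small submodel and so contradicting local character; uniqueness then forces the two extensions to agree. The two delicate points I anticipate are (i) the limit stages of the tower, where, absent tameness, Galois types are not determined by their small restrictions, so continuity must be routed through uniqueness and base monotonicity rather than naive restriction, and (ii) arranging the iteration so that the $\nf$-base is genuinely pushed upward without bound, which is what converts a single disagreement into a violation of local character.
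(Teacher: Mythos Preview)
Your pinning lemma is correct and matches the easy half of the argument in \cite{bgkv-apal}: once $\nf'\subseteq\nf$ is known, existence for $\nf'$ plus uniqueness for $\nf$ forces equality. The reduction via right local character of $\nf$ to a ``kernel'' with $|UM_2|\le |UM_0|+\lambda$ is also formally correct, but it buys nothing: $\lambda$ here depends on the length of $q$, i.e.\ on $|UM_1|$, and neither $M_0$ nor $M_1$ has been reduced, so the kernel case is exactly as hard as the original. The paper does not make this reduction; it goes straight to the core argument.

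The gap is in the kernel case. ``Iterate a disagreement into a long tower and contradict local character'' is the right slogan, but the construction you sketch does not obviously run. Concretely: suppose $q$ over $N_1$ is $\nf'$-nonforking over $N_0$ but $\nf$-forks over $N_0$. To continue, you need $N_2\gea N_1$ and an extension $q_2$ of $q$ that is still $\nf'$-nonforking over $N_0$ \emph{and} $\nf$-forks over $N_1$. Existence of $\nf'$ gives you the first clause for free, but nothing in your setup forces the second; the $\nf'$-nonforking extension of $q$ to $N_2$ may very well be $\nf$-nonforking over $N_1$. Trying to manufacture the forking by choosing $(N_1,N_2)\cong (N_0,N_1)$ does not help, since the image of $q$ under that isomorphism need not extend $q$. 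This is precisely the obstacle that the $(E_+)$ property of \cite[4.4]{bgkv-apal} is designed to overcome, and the paper follows that route: prove $(E_+)$ for $\nf$, then invoke \cite[4.8]{bgkv-apal} (which goes through verbatim in $\mu$-AECs) to get canonicity.

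The proof of $(E_+)$ is where the independent sequences enter, and here the paper makes a genuine modification to \cite{bgkv-apal} that your sketch misses. The original local-character lemma for independent sequences, \cite[4.11]{bgkv-apal}, uses symmetry of $\nf$, which is \emph{not} among the hypotheses here (indeed, symmetry is deduced \emph{from} canonicity in Corollary \ref{sym-cor}). The paper replaces it by building a long $\nf^d$-independent sequence $\seq{M_i:i<\lambda}$ over $M$ (possible since existence for $\nf$ gives existence for $\nf^d$) and then showing that for any set $A$ there is $i<\lambda$ with $\nfcl{M}{A}{M_i}{\sea}$: right local character gives $\nfcl{N_i}{A}{N_\lambda}{\sea}$ for some witnessing $N_i$, the $\nf^d$-independence gives $\nfcl{M}{N_i}{M_i}{\sea}$, and \emph{left} transitivity combines these. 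This asymmetric manoeuvre is the technical heart of the generalization to $\mu$-AECs and is what your ``tower of isomorphic copies'' is standing in for.
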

\begin{proof}[Proof sketch]
  First note that the properties of $\nf$ carry over to $\nfm$, by Fact \ref{nfcl-facts} and Theorem \ref{nfcl-uq}. By Theorem \ref{monster-thm}, we can work inside a monster model $\sea$ which is as homogeneous as we need (if $\K$ does not have joint embedding, we can partition it into subclasses that each have joint embedding by looking at the equivalence classes of the relation ``embedding into a common model''). We want to imitate the argument of \cite[4.14]{bgkv-apal}. It falls into two parts. The first part shows that $\nf$ must have a property there called $(E_+)$ (a strong existence/extension property, which we recall below). The second part shows that having $(E_+)$ implies canonicity. This latter part is implemented in \cite[4.8]{bgkv-apal}, and does not use the fact that $\K$ is an AEC: the argument works in any $\mu$-AEC (and, in fact, in any coherent abstract class).

  It remains to check that $\nf$ has $(E_+)$: for any $M\lea N_0$ and set $A$, there is $N_0 \lea N$ such that for all $N'\cong_{N_0} N$, there is $N_0'\cong_M N_0$ with $\nfcl{M}{A}{N_0'}{\sea}$ and $N_0'\lea N'$ (see \cite[4.4]{bgkv-apal}). The proof of $(E_+)$ uses independent sequences. These are sequences $\seq{A_i : i < \delta}$ (inside a monster model) such that for some $\seq{N_i : i < \delta}$ (called the \emph{witnesses}), $\seq{N_i : i < \delta}$ is an increasing chain, $\bigcup_{j < i} A_j \subseteq U N_i$ and $\nfcl{N_0}{A_i}{N_i}{\sea}$ for all $i < \delta$. We say that $\seq{A_i : i < \delta}$ is \emph{independent over $M$} when there is a witnessing sequence $\seq{N_i : i < \delta}$ with $M = N_0$.

  The first step \cite[4.10]{bgkv-apal} is to show that we can always build independent sequences: given $A$, $M$, and $\delta$, there exists $\seq{A_i : i < \delta}$ that are independent over $M$ and such that the type of each $A_i$ over $M$ is the same as the type of $A$ over $M$ (for some enumerations of $A_i$ and $A$). The argument uses the extension property for $\nfm$ and goes through in the present setup too.

  The second and last ingredient in the proof of $(E_+)$ is to show \cite[4.11]{bgkv-apal} a certain local character property of independent sequences. The proof uses symmetry, so we will instead use the following variation:

  \underline{Claim}: Let $A$ be a set and let $\lambda \ge \mu$ be a regular cardinal such that any type of a sequence of length $|A|$ does not fork over a model of cardinality strictly less than $\lambda$ (this exists by right local character). Then whenever $\seq{M_i : i < \lambda}$ is a $\nf^d$-independent sequence (i.e.\ an independent sequence with respect to $\nf^d$) with $M \lea M_i$ for all $i < \lambda$, then there is $i < \lambda$ with $\nfcl{M}{A}{M_i}{\sea}$.

  \underline{Proof of Claim}: This is the same proof as in \cite[4.11]{bgkv-apal}, but we give it for the convenience of the reader. Let $\seq{N_i : i < \lambda}$ witness the independence and let $N_{\lambda} := \bigcup_{i < \lambda} N_i$. By right local character and base monotonicity, there exists $i < \lambda$ such that $\nfcl{N_i}{A}{N_{\lambda}}{\sea}$. By monotonicity, $\nfcl{N_i}{A}{M_i}{\sea}$. Since the $M_i$'s are $\nf^d$-independent, we also have that $\nfcl{M}{N_i}{M_i}{\sea}$. Using left transitivity, we get that $\nfcl{M}{A}{M_i}{\sea}$, as desired. $\dagger_{\text{Claim}}$

  Now that the claim is proven, the argument of \cite[4.13]{bgkv-apal} goes through to show that $\nf$ has $(E_+)$, completing the proof.
\end{proof}
\begin{remark}
  It is enough to assume that $\K$ has monster models instead of having chain bounds, see Theorem \ref{monster-thm}.
\end{remark}

We deduce the symmetry property:

\begin{cor}\label{sym-cor}
  If $\K$ is a $\mu$-AEC which has chain bounds and $\nf$ is an invariant, transitive independence notion with existence, uniqueness and right local character, then $\nf$ is symmetric. Thus if $\nf$ has, in addition, the right (or left) witness property, it is a stable independence relation.
\end{cor}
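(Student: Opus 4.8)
The plan is to derive symmetry as an essentially immediate consequence of the canonicity theorem (Theorem \ref{canon-thm}) applied to the dual relation $\nf^d$. The key observation is that $\nf^d$ satisfies exactly the hypotheses required of an ``other'' relation in the statement of canonicity. Indeed, $\nf^d$ is invariant (swapping the two legs of an amalgamation diagram carries an isomorphism of diagrams to an isomorphism of the dual diagrams, so invariance is preserved), it is transitive (since $\nf$ is both left and right transitive and duality exchanges these two conditions, $\nf^d$ is again both), it has existence by Remark \ref{existence-dual}, and it has uniqueness by the remark following the definition of the uniqueness property. Crucially, Theorem \ref{canon-thm} demands right local character only of the \emph{reference} relation $\nf$, not of the competing relation, so I need not verify any local character property for $\nf^d$.

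Having checked this, I would invoke Theorem \ref{canon-thm} with $\nf$ as the distinguished relation (which has right local character by hypothesis) and $\nf^d$ playing the role of ``any other invariant transitive relation satisfying existence and uniqueness.'' The theorem then forces $\nf^d = \nf$, which is precisely the assertion that $\nf$ is symmetric. This settles the first half of the corollary.

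For the second half, suppose in addition that $\nf$ has the right (equivalently, by the symmetry just established, also the left) witness property. I would first use symmetry to promote the one-sided hypotheses to two-sided ones: since $\nf^d = \nf$, right local character yields left local character, and the right witness property yields the left witness property, so $\nf$ has both the witness and local character properties in the unqualified senses of Definitions \ref{lc-def} and \ref{wp-def}. Now Theorem \ref{accessible-charact}---whose hypotheses, namely that $\nf$ be invariant and transitive with existence and uniqueness, are all in force---gives that $\nf$ is accessible. Assembling symmetry, transitivity, existence, uniqueness, and accessibility, I conclude by Definition \ref{stable-def} that $\nf$ is a stable independence relation.

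The argument is short precisely because all of the genuine work is carried out inside Theorem \ref{canon-thm}; the only real steps here are the recognition that $\nf^d$ is a legitimate competitor and the point that right local character of the reference relation alone suffices to run canonicity. The one subtlety to respect is therefore the logical order: symmetry must be proved \emph{first}, since it is exactly what allows the one-sided local character and witness hypotheses to be upgraded to the two-sided properties that Theorem \ref{accessible-charact} requires.
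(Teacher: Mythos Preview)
Your proposal is correct and follows exactly the paper's approach: apply Theorem \ref{canon-thm} with $\nf^d$ as the competing relation to conclude $\nf=\nf^d$, then invoke Theorem \ref{accessible-charact} for the second sentence. Your expanded verification that $\nf^d$ meets the hypotheses, and your observation that symmetry is needed first to upgrade the one-sided witness and local character properties to two-sided ones, make explicit what the paper leaves implicit in its two-line proof.
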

\begin{proof}
  $\nf^d$ is invariant, transitive, and has existence and uniqueness, so by Theorem \ref{canon-thm} $\nf = \nf^d$. The last sentence follows from Theorem \ref{accessible-charact}.
\end{proof}

\begin{cor}\label{canon-cor}
  Let $\ck$ be a category which has chain bounds and whose morphisms are monomorphisms. Then there is at most one stable independence notion on $\ck$.
\end{cor}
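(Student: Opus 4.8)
The plan is to reduce everything to the $\mu$-AEC setting and then invoke the canonicity theorem (Theorem \ref{canon-thm}), using that all the relevant notions are invariant under equivalence of category. Suppose $\ck$ carries two stable independence notions $\nf_1$ and $\nf_2$ (if it carries at most one, there is nothing to prove). By Definition \ref{stable-def} each $\nf_i$ is symmetric, transitive, accessible, and has existence and uniqueness; by Lemma \ref{monot-lem} each is therefore also right monotonic. Thus $\nf_1$ satisfies the hypotheses of Lemma \ref{acc}, whence $\ck$ is an accessible category. Since $\ck$ is accessible and all its morphisms are monomorphisms, Fact \ref{mu-aec-acc} supplies a regular cardinal $\mu$ and an equivalence of categories $F\colon \ck \to \K$ with $\K$ a $\mu$-AEC.

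Next I would observe that stable independence is genuinely a categorical notion, so it transports along $F$. An independence relation is merely a class of commutative squares closed under $\sim$, and all of the defining properties---existence, uniqueness, transitivity, symmetry, and accessibility of the associated category---are preserved by an equivalence: since $\cknf$ is built functorially from $\ck$ and its class of independent diagrams, $F$ induces an equivalence $\cknf \to \Knf$, and accessibility is equivalence-invariant. Consequently $F$ carries $\nf_1,\nf_2$ to stable independence notions $\bar{\nf}_1,\bar{\nf}_2$ on $\K$, and it suffices to prove $\bar{\nf}_1 = \bar{\nf}_2$. Likewise chain bounds is the assertion that every chain admits an upper bound, a categorical property, so $\K$ inherits chain bounds from $\ck$.

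Now I would apply the machinery of Section \ref{canon-sec} inside $\K$. Both $\bar{\nf}_1$ and $\bar{\nf}_2$ are invariant, transitive, and have existence and uniqueness, and each is accessible; hence by Theorem \ref{accessible-charact} (whose standing hypotheses of transitivity, existence, and uniqueness are met) each has the witness and local character properties, in particular right local character. Invoking Theorem \ref{canon-thm} with $\nf := \bar{\nf}_1$---which is invariant, transitive, and has existence, uniqueness, and right local character---against the invariant transitive relation $\bar{\nf}_2$ with existence and uniqueness forces $\bar{\nf}_2 = \bar{\nf}_1$. Transporting back along the equivalence $F$ yields $\nf_1 = \nf_2$, as desired.

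The substantive content is all borrowed from the prior sections, so the only point deserving care is the transfer along $F$: one must check that $F$ preserves independence of diagrams and the accessibility of the associated morphism category, but this is immediate from $\cknf \simeq \Knf$ and the equivalence-invariance of accessibility. As a remark, by Theorem \ref{monster-thm} it would suffice to assume that $\ck$ has monster models rather than chain bounds.
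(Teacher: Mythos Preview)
Your proof is correct and follows essentially the same route as the paper's: invoke Lemma \ref{acc} to obtain accessibility of $\ck$, pass to an equivalent $\mu$-AEC via Fact \ref{mu-aec-acc}, apply Theorem \ref{accessible-charact} to extract local character, and conclude with Theorem \ref{canon-thm}. You have simply spelled out in more detail the step the paper leaves implicit---that the notion of stable independence and the property of having chain bounds transfer along an equivalence of categories.
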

\begin{proof}
  By Lemma \ref{acc}, the equivalence between $\mu$-AECs and accessible categories with all morphisms monomorphisms, Theorem \ref{accessible-charact}, and Theorem \ref{canon-thm}.
\end{proof}

\begin{question}\label{canon-q}
  Can one prove an even more general canonicity result? What if the morphisms of the category are not all monomorphisms? What if the category does not have chain bounds?
\end{question}

The following remark shows that a weakening of chain bounds is enough for the proof of the canonicity theorem to go through.

\begin{remark}\label{canon-locally-finite}
  Let $\kappa$ be an infinite cardinal and consider the class $\ck$ of $\kappa$-local graphs studied in Example \ref{locally-finite-example}. Then $\ck$ has a stable independence notion $\nf$ but does \emph{not} have chain bounds. However the proof of Theorem \ref{canon-thm} still goes through: chain bounds are only used to build independent sequences, and, at least in this case, we can achieve the same effect by other methods. Let $A$ be a $\kappa$-local graph and let $M$ be a fixed $\kappa$-local graph. For simplicity, assume that $M$ is a full subgraph of $A$. We build $\seq{A_i :i < \delta}$ a sequence of $\kappa$-local graphs with $A_i \cong_M A$ for all $i < \delta$, as well as $\seq{N_i : i < \delta}$ an increasing continuous sequence of $\kappa$-local graphs such that $N_0 = M$, $N_{i + 1} = N_i \cup A_i$ and $\nfs{M}{A_i}{N_i}{N_{i + 1}}$ for $i < \delta$. Using the existence property, it is easy to implement the successor step. For the limit step, it is enough to check that for $i$ limit, $N_i = \bigcup_{j < i} N_j$ is again $\kappa$-local. So let $v$ be a vertex of $N_i$. Pick $j < i$ such that $v \in N_j$. By construction, there are no edges from $v$ to $A_{j'}$ for $j' \ge j$, and moreover $N_i = \bigcup_{j' < i} A_{j'}$, so any edge of $v$ in $N_i$ must be contained in $N_j$, which is $\kappa$-local, as desired.
\end{remark}

We will use the following definition of the order property, introduced by Shelah for AECs \cite[4.3]{sh394}. In the first-order case, it is equivalent to the usual definition.

\begin{defin}\label{op-def}
  A $\mu$-AEC $\K$ has the \emph{$\alpha$-order property of length $\theta$} if there exists $M \in \K$ and a sequence $\seq{\ba_i : i < \theta}$ such that:
  
  \begin{enumerate}
  \item $\ba_i  \in \fct{\alpha}{U M}$ for all $i < \theta$.
  \item For all $i_0 < j_0 < \theta$ and $i_1 < j_1 < \theta$, $\gtp (\ba_{i_0} \ba_{j_0} / \emptyset; M) \neq \gtp (\ba_{j_1} \ba_{i_1} / \emptyset; M)$.
  \end{enumerate}

  $\K$ has the \emph{order property} if there exists $\alpha$ such that for all $\theta$, $\K$ has the $\alpha$-order property of length $\theta$.
\end{defin}

\begin{thm}\label{op-thm}
  Let $\K$ be a $\mu$-AEC and $\nf$ be an invariant, transitive independence notion with existence, uniqueness, and right local character. If $\K$ has chain bounds, or more generally if $\nf$ has symmetry, then $\K$ does \emph{not} have the order property.
\end{thm}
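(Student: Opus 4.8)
The plan is to reduce to the symmetric case and then exhibit, inside a long order-property sequence, two members that can be \emph{swapped}, directly contradicting Definition~\ref{op-def}. First, if $\K$ has chain bounds then $\nf$ is symmetric by Corollary~\ref{sym-cor}, so under either hypothesis I may assume $\nf$ is symmetric. Since $\nf$ is transitive with existence and uniqueness, Lemmas~\ref{monot-lem} and~\ref{base-monot-lem} give that $\nf$ is monotonic and base-monotonic, so all the properties of $\nfm$ collected in Fact~\ref{nfcl-facts} and Theorem~\ref{nfcl-uq} (monotonicity, base monotonicity, extension, symmetry, uniqueness, transitivity) are available. The core tool I would isolate is a \emph{swapping lemma}: if $N_0 \lea N$, the sequences $\ba,\bb$ lie in $N$, $\gtp(\ba / N_0; N) = \gtp(\bb / N_0; N)$, and $\nfcl{N_0}{\bb}{\ba}{N}$, then $\gtp(\ba \bb / N_0; N) = \gtp(\bb \ba / N_0; N)$. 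Indeed, uniqueness of nonforking extensions (Theorem~\ref{nfcl-uq}) shows that $\gtp(\ba\bb/N_0)$ is determined by the common restriction $r := \gtp(\ba/N_0) = \gtp(\bb/N_0)$ together with the demand that the second coordinate be independent from the first over $N_0$; symmetry of $\nfm$ (Fact~\ref{nfcl-facts}(7)) then shows $(\bb,\ba)$ satisfies the same defining conditions, so both pairs realize the same type over $N_0$.

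Toward a contradiction, suppose $\K$ has the order property and fix $\alpha$ witnessing it. I would first extract a \emph{stability} estimate from local character and uniqueness alone. Fix $\chi \ge \LS(\K) + |\alpha|$ (closed under $(-)^{<\mu}$) such that, by right local character (Definition~\ref{lc-def}), every $p \in \gS^\alpha(M)$ does not fork over some $P \lea M$ with $|U P| \le \chi$. By uniqueness for types over models (Theorem~\ref{nfcl-uq}), such a $p$ is determined by the pair $(P, p \rest P)$; counting submodels and types over them gives $|\gS^\alpha(M)| \le |U M|^{\chi} \cdot 2^{\chi}$. Hence at $\lambda := 2^{\chi}$, which satisfies $\lambda = \lambda^{\chi} = \lambda^{<\mu} \ge \LS(\K) + |\alpha| + \chi$, we obtain $|\gS^\alpha(M)| \le \lambda$ whenever $|U M| \le \lambda$. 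This is exactly the instance of Corollary~\ref{stab-cor} that the argument requires, proved here without the witness property (hence without Theorem~\ref{accessible-charact}).

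Now apply the order property at length $\theta := \lambda^+$ to get $\seq{\ba_i : i < \lambda^+}$ inside some $M \in \K$. Using the Löwenheim--Skolem--Tarski and chain axioms I would build an increasing resolution $\seq{N_i : i < \lambda^+}$, continuous at limits of cofinality $\ge \mu$, with $N_i \lea M$, $\ba_j \subseteq U N_{j+1}$, and (crucially) $|U N_i| \le \lambda$ for all $i$ (this uses $\lambda^{<\mu} = \lambda$ and $|i| \le \lambda$). Fix a regular $\kappa$ with $\chi \le \kappa \le \lambda$ and put $S = \{j < \lambda^+ : \cf{j} = \kappa\}$, stationary in $\lambda^+$. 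For $j \in S$, local character gives a nonforking base of size $\le \chi < \cf{j}$, which by coherence is $\lea N_{g(j)}$ for some $g(j) < j$; base monotonicity (Fact~\ref{nfcl-facts}(5)) then yields that $\gtp(\ba_j / N_j)$ does not fork over $N_{g(j)}$. By Fodor there is a stationary $S' \subseteq S$ and a fixed $i^*$ with $g \equiv i^*$ on $S'$. Since $|U N_{i^*}| \le \lambda$, the stability estimate gives $|\gS^\alpha(N_{i^*})| \le \lambda < \lambda^+ = |S'|$, so some $i < j$ in $S'$ satisfy $\gtp(\ba_i / N_{i^*}) = \gtp(\ba_j / N_{i^*})$. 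As $\ba_i \subseteq U N_{i+1} \subseteq U N_j$, monotonicity of $\nfm$ converts nonforking of $\gtp(\ba_j / N_j)$ over $N_{i^*}$ into $\nfcl{N_{i^*}}{\ba_j}{\ba_i}{M}$. The swapping lemma then delivers $\gtp(\ba_i \ba_j / N_{i^*}) = \gtp(\ba_j \ba_i / N_{i^*})$, hence $\gtp(\ba_i \ba_j / \emptyset) = \gtp(\ba_j \ba_i / \emptyset)$ with $i < j$, contradicting Definition~\ref{op-def}(2).

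The main obstacle is the tension in the choice of base for the swap: the swapping lemma demands that the two selected members have the \emph{same} Galois type over a \emph{common, small} model over which one is independent from the other, whereas local character only supplies an index-dependent base, and pressing down with Fodor produces a single base $N_{i^*}$ that is a priori of unbounded size. Reconciling this forces the resolution to consist entirely of models of size $\le \lambda$ (so $\theta = \lambda^+$), while the pigeonhole on types over $N_{i^*}$ simultaneously needs $\theta$ to exceed $|\gS^\alpha(N_{i^*})|$. The delicate point is that this is resolved by the self-contained stability bound of the second paragraph, which must be obtained from local character and uniqueness alone, since the witness property (and hence accessibility) is not among the hypotheses of this theorem.
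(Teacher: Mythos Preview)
Your proposal is correct and follows essentially the same route as the paper: reduce to symmetry via Corollary~\ref{sym-cor}, take a long order-property sequence, build a small resolution, use local character plus Fodor to get a common nonforking base, pigeonhole on types over that base, and then swap two independent elements of the same type to contradict the order property. Your ``swapping lemma'' is exactly what the paper obtains by citing \cite[4.8]{tame-frames-revisited-jsl}, and your Fodor argument on a stationary set of points of large cofinality is the precise version of what the paper phrases informally as ``for each $i$ of sufficiently high cofinality.'' The one point where you are more careful than the paper is the stability estimate: the paper simply invokes ``the proof of Corollary~\ref{stab-cor},'' but, as you correctly observe, that corollary as stated passes through Theorem~\ref{accessible-charact} (hence through the witness property, which is not assumed here), so it is worth making explicit---as you do---that the type-counting bound follows directly from local character and uniqueness alone.
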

\begin{proof} 
  Recall from Corollary \ref{sym-cor} that chain bounds implies symmetry in the context of the theorem. Assume for a contradiction that $\K$ has the order property. Pick $\alpha$ and $\theta$ such that $\K$ has the $\alpha$-order property of length $\theta^+$, with $\theta = \theta^{<\mu} \ge \LS (\K)$ ``sufficiently big'' (the proof will tell us how big) such that $|\gS^\alpha (M)| \le \theta$ for all $M \in \K_{\le \theta}$ (exists by the proof of Corollary \ref{stab-cor}). Pick $\seq{\ba_i : i < \theta^+}$ and $M$ witnessing the $\alpha$-order property of length $\theta^+$. Let $\seq{M_i : i < \theta^+}$ be an increasing sequence of submodels of $M$ such that $\{\ba_j : j < i\} \subseteq U M_{i}$ for all $i < \theta^+$ and $|U M_i| \le \theta$.   For each $i < \theta^+$ of sufficiently high cofinality, there exists $j < i$ such that $\gtp (\ba_i / M_i)$ does not fork over $M_j$. By Fodor's lemma, we can therefore assume without loss of generality that $\gtp (\ba_i / M_i; M)$ does not fork over $M_0$ for all $i < \theta^+$. Since $|\gS^\alpha (M_0)| \le \theta$, we can further assume without loss of generality that $\gtp (\ba_i / M_0; M) = \gtp (\ba_{i'} / M_0; M)$ for all $i < i' < \theta^+$. By uniqueness, this implies that $\gtp (\ba_i / M_i; M) = \gtp (\ba_{i'} / M_i; M)$.

  We are now in the following setup: $\ba_0 \ba_1$ is a two-element independent sequence over $M_0$ (in this case, this means that $\nfcl{M_0}{\ba_0}{\ba_1}{M}$), and so by symmetry also $\nfcl{M_0}{\ba_1}{\ba_0}{M}$, so $\ba_1 \ba_0$ is a two-element independent sequence over $M_0$. Now the proof of \cite[4.8]{tame-frames-revisited-jsl} (specifically the successor step) tells us that if we have two two-element independent sequences $\ba \bb$ and $\ba' \bb'$ over a model $M_0$, so that the types of their individual elements are equal over $M_0$ (i.e.\ $\ba$ and $\ba'$ agree over $M_0$ and $\bb$ and $\bb'$ agree over $M_0$), then in fact $\ba \bb$ and $\ba' \bb'$ have the same type over $M_0$. Applying this here, we get that $\gtp (\ba_0 \ba_1 / M_0; M) = \gtp (\ba_1 \ba_0/ M_0; M)$. This contradicts that the sequence $\seq{\ba_i : i < \theta^+}$ witnessed the order property. 
\end{proof}

\begin{cor}\label{indep-noop}
  If $\K$ is a $\mu$-AEC with a stable independence relation, then $\K$ does not have the order property. 
\end{cor}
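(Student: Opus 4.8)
The plan is to derive this directly from Theorem \ref{op-thm}, whose hypotheses a stable independence relation satisfies almost by definition. First I would unpack Definition \ref{stable-def}: a stable independence relation $\nf$ is symmetric, transitive, accessible, and has both existence and uniqueness. The one hypothesis of Theorem \ref{op-thm} not literally on that list is invariance, but this is immediate from the machinery already developed: since $\nf$ is transitive (hence in particular right transitive) and has existence, the lemma following Remark \ref{full-subcat-rmk} gives that $\nf$ is invariant.

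The remaining hypothesis to verify is right local character. Here I would invoke the characterization of Theorem \ref{accessible-charact}: because $\nf$ is transitive and has existence and uniqueness, accessibility of $\nf$ is \emph{equivalent} to $\nf$ having the witness and local character properties. In particular, the accessibility built into the definition of a stable independence relation yields local character, and hence right local character. With invariance, transitivity, existence, uniqueness, and right local character now all in hand, Theorem \ref{op-thm} applies. Moreover, since $\nf$ is symmetric by Definition \ref{stable-def}, I would use the ``more generally if $\nf$ has symmetry'' clause of Theorem \ref{op-thm} rather than the chain bounds hypothesis, and conclude that $\K$ does not have the order property.

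There is no genuine obstacle: the entire content of the corollary is packaged into the earlier Theorems \ref{accessible-charact} and \ref{op-thm}, and the corollary is simply an assembly of their conclusions. The only step requiring any care is routing right local character through the accessibility characterization (Theorem \ref{accessible-charact}) rather than assuming it as a primitive hypothesis, and correctly selecting the symmetry branch of Theorem \ref{op-thm}, so that no chain-bounds assumption on $\K$ is needed.
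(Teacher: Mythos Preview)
Your proposal is correct and follows essentially the same route as the paper's proof, which simply cites Theorem \ref{op-thm} together with the equivalence of Theorem \ref{accessible-charact}. You have merely spelled out the details the paper leaves implicit (invariance via the lemma after Remark \ref{full-subcat-rmk}, and the use of the symmetry clause rather than chain bounds in Theorem \ref{op-thm}).
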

\begin{proof}
  This is a consequence of Theorem \ref{op-thm}, using the equivalence given by Theorem \ref{accessible-charact}.
\end{proof}

\begin{remark}\label{banachord-rmk} We may now give an alternate proof of the fact that $\Ban$, the category of Banach spaces and linear contractions, does not have effective unions (see Example~\ref{effective-union-examples}(4)). If $\Ban$ had effective unions, the corresponding $\mu$-AEC $\K$ of Banach spaces with isometries would have a stable independence notion by Theorem \ref{indep}. By Corollary \ref{indep-noop}, this means that $\K$ does not have the order property (see Definition \ref{op-def}). Take however the Banach space $c_0$ of complex-valued sequences $\seq{a_n : n < \omega}$ going to zero with the supremum norm. Let $e_n$ be the sequence that is one at position $n$ and 0 elsewhere. Let $f_n := \sum_{i \le n} e_i$. Then $\|e_m + f_n\| = 2$ if and only if $m \le n$ (see \cite{krivine-maurey}). Thus $c_0$ satisfies an instance of the order property of length $\omega$. By the compactness theorem for continuous first-order logic (see e.g.\ \cite{fourguys-metric}), this implies that $\K$ must have the order property.  Contradiction.\end{remark}

Shelah \cite[\S1]{sh1019} examines several definitions of stability for $\Ll_{\kappa, \kappa}$, $\kappa$ a strongly compact cardinal, and shows that, while there are natural implications, there are also several non-equivalences. The following is a simple example:

\begin{example}\label{stability-op-example}
  Let $\K$ be the $\aleph_1$-AEC of well-orderings, ordered by being a suborder. Then $\K$ has the order property but for every $M \in \K$, $|\gS^{<\omega} (M)| = |U M| + \aleph_0$. 
\end{example}

On the other hand it is known that failure of the order property implies stability in terms of counting Galois types. Roughly, this is because the proof of the corresponding first-order fact can be carried out inside a fixed model, see \cite[\S V.A]{shelahaecbook2}, \cite[\S1]{sh1019}, or the proof of \cite[4.11]{sv-infinitary-stability-afml}.

\section{Stable independence on saturated models}\label{noop-sec}

Putting together \cite{bg-apal, bgkv-apal, sv-infinitary-stability-afml}, one obtains the following converse to Corollary \ref{indep-noop}: Assuming large cardinals, any $\mu$-AEC which has chain bounds and does not have the order property admits a stable independence relation on a subclass of model-homogeneous models. This was essentially observed for AECs with amalgamation by Boney and Grossberg \cite[8.2]{bg-apal}, although categoricity is used there to prove local character. We work from the result of \cite[\S5]{sv-infinitary-stability-afml}, which focuses on the stable case and avoids any use of categoricity. Note that it is in general necessary to pass to a subclass of model-homogeneous models, see Example \ref{indep-examples}(\ref{field-example}).

\begin{thm}[The existence theorem]\label{indep-build}
  Assume Vop\v enka's principle. Let $\K$ be a $\mu$-AEC which has chain bounds. Let $\kappa > \LS (\K)$ be strongly compact. If $\K$ does not have the order property, then the $\kappa$-AEC $\Kmh$ of locally $\kappa$-model-homogeneous models of $\K$ (see Definition \ref{mh-def}) has a stable independence relation.
\end{thm}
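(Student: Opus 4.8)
The plan is to pass to a well-behaved subclass of locally $\kappa$-model-homogeneous models, equip it with the coheir independence relation, verify the hypotheses of Corollary \ref{sym-cor}, and read off stability from there. First I would note, using Theorem \ref{mh-thm}, that $\Kmh$ is a $\kappa$-AEC with chain bounds (part (3)); since $\kappa$ is strongly compact, part (4) of the same theorem shows that every model of $\Kmh$ is a global amalgamation base, so $\Kmh$ has amalgamation. Partitioning $\Kmh$ into its joint-embedding classes (as in Corollary \ref{kast-cor}) and defining the relation separately on each, I may assume joint embedding and hence the existence of a monster model $\sea$ into which all the relevant models $\K$-embed; any independence diagram lives within a single such class (its four models share the base $M_0$), so the relations glue to a single relation on $\Kmh$. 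Finally, strong compactness of $\kappa$ gives, via \cite[5.5]{mu-aec-jpaa}, that $\K$---and therefore $\Kmh$---is fully $(<\kappa)$-tame and short.

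Next I would take $\nf$ to be the coheir (i.e.\ $(<\kappa)$-satisfiability) relation computed in $\sea$: $\nfs{M_0}{M_1}{M_2}{\sea}$ holds precisely when for every $\bb \in \fct{<\kappa}{M_1}$ and every $A \subseteq U M_2$ with $|A| < \kappa$, the Galois type $\gtp(\bb / A; \sea)$ is realized in $M_0$ (amalgams over a general ambient model being handled by closure under $\sim$). Invariance, monotonicity, and transitivity of coheir are standard and transfer from the analysis of \cite{bg-apal} and \cite[\S5]{sv-infinitary-stability-afml}. The existence (extension) property is exactly where strong compactness does its work: one extends a coheir along any $\K$-embedding by averaging along a $\kappa$-complete ultrafilter. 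Uniqueness is the first point at which the failure of the order property is invoked; combined with full tameness and shortness it forces two nonforking extensions of the same type to agree, again following \cite{bg-apal, sv-infinitary-stability-afml} but via the no-order-property route rather than the categoricity-based one.

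With invariance, transitivity, existence, and uniqueness secured, the witness property becomes automatic: Fact \ref{wp-fact}, applied with $\theta = \kappa$, turns full $(<\kappa)$-tameness and shortness into the $(<\kappa)$-witness property (in particular the right one). The remaining---and genuinely hard---ingredient is right local character, that is, that every Galois type does not fork over a small submodel. This is precisely the content of \cite[\S5]{sv-infinitary-stability-afml}, whose design is to extract local character of coheir from the failure of the order property \emph{without} appealing to categoricity (in contrast to the Boney--Grossberg argument \cite[8.2]{bg-apal}, which uses categoricity for exactly this step). Once right local character is in hand, Corollary \ref{sym-cor} applies: since $\Kmh$ has chain bounds and $\nf$ is invariant, transitive, and has existence, uniqueness, and right local character, $\nf$ is symmetric; and since it also has the right witness property, $\nf$ is a stable independence relation on $\Kmh$, as required.

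The main obstacle is therefore the local character property, established categoricity-free from the failure of the order property together with the tameness and shortness afforded by strong compactness; this is the step on which the entire categoricity-free treatment rests, and it is imported wholesale from \cite[\S5]{sv-infinitary-stability-afml}.
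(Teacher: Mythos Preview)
Your proposal is correct and follows essentially the same approach as the paper: define $\nf$ as $\kappa$-coheir on $\Kmh$, extract its properties from \cite{bg-apal} and \cite[\S5]{sv-infinitary-stability-afml}, and conclude stability. The only cosmetic differences are that the paper obtains symmetry directly from the cited references (rather than via Corollary \ref{sym-cor}), reads the witness property straight from the definition of coheir (rather than via Fact \ref{wp-fact}), and explicitly records the preliminary reduction that failure of the order property yields failure of the $\alpha$-order property of length $\kappa$ for each $\alpha < \kappa$ (via ultraproducts), which is the quantitative form actually consumed by the cited arguments.
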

\begin{proof}[Proof sketch]
  By Theorem \ref{mh-thm}, $\Kmh$ has amalgamation and is indeed a $\kappa$-AEC. First observe that for any $\alpha < \kappa$, $\K$ does not have the $\alpha$-order property of length $\kappa$. If it did, then we would be able to take repeated ultraproducts of the universe to make the sequence longer and get that $\K$ has the $\alpha$-order property of any length. Moreover, the strongly compact also gives that $\K$ is fully $(<\kappa)$-tame and short (see \cite[5.5]{mu-aec-jpaa}).

  Now define $\nfs{M_0}{M_1}{M_2}{M_3}$ to hold if and only if $M_0 \lea M_\ell \lea M_3$, $\ell = 1,2$, all are in $\Kmh$, and for any $\ba \in \fct{<\kappa}{M_1}$, $\gtp (\ba / M_2; M_3)$ is a $\kappa$-coheir over $M_0$. That is, for any $A \subseteq U M_2$ of cardinality less than $\kappa$, $\gtp (\ba / A; M_3)$ is realized inside $M_0$.

  By the proofs of \cite{bg-apal} or \cite[\S5]{sv-infinitary-stability-afml}, we get all the required properties. More precisely, it is straightforward to check that $\nf$ is invariant and monotonic. Transitivity, symmetry, and uniqueness are proven as in \cite[\S4]{bg-apal} or \cite[5.15]{sv-infinitary-stability-afml}. The $(<\kappa)$-witness property then follows from the definition. Local character is \cite[5.15(2c)]{sv-infinitary-stability-afml} and existence is \cite[8.2]{bg-apal}. Using Theorem \ref{accessible-charact}, we get that $\nf$ is stable, as desired.
\end{proof}
\begin{remark}
  The assumption of Vop\v enka's principle can be removed if $\K$ has amalgamation.
\end{remark}

We summarize:

\begin{cor}\label{indep-build-cor}
  Assume Vop\v enka's principle. Let $\K$ be a $\mu$-AEC which has chain bounds. The following are equivalent:

  \begin{enumerate}
  \item $\K$ does not have the order property.
  \item For some sub-$\lambda$-AEC $\K^\ast \subseteq \K$ which is cofinal in $\K$ and has the same ordering as $\K$, $\K^\ast$ has a stable independence notion.
  \end{enumerate}
\end{cor}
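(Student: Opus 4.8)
The plan is to prove the two implications separately, reducing each to a result already established in the excerpt. For $(1)\Rightarrow(2)$ I would invoke the existence theorem (Theorem \ref{indep-build}) essentially verbatim; for $(2)\Rightarrow(1)$ I would combine the fact that a stable independence notion precludes the order property (Corollary \ref{indep-noop}) with a short lemma showing that the order property transfers upward along cofinal subclasses. Since all the hard work of constructing a stable independence relation out of the failure of the order property is packaged in Theorem \ref{indep-build}, the corollary itself is mostly bookkeeping plus this one transfer argument.

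For $(1)\Rightarrow(2)$: using Vopěnka's principle, fix a strongly compact cardinal $\kappa>\LS(\K)$. As $\K$ has chain bounds and does not have the order property, Theorem \ref{indep-build} yields that $\Kmh$, the $\kappa$-AEC of locally $\kappa$-model-homogeneous models, carries a stable independence relation. I would then set $\K^\ast := \Kmh$ and verify the remaining requirements of clause (2). By Theorem \ref{mh-thm}(3), $\K^\ast$ is a $\kappa$-AEC (so we take $\lambda=\kappa$) with chain bounds; it is ordered by the restriction of $\lea$, hence has the same ordering as $\K$; and it is cofinal in $\K$ because, by Theorem \ref{mh-thm}(1), every $M\in\K$ embeds into some locally $\kappa$-model-homogeneous $N\gea M$, i.e.\ into a member of $\K^\ast$.

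For $(2)\Rightarrow(1)$: suppose $\K^\ast$ is a cofinal sub-$\lambda$-AEC with the same ordering as $\K$ carrying a stable independence notion. By Corollary \ref{indep-noop}, $\K^\ast$ does not have the order property. It then suffices to prove the contrapositive of what we want: if $\K$ has the order property, so does $\K^\ast$. Given a witness $M\in\K$ and $\seq{\ba_i : i<\theta}$ to the $\alpha$-order property of length $\theta$ in $\K$, I would use cofinality to pick $N\in\K^\ast$ with $M\lea N$; the same sequence then lies in $\fct{\alpha}{N}$, and I claim it witnesses the $\alpha$-order property of length $\theta$ in $\K^\ast$. Indeed, if two of the relevant Galois types were equal when computed in $\K^\ast$, the witnessing $\K^\ast$-amalgams would also be $\K$-amalgams (as $\K^\ast\subseteq\K$ with the same ordering), so the equality would persist in $\K$; restricting the legs of these amalgams along $M\lea N$, together with the invariance of a type over $\emptyset$ under enlarging the ambient model, would force the corresponding types over $\emptyset$ computed in $M$ to coincide, contradicting the choice of $M$. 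Hence $\K^\ast$ has the order property, and the contrapositive gives that failure of the order property in $\K^\ast$ yields its failure in $\K$, which is (1).

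The only step that is not pure bookkeeping is this order-property transfer, and the one subtlety it hides is that $\K^\ast$-Galois types and $\K$-Galois types need not a priori agree. The resolution is one-directional and in our favor: every $\K^\ast$-amalgam is a $\K$-amalgam, so $\K^\ast$-equality of types implies $\K$-equality, which is exactly the direction needed to manufacture the contradiction. I therefore do not expect a genuine obstacle here, since the substantive construction — producing stable independence from the absence of the order property — has already been carried out in Theorem \ref{indep-build}, and the present corollary simply records the resulting equivalence.
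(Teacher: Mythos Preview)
Your proposal is correct and follows essentially the same route as the paper's own proof, which simply says to combine Theorem \ref{indep-build} with Theorem \ref{op-thm} (via Corollary \ref{indep-noop}) together with the parenthetical remark that the order property passes from $\K$ to any cofinal subclass. You have merely spelled out that transfer argument in more detail, correctly identifying that the only nontrivial point is the one-directional implication ``$\K^\ast$-type-equality $\Rightarrow$ $\K$-type-equality,'' which is exactly what is needed.
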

\begin{proof}
  Combine Theorems \ref{op-thm} (noting that if $\K$ has the order property, then any of its cofinal subclasses must have it) and \ref{indep-build}. 
\end{proof}

Note that in general, one can always restrict a stable independence relation to a subclass of model-homogeneous models. In fact:

\begin{lem}
  Let $\K$ be a $\mu$-AEC with a stable independence relation $\nf$. Let $\K^\ast$ be a subclass of $\K$ (ordered by the appropriate restriction of $\lea$) satisfying the following conditions:

  \begin{enumerate}
  \item $\K^\ast$ is a $\lambda$-AEC, for some $\lambda$.
  \item $\K^\ast$ is cofinal in $\K$ (that is any $M \in \K$ extends to an $N \in \K^\ast$).
  \end{enumerate}

  Then the restriction $\nf^\ast$ of $\nf$ to $\K^\ast$ is a stable independence relation on $\K^\ast$.
\end{lem}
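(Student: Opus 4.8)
The plan is to verify each clause of Definition \ref{stable-def} for the restriction $\nf^\ast$, reducing everything to the corresponding property of $\nf$ together with the cofinality of $\K^\ast$. First, $\nf^\ast$ is a genuine invariant independence relation on $\K^\ast$: closure under $\sim$ in $\K^\ast$ follows from closure under $\sim$ in $\K$, since a $\sim$-witness between $\K^\ast$-diagrams built in $\K^\ast$ is also one in $\K$; invariance and symmetry are immediate, as isomorphisms of $\K^\ast$-diagrams are isomorphisms in $\K$ and $\nf=\nf^d$ restricts to $\nf^\ast=(\nf^\ast)^d$. Transitivity restricts directly: the models occurring in the hypotheses and conclusion of right (and left) transitivity all belong to the given $\K^\ast$-data, so the conclusion for $\nf$ is literally the conclusion for $\nf^\ast$.

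The two properties requiring cofinality are existence and uniqueness. For existence, given a span in $\K^\ast$ I would apply existence of $\nf$ to get an independent amalgam with ambient model $N\in\K$, then embed $N\lea N'$ with $N'\in\K^\ast$ by cofinality; the amalgam into $N'$ is $\sim^\ast$-equivalent to the original, hence independent by closure under $\sim$, and lies entirely in $\K^\ast$. Since existence yields amalgamation in $\K^\ast$, the relation $\sim^\ast$ is already transitive there (the remark following Definition \ref{sim-def}). For uniqueness, two $\K^\ast$-amalgams of a common span are $\nf$-independent, hence $\sim^\ast$-equivalent in $\K$ by uniqueness of $\nf$ (using that $\nf$-existence gives amalgamation in $\K$); enlarging the single witness model into $\K^\ast$ produces the equivalence in $\K^\ast$. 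At this point $\nf^\ast$ is symmetric and transitive and has existence and uniqueness, so by Lemma \ref{monot-lem} it is monotonic, and Fact \ref{nfcl-facts} and Theorem \ref{nfcl-uq} apply to it verbatim (treating $\K^\ast$ as a $\lambda$-AEC).

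It remains to prove accessibility, which by Theorem \ref{accessible-charact} (applied to the $\lambda$-AEC $\K^\ast$) is equivalent to $\nf^\ast$ having the witness and local character properties. The technical heart is the observation that Galois types and nonforking in $\K^\ast$ agree with those in $\K$ over models of $\K^\ast$. Concretely, for $M\in\K^\ast$ cofinality yields a natural bijection between the $\K^\ast$-computed and $\K$-computed type spaces $\gS^{<\infty}(M)$: every $\K$-representative of a type extends into $\K^\ast$, and any $\K$-amalgam witnessing equality of types extends into $\K^\ast$. I would then show, for $M_0\lea M\lea N$ in $\K^\ast$, that a type over $M$ does not $\nf^\ast$-fork over $M_0$ if and only if it does not $\nf$-fork over $M_0$: the forward direction is trivial, and the converse follows by taking the $\nf^\ast$-nonforking extension over $M$ of the restriction to $M_0$ (extension, Fact \ref{nfcl-facts}(6)), transferring it to $\K$, and invoking uniqueness of $\nf$ (Theorem \ref{nfcl-uq}) to identify it with the given type. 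This correspondence is compatible with restriction, since the bijection is induced by inclusion of representatives.

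Granting this correspondence, both remaining properties transfer. For the witness property, the defining biconditional of Definition \ref{wp-def} over $\K^\ast$-models becomes, via the correspondence, the same biconditional for $\nf$, which holds. For local character, given $p\in\gS^{\alpha}(M)$ with $M\in\K^\ast$, local character of $\nf$ furnishes a small $M_0'\lea M$ in $\K$ over which $p$ does not $\nf$-fork; \emph{here the main obstacle arises}, since $M_0'$ need not lie in $\K^\ast$. I would resolve this using the L\"owenheim--Skolem--Tarski axiom of $\K^\ast$ to find $M_0\in\K^\ast$ with $M_0'\subseteq M_0\lea M$ and $|U M_0|$ bounded in terms of $|U M_0'|$ and $\LS(\K^\ast)$; coherence of $\K$ upgrades $M_0'\subseteq M_0$ to $M_0'\lea M_0$, base monotonicity of $\nf$ (Fact \ref{nfcl-facts}(5)) gives nonforking over $M_0$, and the correspondence transfers this to $\nf^\ast$. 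By symmetry of $\nf^\ast$ this also yields left local character. Thus $\nf^\ast$ has the witness and local character properties, so by Theorem \ref{accessible-charact} it is accessible, and therefore stable.
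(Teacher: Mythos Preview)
Your proposal is correct and follows the same route as the paper's proof sketch: verify the elementary properties (invariance, symmetry, transitivity) directly, use cofinality for existence and uniqueness, and invoke Theorem \ref{accessible-charact} to reduce accessibility to the witness and local character properties, the latter handled via the LST axiom of $\K^\ast$. You have simply fleshed out the details the paper leaves implicit, in particular the type-correspondence between $\K$ and $\K^\ast$ and the use of coherence plus base monotonicity to upgrade the $\K$-model $M_0'$ to a $\K^\ast$-model $M_0$ in the local character step.
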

\begin{proof}[Proof sketch]
  We use Theorem \ref{accessible-charact}. It is straightforward to check that $\nf^\ast$ is an invariant, monotonic, symmetric, and transitive independence relation with the witness property. Existence and uniqueness follow from the corresponding properties of $\nf$ and the fact that $\K^\ast$ is cofinal in $\K$. To see local character, we use local character of $\nf$ together with the fact that $\K^\ast$ is a $\lambda$-AEC, hence satisfies the Löwenheim-Skolem-Tarski axiom.
\end{proof}

Restricting to a subclass of sufficiently homogeneous models, we can also get that any stable square is a pullback square (i.e.\ is disjoint over the base). The argument is similar to \cite[12.13]{indep-aec-apal} but we give a self-contained version here.

\begin{lem}\label{algebraic-nf-lem}
  Let $\nf$ be a stable independence relation on a $\mu$-AEC $\K$. Let $\lambda > \LS (\K)$ and let $M \lea N$ both be in $\K$ with $M$ $\lambda$-model-homogeneous. Let $p \in \gS (N)$ and assume that $p$ does not fork over $M_0$, with $M_0 \lea M$ and $|U M_0|^{<\mu} < \lambda$. Then $p$ is algebraic (i.e.\ realized inside its domain) if and only if $p \rest M$ is algebraic. 
\end{lem}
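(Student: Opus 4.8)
The plan is to handle both directions through the \emph{uniqueness of realizations} of algebraic Galois types, which is available because $\nf$ has existence and so $\K$ has amalgamation (hence Galois types are well-behaved and $\sim^\ast$ is already transitive). The key preliminary observation is: if a type $r \in \gS(N')$ is realized by some $b \in UN'$, then $b$ is its \emph{unique} realization, since any realization $a'$ satisfies $\gtp(a'/N';-) = \gtp(b/N';N')$ and, amalgamating over $N'$, the amalgamating maps agree on $N' \ni b$ and are monomorphisms, forcing $a' = b$. Write $p = \gtp(a/N; N^{+})$ with $a \in UN^{+}$. For the easy direction, suppose $p \rest M$ is algebraic, realized by some $b \in UM$. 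Since $a$ realizes $p \supseteq p \rest M$, it also realizes $p \rest M$, so by uniqueness $a = b \in UM \subseteq UN$; thus $p$ is realized in $N$, i.e.\ algebraic. This half uses neither nonforking nor model-homogeneity.

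For the converse, assume $p$ is algebraic, so $a \in UN$ is the unique realization of $p$; we must show $a \in UM$, which by the same uniqueness is equivalent to $p \rest M$ being algebraic. Since $p$ does not fork over $M_0$ and $M_0 \lea M \lea N$, base monotonicity (Fact~\ref{nfcl-facts}(5)) gives that $p$ does not fork over $M$, and then, as $a \in UN$, monotonicity (Fact~\ref{nfcl-facts}(3)) yields the self-independence statement $\nfcl{M}{a}{a}{N^{+}}$. Unfolding the definition of $\nfm$, there are $M \lea M_1, M_2 \lea M_3$ with $N^{+} \lea M_3$, with $a \in UM_1 \cap UM_2$, and with $\nfs{M}{M_1}{M_2}{M_3}$. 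The crux is to show that such an independent amalgam over $M$ is \emph{disjoint}, i.e.\ $UM_1 \cap UM_2 = UM$; granting this, $a \in UM_1 \cap UM_2 = UM$ and we are done. This is precisely the assertion that independent squares over a sufficiently saturated base are pullbacks (Theorem~\ref{indep-pullback}), whose specialization to a single self-independent element is the content of \cite[12.13]{indep-aec-apal}. The role of the numerical hypotheses is exactly to make $M$ ``saturated enough'': since $|UM_0|^{<\mu} < \lambda$ and $p$ does not fork over $M_0$, the Löwenheim–Skolem–Tarski axiom together with Fact~\ref{nfcl-facts}(1),(3) lets one take the relevant witnesses of size $< \lambda$ over a base $\lea M$ of size $< \lambda$, and the $\lambda$-model-homogeneity of $M$ then transports these configurations into $M$.

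The main obstacle is this disjointness step, and it is here that model-homogeneity is indispensable: in examples where $\nf$ carries built-in disjointness (sets, vector spaces, or the $\nf^{\ast}$ of graphs) no saturation is needed, whereas for, e.g., algebraically closed fields it is the algebraic closedness (saturation) of the base that forces $a$ into it. For a self-contained argument I would follow the successor step of \cite[4.8]{tame-frames-revisited-jsl} in combination with the local-character-along-independent-sequences Claim from the proof of Theorem~\ref{canon-thm}: assuming $a \notin UM$, so that $\gtp(a/M)$ is non-algebraic, one builds inside the $\lambda$-model-homogeneous $M$ a long $\nf$-independent sequence of realizations of $q := p \rest M_0$, applies the Claim (legitimate since $a$ is a single element with $\gtp(a/M)$ not forking over the small $M_0$) to extract a member of the sequence independent from $a$ over $M_0$, and then invokes uniqueness of nonforking extensions (Theorem~\ref{nfcl-uq}) to identify $\gtp(a/M)$ with the type of an element of $M$, contradicting $a \notin UM$ by uniqueness of realizations. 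Matching the cardinal bookkeeping---verifying that $\lambda > |UM_0|^{<\mu}$ is exactly what makes the small models embed into $M$ and the independent sequence long enough---is the delicate point, and is where I expect the real work to lie.
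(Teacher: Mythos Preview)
Your primary approach for the hard direction is circular: you invoke Theorem~\ref{indep-pullback} to obtain disjointness of the independent square, but in the paper Theorem~\ref{indep-pullback} is proven \emph{using} Lemma~\ref{algebraic-nf-lem} (its last line is an application of this very lemma). Even ignoring circularity, Theorem~\ref{indep-pullback} only asserts the existence of \emph{some} sufficiently large $\lambda$ (chosen via local character for $1$-types), not that disjointness holds for the arbitrary $\lambda$ appearing in the lemma's hypothesis; the lemma places no local-character constraint on $\lambda$.

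Your backup plan via independent sequences and the Claim from Theorem~\ref{canon-thm} is vague and far heavier than needed. The paper's argument is a direct two-line application of model-homogeneity plus uniqueness, and you have missed it entirely. Here it is: since $p$ is algebraic, pick $a \in UN$ realizing it; by L\"owenheim--Skolem--Tarski (using $|UM_0|^{<\mu} < \lambda$) find $N_0 \lea N$ with $M_0 \lea N_0$, $a \in UN_0$, and $|UN_0| < \lambda$; by $\lambda$-model-homogeneity of $M$ there is $f: N_0 \to M$ fixing $M_0$. Now $p \rest N_0$ does not fork over $M_0$ (monotonicity), hence $f(p \rest N_0)$ does not fork over $M_0$ and has the same restriction to $M_0$ as $p$, so by uniqueness (Theorem~\ref{nfcl-uq}) $f(p \rest N_0) = p \rest f[N_0]$. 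Since $a$ realizes $p \rest N_0$, $f(a) \in UM$ realizes $p \rest f[N_0]$, whence $p \rest M$ is algebraic. No disjointness of squares, no independent sequences---just transport a small witness into $M$ and let uniqueness identify the types.
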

\begin{proof}
  If $p \rest M$ is algebraic, then $p$ is clearly algebraic. Conversely, assume that $p$ is algebraic. Pick $a \in N$ realizing $p$ and let $N_0 \in \K_{<\lambda}$ be such that $M_0 \lea N_0 \lea N$ and $N_0$ contains $a$. Since $M$ is $\lambda$-model-homogeneous, there exists $f: N_0 \rightarrow M$ fixing $M_0$. Now by monotonicity $p \rest N_0$ does not fork over $M_0$, so $f (p \rest N_0)$ does not fork over $M_0$. Since $f$ fixes $M_0$, $p \rest M_0 = f (p \rest N_0) \rest M_0$, so by uniqueness $f (p \rest N_0) = p \rest f[N_0]$. Since $p \rest N_0$ is algebraic, this implies that $p \rest f[N_0]$ also is. Since $f[N_0] \lea M$, $p \rest M$ is algebraic, as desired.
\end{proof}

\begin{thm}\label{indep-pullback}
  Let $\nf$ be a stable independence relation on a $\mu$-AEC $\K$. There exists a regular cardinal $\lambda > \LS (\K)$ such that if $\nfs{M_0}{M_1}{M_2}{M_3}$ and $M_0$ is $\lambda$-model-homogeneous, then $M_1 \cap M_2 = M_0$. 
\end{thm}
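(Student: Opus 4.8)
The plan is to fix a regular cardinal $\lambda > \LS (\K)$ with $\lambda = \lambda^{<\mu}$ large enough that right local character (Definition \ref{lc-def}) holds for types of length $1$ with a witnessing base of some cardinality $\chi$ satisfying $\chi^{<\mu} < \lambda$; such a $\lambda$ exists since $\nf$ is stable and hence has local character by Theorem \ref{accessible-charact}. Given $\nfs{M_0}{M_1}{M_2}{M_3}$ with $M_0$ being $\lambda$-model-homogeneous, the inclusions $M_0 \lea M_1$ and $M_0 \lea M_2$ already give $U M_0 \subseteq U M_1 \cap U M_2$, so it suffices to show that every $a \in U M_1 \cap U M_2$ lies in $U M_0$. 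First I would set $p := \gtp (a / M_2; M_3)$. Because $a \in U M_1$ and $\nfs{M_0}{M_1}{M_2}{M_3}$, Fact \ref{nfcl-facts} gives $\nfcl{M_0}{a}{M_2}{M_3}$, i.e.\ $p$ does not fork over $M_0$; and since $a \in U M_2$, the type $p$ is algebraic.

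Next I would descend the base. Applying local character to $p \rest M_0 \in \gS^1 (M_0)$ yields $M_0' \lea M_0$ with $|U M_0'| \le \chi$ (so $|U M_0'|^{<\mu} < \lambda$) over which $p \rest M_0$ does not fork; by transitivity (Theorem \ref{nfcl-uq}) $p$ itself does not fork over $M_0'$. This is exactly the hypothesis of Lemma \ref{algebraic-nf-lem}, taking the lemma's homogeneous model to be $M_0$, its ambient model $M_2$, and its small base $M_0'$. Since $p$ is algebraic, the lemma gives that $p \rest M_0$ is algebraic, so there is $b \in U M_0$ realizing $p \rest M_0$; equivalently $\gtp (b / M_0; M_3) = \gtp (a / M_0; M_3)$.

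Finally I would identify $b$ with $a$. Since $b \in U M_0 \lea U M_2$, Lemma \ref{fd-rmk} gives $\nfs{M_0}{M_0}{M_2}{M_3}$ (the left leg is the identity), whence $\gtp (b / M_2; M_3)$ does not fork over $M_0$ by monotonicity. Now $\gtp (a / M_2; M_3)$ and $\gtp (b / M_2; M_3)$ both fail to fork over $M_0$ and agree on $M_0$, so uniqueness (Theorem \ref{nfcl-uq}) forces $\gtp (a / M_2; M_3) = \gtp (b / M_2; M_3)$. As $\nf$ has existence, $\K$ has amalgamation, so this equality is witnessed by maps $g, g' : M_3 \to N$ fixing $M_2$ with $g (a) = g' (b)$; but $b \in U M_2$ is fixed and $g$ is injective, forcing $a = b \in U M_0$.

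I expect the last paragraph to be the crux: Lemma \ref{algebraic-nf-lem} only transfers \emph{algebraicity} of the type down to $M_0$, producing \emph{some} realization $b \in U M_0$ of $p \rest M_0$ rather than the element $a$ we care about. The real work is recognizing that uniqueness of nonforking extensions pins down $\gtp (a / M_2; M_3)$ from its restriction to $M_0$, and that the type of a parameter $b \in U M_2$ over $M_2$ can only be realized by $b$ itself. The remaining steps---the choice of $\lambda$ and the descent of the base via local character and transitivity---should be routine given the machinery already developed for $\nfm$.
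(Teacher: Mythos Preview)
Your proof is correct and follows essentially the same route as the paper's: choose $\lambda$ large enough for local character of length-one types, set $p := \gtp(a/M_2;M_3)$, descend the base to a small $M_0' \lea M_0$ via local character and transitivity, and invoke Lemma \ref{algebraic-nf-lem} to conclude $p \rest M_0$ is algebraic.

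The only difference is your final paragraph. The paper simply writes ``$p \rest M_0$ is algebraic, so $a \in M_0$'' and stops, whereas you spell out the passage from ``some $b \in U M_0$ realizes $p \rest M_0$'' to ``$a = b$'' via uniqueness of nonforking extensions and the observation that both $a$ and $b$ lie in $M_2$ and are therefore fixed by the amalgamating maps. Your explicit argument is correct (and your use of amalgamation to get a one-step witness for type equality is legitimate, since existence gives amalgamation). So what you flagged as the crux is in fact a detail the authors regarded as routine; your version is more complete on this point, not different in strategy.
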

\begin{proof}
  By local character, we can pick $\lambda > \LS (\K)$ regular so that $\lambda_0^{<\mu} < \lambda$ for all $\lambda_0 < \lambda$ and moreover any type of length one does not fork over a model of cardinality strictly less than $\lambda$. Now assume that $\nfs{M_0}{M_1}{M_2}{M_3}$ and $M_0$ is $\lambda$-model-homogeneous. Let $a \in M_1 \cap M_2$. We show that $a \in M_0$. Let $p := \gtp (a / M_2; M_3)$. Note that $p$ is algebraic. By how $\lambda$ was chosen, there exists $M_0' \lea M_0$ such that $M_0' \in \K_{<\lambda}$ and $p \rest M_0$ does not fork over $M_0'$. By transitivity, $p$ does not fork over $M_0'$. By Lemma \ref{algebraic-nf-lem} (where $M_0, M, N$ there stand for $M_0', M_0, M_2$ here), $p \rest M_0$ is algebraic, so $a \in M_0$, as desired.
\end{proof}

\bibliographystyle{amsalpha}
\bibliography{indep-categ}

\end{document}